\documentclass{amsart}

\usepackage{amsmath}
\usepackage{amssymb}
\usepackage{hyperref}
\usepackage{cases}
\usepackage{cleveref}
\usepackage{xcolor}

\newtheorem{theorem}{Theorem}[section]
\newtheorem{prop}[theorem]{Proposition}
\newtheorem{lemma}[theorem]{Lemma}
\newtheorem{cor}[theorem]{Corollary}

\theoremstyle{definition}

\theoremstyle{remark}
\newtheorem{remark}[theorem]{Remark}

\DeclareMathOperator\supp{supp}

\DeclareMathOperator\tr{tr}
\def\ep{\varepsilon}    %%    then \ep gives the nice-looking epsilon

\newcommand{\R}{\mathbb{R}}

\begin{document}

\title{Extrinsic bi-Conformal Heat Flow and its smoothness}

\author{Woongbae Park}
%\address{301 Thackeray Hall, Department of Mathematics, University of Pittsburgh, Pittsburgh, Pennsylvania, 15260}
%\email{wop5@pitt.edu}

\subjclass[2020]{Primary 58E20, 53E99, 35K91}

\keywords{biharmonic map flow, bi-CHF, global existence, smooth solution}

\begin{abstract}
In this paper we introduce conformal heat flow of (extrinsic) biharmonic maps on $4$-manifold, simply called bi-conformal heat flow (bi-CHF), and study its properties.
Similar to other CHF of harmonic maps and regularized $n$-harmonic maps, (CHF and regularized $n$-CHF respectively), we obtain global smoothness and no finite time singularity.
\end{abstract}

\maketitle
\sloppy

%%%%%%%%%%%%%%%%%%%%%%%%%%%%%%%%%%%%%%%%%%%%%%%%%%%%%%%%%%%%%%%%%%%%%%%%%%%%%
\section{Introduction}
\label{sec1}

Let $(M,g)$ and $(N,h)$ be two Riemannian manifolds with $\dim M = 4$.
The (extrinsic) biharmonic map flow is a gradient flow of (extrinsic) bienergy
\begin{equation}
E(f) = \frac{1}{2}\int_{M} |\Delta f|^2_g dvol_g.
\end{equation}
%It is well known that the bi-energy is conformally invariant in dimension $4$.

Its Euler-Lagrange equation becomes
\begin{equation}
 -\Delta^2 f + \Delta (A(df,df)) - \langle \Delta f, \Delta P \rangle + 2 \nabla \langle \Delta f, \nabla P \rangle = 0
\end{equation}
where $A$ is the second fundamental form of the embedding $(N,h) \hookrightarrow \mathbb{R}^L$ and $P$ is the orthogonal projection from $\mathbb{R}^L$ onto $T_f N$.
For simplicity we denote
\begin{equation} \label{B perp}
B =  \Delta (A(df,df)) - \langle \Delta f, \Delta P \rangle + 2 \nabla \langle \Delta f, \nabla P \rangle \perp T_f N.
\end{equation}
Hence the (extrinsic) biharmonic map equation becomes
\[
-\Delta^2 f \perp T_f N.
\]
%Using orthonormal frame $\nu^k$ for the normal space of $N \hookrightarrow \mathbb{R}^L$, each term in \eqref{B perp} can be rewritten as
%\[
%\begin{split}
%\Delta (A(df,df)) =& \Delta \langle df, d\nu^k(df) \rangle \nu^k\\
%\langle \Delta f, \Delta P \rangle =& \langle \Delta f, \Div (d\nu^k(df)) \rangle \nu^k + \langle \Delta f, d\nu^k (df) \rangle d\nu^k(df)\\
%\langle \nabla \Delta f, \nabla P \rangle =& \langle \nabla \Delta f, d\nu^k(df) \rangle \nu^k.
%\end{split}
%\]
Corresponding gradient flow equation becomes
\begin{equation}
f_t = -\Delta^2 f + \Delta (A(df,df)) - \langle \Delta f, \Delta P \rangle + 2 \nabla \langle  \Delta f, \nabla P \rangle.
\end{equation}

%                  biharmonic map flow also forms bubbling in finite time!!
%\bigskip
%History of research about biharmonic maps and their flows.

Biharmonic map is considered as an extension of harmonic maps, as the (extrinsic) bienergy can be considered as a higher dimensional generalization of Dirichlet energy
\begin{equation}
E_1(f) = \frac{1}{2} \int_{M} |df|^2 dvol_g.
\end{equation}
Note that there is another version of generalization, called intrinsic bienergy, defined by
\begin{equation}
E_2(f) = \frac{1}{2} \int_{M} |\tau(f)|^2 dvol_g
\end{equation}
where $\tau(f) = \tau_g (f) \in T_f N$ is the tension field of $f$.
Its critical point is called (intrinsic) biharmonic map.
%, whose Euler-Lagrange equation is given by
%\begin{equation}
%\begin{split}
%-\Delta^2 f + \Delta (A(df,df)) - \langle \Delta f, \Delta P \rangle + 2 \nabla \langle \Delta f, \nabla P \rangle &\\
%+ P (A(df,df) \cdot DA(df,df)) + 2 A(df,df) A(df, \nabla P) & = 0.
%\end{split}
%\end{equation}
Here we note that 
%(intrinsic or extrinsic) biharmonic maps can be written in local coordinates, or using the embedding $(N,h) \hookrightarrow \mathbb{R}^L$.
%Because extrinsic biharmonic maps depend on the embedding, many literatures about extrinsic biharmonic maps use a fixed embedding.
extrinsic biharmonic map depends on the embedding, whereas intrinsic biharmonic map does not.

%On the other hand, intrinsic biharmonic maps does not depend on the embedding, hence the equation for intrinsic biharmonic maps is described in terms of curvature tensor in many cases.
There are many results on biharmonic map including Jiang \cite{J86}, Chang-Wang-Yang \cite{CWY99}, Wang \cite{Wang04a}, \cite{Wang04b}, \cite{Wang04c}, Lamm-Rivi{\`e}re \cite{LammR08}, Hornung-Moser \cite{HM12}, Laurain-Rivi{\`e}re \cite{LaurainR13}, Fardoun-Saliba \cite{FS21}, and many others.
Due to the similar structure of being critical point of energy functional, biharmonic maps shares similar properties with harmonic map and $n$-harmonic map.
And such analogues hold for heat flow of corresponding elliptic systems.
Biharmonic heat flow was widely studied by many authors, including Lamm \cite{Lamm04}, \cite{Lamm05}, Moser \cite{M06}, Wang \cite{Wang07}, Hineman-Huang-Wang \cite{HHW14}, Liu-Yin \cite{LY15}, Laurain-Lin \cite{LL21}, and many others.
Among them, Wang \cite{Wang07} showed existence of global weak solution of extrinsic biharmonic map flow, while finite time singularities were obtained by Liu-Yin \cite{LY15}.
Under further condition is assumed, global smoothness was also obtained, with non-positive curvature of the target manifold by Lamm \cite{Lamm05} and with small initial energy by Lamm \cite{Lamm04}.

There was a new approach to obtain global smooth solution by varying domain metric in a suitable way.
Park \cite{P22} studied a variation of harmonic map flow by combining the flow with the evolution of the metric in conformal direction, called the conformal heat flow of harmonic maps (or simply, conformal heat flow or CHF) given by
\begin{equation}
\begin{cases}
f_t =& \tau_g (f)\\
u_t =& b |df|_g^2 - a
\end{cases}
\end{equation}
where $a,b$ are positive constants with $b$ large enough, $\tau_g = \tr_g (\nabla^g df)$ is the tension field of $f$ with respect to $g$ and $g = g(x,t) = e^{2u(x,t)}g_0(x)$ is the time-dependent metric of $M$ with conformal factor $u$.
The evolution equation of the conformal factor $u$ is designed to postpone any finite time bubbling.
Thanks to the conformal invariance of the energy, the CHF enjoys many properties similar to the harmonic map flow, including $\ep$-regularity and energy decreasing.
And no finite time bubbling occurs in CHF was shown recently in \cite{P23}.
This idea is robust in the sense that it can be applied to another conformally invariant energy concentrating geometric flow, (regularized) $n$-harmonic map flows, as shown in \cite{P25}.

In this regard, we seek for a similar variation of biharmonic map flow to overcome finite time singularity.
Unlike harmonic map equation, biharmonic map equation is not fully conformally invariant - it is invariant only under constant dilation.
%Nevertheless, we set up the evolution equation in the similar manner.
%Denote $g = e^{2u}g_0$ is the time-dependent metric with $u=u(x,t) : M \times [0,T] \to \mathbb{R}$ and $g_0$ is the initial time-independent metric.
Therefore, we do not set up the evolution equation in terms of time-dependent metric $g = e^{2u}g_0$ which is complicated.
Instead, we set up the equation in a simpler version, only multiplied by the conformal factor $e^{-4u}$.
Denote $g_0$ the time-independent metric of $M$ and $u = u(x,t) : M \times [0,T] \to \mathbb{R}$ be a conformal factor.
Consider a pair of equations
\begin{equation} \label{eq1} %\tag{bi-CHF}
\begin{cases}
f_t &= e^{-4u} \left( -\Delta^2 f + \Delta (A(df,df)) - \langle \Delta f, \Delta P \rangle + 2 \nabla \langle \Delta f, \nabla P \rangle \right)\\
u_t &= b e^{-4u} (|\nabla df|^2 + |df|^4) - a
\end{cases}
\end{equation}
with respect to $g_0$ with initial condition $f(0) = f_0 : M \to N \hookrightarrow \mathbb{R}^L$ and $u(0)=0$.
Call this equation by (extrinsic) bi-CHF.
We sometimes use the modified version instead, where we multiply both sides by $e^{4u}$.

\begin{remark}
In CHF case, the equation of $u$ looks like $u_t = b e_g(f) - a$ where $e_g(f)$ is the energy density with respect to $g$ from the energy $E(f) = \int e_g(f) dvol_g$.
This suggests that for bi-CHF case, the equation of $u$ is supposed to be $u_t = b |\Delta_g f|^2 - a$ or $e^{4u} u_t = b |\Delta f|^2 - ae^{4u}$.
But due to the lack of local control of $|df|^4$ and $|\nabla df|^2$, we change the energy density term in the equation of $u$ so that the metric react suitably not where $|\Delta f|^2$ is concentrated, but where $|\nabla df|^2 + |df|^4$ is concentrated.
\end{remark}

\begin{remark}
While we do not have enough a priori control of $|\nabla df|^2$ and $|df|^4$ locally, we have global control of them.
First, by integration by parts, $\int_M |\nabla df|^2 = \int_{M} |\Delta f|^2$ as usual.
Since we assume $N$ be a compact Riemannian manifold and $f : M \to N$, we can assume $\|f\|_{L^\infty} < \infty$.
In this regard, and with Sobolev embedding and elliptic regularity, we have
\begin{equation} \label{df4 bdd}
\int_{M} |df|^4 \leq C \left( \int_{M} |\Delta f|^2 + |f|^2 \right)^2 \leq C_1 \left( \int_{M} |\Delta f|^2 \right)^2 + C_2.
\end{equation}
Hence, $\int_M |df|^4$ is controlled by the energy $E(f)$.
\end{remark}

The main theorem of this paper is the following.
\begin{theorem} \label{main 1}
Let $f_0 \in W^{6,2}(M,N)$.
Then there exists a smooth solution $(f,u)$ of \eqref{eq1} on $M \times [0,\infty)$ with initial condition $f(0)=f_0, u(0)=0$.
\end{theorem}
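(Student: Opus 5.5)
Assuming $f_0\in W^{6,2}$ is compatible with the conformal heat flow picture in \cite{P22,P23,P25}, I would argue in three stages: short-time existence, a priori estimates, and a continuation argument showing that no singularity can form at any finite time $T$.

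\emph{Short-time existence.} The $u$-equation is an ODE in $t$ at each $x$: $u(x,t)=\int_0^t\bigl(be^{-4u}(|\nabla df|^2+|df|^4)-a\bigr)\,ds$. For $f$ fixed in $C^0([0,T];W^{6,2})\subset C^0([0,T];C^{2,\alpha})$, this equation determines $u$ with the same spatial regularity as $|\nabla df|^2$. Conversely, for $u$ fixed and bounded in $C^{0}([0,T];W^{4,2})$, the $f$-equation is a fourth order quasilinear parabolic system with leading operator $-e^{-4u}\Delta^2$, which is uniformly parabolic. So I would run a contraction, or Schauder-type fixed point, in a space such as $f\in L^2_tW^{8,2}\cap H^1_tW^{4,2}$ paired with $u\in C^0_tW^{4,2}$, as in Lamm's treatment \cite{Lamm04}. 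Parabolic bootstrapping, together with differentiating the $u$-equation, then gives smoothness for $t>0$.

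\emph{A priori estimates.} Multiplying the $f$-equation by $e^{4u}f_t$ and integrating, the standard computation gives
\[
\int_0^T\!\!\int_M e^{4u}|f_t|^2+E(f(T))=E(f_0),
\]
since $f_t\in T_fN$ kills the normal term $B$. Hence $E$ is nonincreasing, and by \eqref{df4 bdd}, $\int_M(|\nabla df|^2+|df|^4)\le C$ uniformly in time. The $u$-equation gives $u_t\ge -a$, so $u\ge -at$. Also $(e^{4u})_t=4b(|\nabla df|^2+|df|^4)-4ae^{4u}$, which yields
\[
\int_M e^{4u(t)}\le C(1+t).
\]
Thus $e^{4u}$ is bounded in $L^1$ and bounded below on $[0,T]$. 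This is the analogue of the volume control in CHF.

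\emph{Ruling out finite-time singularity.} This is the main obstacle. Suppose the solution fails to extend past $T<\infty$. Then a local $\ep$-regularity statement (to be proved using $e^{-4u}\le e^{4aT}$ and the local energy inequality) forces concentration: some $x_0$ and $r_k\to0$ with
\[
\int_{B_{r_k}(x_0)}\bigl(|\nabla df|^2+|df|^4\bigr)(t_k)\ge\ep_0 .
\]
The feedback in $u$ should prevent this. Wherever $|\nabla df|^2+|df|^4$ concentrates, $e^{4u}$ grows like $\int 4b(|\nabla df|^2+|df|^4)$, which slows $f_t=e^{-4u}(\cdots)$ there. I would follow \cite{P23}. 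First rescale time locally by $e^{4u}$ and show that near a concentration point, $\int_{t_k-\delta}^{t_k}\int_{B_r}(|\nabla df|^2+|df|^4)$ must be large. Then integrate the $u$-equation to get $e^{4u}\gtrsim b\cdot(\text{concentrated energy})/r^4$ on the bubble region. With $b$ large, this makes the parabolic time needed to form a bubble, measured in the rescaled time $e^{-4u}\,dt$, exceed the available $\int_0^T e^{-4u}dt\le$ something finite. That gives a contradiction.

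The delicate part will be the lack of a pointwise (rather than integrated) relation between $|\Delta f|^2$, which is what is controlled locally, and $|\nabla df|^2+|df|^4$. I would handle it with a localized Bochner or integration-by-parts identity on balls, absorbing cutoff errors via the global bound \eqref{df4 bdd}. Once concentration is excluded, uniform higher-order bounds on $[0,T)$ follow by standard fourth order parabolic estimates, and the flow extends, giving a smooth solution on $[0,\infty)$.
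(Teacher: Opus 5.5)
Your first two stages are sound and match the paper: a fixed-point construction for short time, then the energy identity, $u\ge -at$, and the volume bound. The gap is in the third stage, which is the actual content of the theorem. The ``feedback'' argument there is a heuristic, and the steps you list do not hold as stated.

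\begin{itemize}
\item The claim that $\int_{t_k-\delta}^{t_k}\int_{B_r}(|\nabla df|^2+|df|^4)$ must be large is unsupported. Energy monotonicity and \eqref{df4 bdd} show this quantity is at most $C\delta$.
\item $e^{4u}$ grows on the bubble region only in proportion to how long a density of size $\varepsilon_0 r^{-4}$ persists there. Nothing you have proved bounds that persistence time from below.
\item A bubble at scale $r$ forms on the natural time scale $e^{4u}r^4$. So your heuristic merely balances the two effects at the same scale; without a quantitative estimate there is no contradiction.
\item Finiteness of $\int_0^T e^{-4u}\,dt$ is no obstruction by itself. With $u\equiv 0$ the available time is also finite, and the unweighted extrinsic biharmonic flow does develop finite-time singularities \cite{LY15}.
\item Since $u$ depends on $x$, ``rescaling time by $e^{4u}$'' does not reduce the problem to a standard flow near $x_0$.
\end{itemize}

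The missing idea is the paper's weighted time-derivative estimate, \Cref{Der p=0}. Differentiate the equation in $t$ and test with $f_t\varphi^4$. Using the $u$-equation, the left side becomes
\[
\tfrac12\tfrac{d}{dt}\int e^{4u}|f_t|^2\varphi^4+2b\int|f_t|^2(|\nabla df|^2+|df|^4)\varphi^4-2a\int e^{4u}|f_t|^2\varphi^4 .
\]
For $b$ large, the middle term absorbs the nonlinear error terms from $\partial_t B$. First you integrate by parts so that the $|df|^2|df_t|^2$ terms become $|f_t|^2(|\nabla df|^2+|df|^4)$ plus $\delta|\Delta f_t|^2$. Combine this with $\int_0^T\!\int_M e^{4u}|f_t|^2\le E(0)$ and the global bound $\sup_t\int_M e^{4u}|f_t|^2\le C$. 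The result is $\int_0^T\!\int_{B_R}|\Delta f_t|^2\varphi^4\le C(T,R,E(0))$. Hence $\Theta_r(t)=\int_{B_r}|\Delta f|^2\varphi^4$ is $\tfrac12$-H\"older in $t$ uniformly in $r$, and no energy can concentrate as $t\nearrow T$.

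The same estimate gives short-time local smallness of $\int_{B_R}|\Delta f|^2$ (\Cref{loc E finer}). The paper feeds this into a hole-filling iteration to get decay of $\int_{B_r}|df|^4$ and $\int_{B_r}|\nabla df|^2$. A localized Bochner identity plus \eqref{df4 bdd} is not enough here, because the cutoff error $\int|df|^2|\nabla\varphi|^2$ is scale invariant and not small.

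Your $\varepsilon$-regularity step is also underpowered. The bound $e^{-4u}\le e^{4aT}$ controls the coefficient of $\Delta^2$ only from above. Uniform parabolicity of $\partial_t+e^{-4u}\Delta^2$ needs $u$ bounded \emph{above}. The paper gets that only through a Gronwall bootstrap that couples $\int_{B_r}e^{8u}$, $\int_{B_r}e^{12u}$, $\int_{B_r}e^{16u}$ with higher-order estimates on $f$, ending in \Cref{sup u}.
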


Throughout this paper, we assume $(N,h) \hookrightarrow \R^L$ isometrically, and there is a constant $C_N$ such that
$\|A\|, \|DA\|, \|D^2 A\|, \|P\|, \|\nabla P\|, \|\nabla^2 P\| \leq C_N$.
%$\|{}^N\!\!R\|, \|D {}^N\!\!R\|,\|A\|, \|DA\| \leq C_N$ where ${}^N\!\!R$ is the Riemannian curvature tensor of $N$.
%Also assume $b$ is large enough so that
%\begin{equation}
%C_b := 4b - C_1 > 0.
%\end{equation}
%Also, we assume all the computations are made with respect to the metric $g_0$ unless the metric is specified.

%%%%%%%%%%%%%%%%%%%%%%%%%%%%%%%%%%%%%%%%%%%%%%%%%%%%%%%%%%%%%%%%%%%%%%%%%%%%%
\section{Preliminary}
\label{sec2}

In this section we recall some useful facts that are needed in the later sections.
In many cases, the computations require so-called commutator identities.
These are standard and can be found in many literatures, for example, see Lamm and Muller.

Let $B$ be a $(p,q)$ tensor on $M$, that is, a smooth section of $(T^\ast M)^{\otimes p} \otimes (TM)^{\otimes q}$.
Let $(x^1, \cdots, x^4)$ be a local coordinate in $U \subset M$.
Then we have
\begin{equation} \label{com 1}
[\nabla_i, \nabla_j] B^{k_1 \ldots k_q}_{\ell_1 \ldots \ell_p} = \sum_{r=1}^{q} R_{ijm}^{k_r} B^{k_1 \ldots k_{r-1} m k_{r+1} \ldots k_q}_{\ell_1 \ldots \ell_p} + \sum_{s=1}^{p} R_{ij\ell_s m} B^{k_1 \ldots k_q}_{\ell_1 \ldots \ell_{s-1} m \ell_{s+1} \ldots \ell_p}
\end{equation}
where $R$ is the Riemmian curvature tensor of $M$.

We can extend it along a map $f : M \to N$.
Together with the embedding $N \hookrightarrow \mathbb{R}^L$, we can only consider the map $f : M \to \mathbb{R}^L$.
Levi-Civita connection on $M$ and $\mathbb{R}^L$ can induce connections on $f^\ast TN \hookrightarrow f^\ast T \mathbb{R}^L$ and higher tensor powers, and above commutator identities still hold accordingly.

%We can extend it along a map $f : M \to N$.
%Suppose we have Levi-Civita connections on $M$ and $N$.
%It can induce connections on $f^\ast TN$ and higher tensor powers as well.
%Now let $x^i$ be a local coordinate on $M$ with $i=1, \ldots, 4 = \dim M$, $y^\mu$ be a local coordinate on $N$ with $\mu = 1, \ldots, n = \dim N$.
%Note that $f_\ast \frac{\partial}{\partial x^i} = \nabla_i f^\mu \frac{\partial}{\partial y^\mu}$.

Now let $x^i$ be a local coordinate on $M$ with $i=1, \ldots, 4$ and $\alpha = 1, \ldots, L$.
For simplicity, we denote $\nabla_i f^\alpha = f^\alpha_i, \frac{\partial}{\partial t} f^\alpha = f^\alpha_t$.
%Then \eqref{com 1} extends to mixed tensors using mixed curvature tensor, for example,
%\[
%R_{ij \mu \lambda} = \langle R (\frac{\partial}{\partial x^i},\frac{\partial}{\partial x^j}) f^\ast \frac{\partial}{\partial y^\mu}, f^\ast \frac{\partial}{\partial y^\lambda} \rangle = {}^N \!\! R_{\alpha \beta \mu \lambda} \nabla_i f^\alpha \nabla_j f^\beta
%\]
%where we denote ${}^N \!\! R $ for the Riemannian curvature tensor for $N$.
Here we illustrate a few identities commonly used later.
\begin{equation} \label{com 2}
\nabla_i \nabla_j \nabla_k f^\alpha = \nabla_j \nabla_i \nabla_k f^\alpha + R_{ijk \ell}  f^\alpha_\ell .
%+ {}^N \!\! R_{\mu \nu \beta}^{\alpha} f^\mu_i f^\nu_j  f^\beta_k.
\end{equation}
On $M \times [0,T]$, $\frac{\partial}{\partial t}$ induces $\nabla_t$ in usual sense over all bundles on $M \times [0,T]$.
Then in particular, we have
\begin{align}
\nabla_i \nabla_j  f^\alpha_t =& \nabla_j \nabla_i  f^\alpha_t
% + {}^N\!\! R^{\alpha}_{\mu \nu \beta}  f^\mu_i  f^\nu_j  f^\beta_t
 \label{com 3}\\
\nabla_t \left( \nabla_i \nabla_j f^\alpha \right) =& \nabla_j \nabla_i  f^\alpha_t 
%+ {}^N \!\! R^\alpha_{\mu \nu \beta} f^\mu_t  f^\nu_j f^\beta_i
 \label{com 4}\\
\nabla_i \nabla_j \nabla_k f^\alpha_t =& \nabla_j \nabla_i \nabla_k f^\alpha_t + R_{ij k \ell}\nabla_\ell  f^\alpha_t .
%+ {}^N \!\! R^{\alpha}_{\mu \nu \beta}   f^\mu_i  f^\nu_j \nabla_k f^\beta_t 
\label{com 5}
%\begin{split}
%\nabla_i \nabla_j \nabla_k f^\alpha_t =& \nabla_i \nabla_k \nabla_j  f^\alpha_t 
%%+ \nabla_i \left( {}^N \!\! R^{\alpha}_{\mu \nu \beta}  f^\mu_j f^\nu_k f^\beta_t \right)
%\\
%=& \nabla_i \nabla_k \nabla_j  f^\alpha_t + {}^N\!\! R^{\alpha}_{\mu \nu \beta, \gamma}  f^\gamma_i  f^\mu_j f^\nu_k f^\beta_t \\
%&+ {}^N\!\! R^{\alpha}_{\mu \nu \beta} \left( \nabla_i \nabla_j f^\mu f^\nu_k  f^\beta_t + f^\mu_j \nabla_i \nabla_k f^\nu  f^\beta_t + f^\mu_j f^\nu_k \nabla_i f^\beta_t \right) \label{com 6}.
%\end{split}
\end{align}

\begin{remark}
Above identities are true under the embedding $(N,h) \hookrightarrow \mathbb{R}^L$. 
Without embedding, the commutator identity will produce extra curvature terms in $N$.
Let $\dim N = n$ and choose a local coordinate $\{f^{\tilde{\alpha}}\}$ of $N$ with $\tilde{\alpha} = 1, \ldots, n$.
Then, for example,
\[
\nabla_t (\nabla_i \nabla_j f^{\tilde{\alpha}}) = \nabla_i \nabla_j f^{\tilde{\alpha}}_t + {}^N\!\! R^{\tilde{\alpha}}_{\tilde{\mu} \tilde{\nu} \tilde{\beta}} f^{\tilde{\mu}}_t f^{\tilde{\nu}}_i f^{\tilde{\beta}}_j
\]
where ${}^N\!\!R$ is the Riemannian curvature tensor of $N$.

Hence, under the embedding $(N,h) \hookrightarrow \mathbb{R}^L$, $\nabla_t$ commutes with other derivatives.
\end{remark}

Next, we provide simple observations about bi-CHF.

\begin{lemma} \label{E dec}
(Energy decreasing)
Let $(f,u)$ be a smooth solution of \eqref{eq1} on $M \times [0,T)$.
Then
\begin{equation}
\frac{d}{dt} E(f(t)) = -\int_{M} e^{4u} |f_t|^2 \leq 0.
\end{equation}
In particular, the energy is non-increasing and $E(t) \to E_\infty$ as $t \to \infty$.
\end{lemma}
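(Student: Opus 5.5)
We differentiate $E(f(t)) = \frac12\int_M |\Delta f|^2\,dvol_{g_0}$ in time and use the $f$-equation of \eqref{eq1}. Since $g_0$ is time-independent and $\nabla_t$ commutes with spatial derivatives under the embedding (see \eqref{com 3}, \eqref{com 4}), we have $\frac{d}{dt}E = \int_M \langle \Delta f, \Delta f_t\rangle$. On the closed manifold $M$ we integrate by parts twice to get
\[
\frac{d}{dt}E(f(t)) = \int_M \langle \Delta^2 f, f_t\rangle .
\]

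Next we use the geometric structure of $f_t$. Because $f(t)$ maps into $N$, $f_t(x,t)\in T_{f(x,t)}N$, so $Pf_t = f_t$. By \eqref{B perp}, the lower-order term $B$ is perpendicular to $T_fN$. Hence $\langle B, f_t\rangle = 0$ pointwise, and
\[
\int_M \langle \Delta^2 f, f_t\rangle = -\int_M \langle -\Delta^2 f + B, f_t\rangle .
\]
The $u$-equation gives $-\Delta^2 f + B = e^{4u} f_t$, so
\[
\frac{d}{dt}E = -\int_M e^{4u}|f_t|^2 \le 0 .
\]

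The main point needing care is the normality claim $B\perp T_fN$, which is exactly what makes $\Delta^2 f$ and $-\Delta^2 f + B$ have the same tangential part. One checks it as in the standard biharmonic literature (e.g.\ Wang \cite{Wang04a}, Lamm \cite{Lamm04}). Differentiating $P(f)\nabla f = \nabla f$ twice gives
\[
P\Delta^2 f = \Delta^2 f - \Delta(A(df,df)) + \langle \Delta f,\Delta P\rangle - 2\nabla\langle \Delta f,\nabla P\rangle
\]
up to tangential-projection identities. This is equivalent to $(I-P)\Delta^2 f = B$, i.e.\ $-\Delta^2 f + B = -P\Delta^2 f$. The paper states this relation in \eqref{B perp}, so we take it as given.

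Finally, $E \ge 0$ and is non-increasing, so by monotone convergence $E(t)\to E_\infty\ge 0$ as $t\to\infty$. If the solution exists only on $[0,T)$, the limit statement refers to $t\to T$, or to $t\to\infty$ once global existence is established.
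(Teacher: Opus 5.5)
Your proof is correct and is essentially the paper's argument: differentiate $E$ in time, integrate by parts twice to get $\int_M \langle f_t, \Delta^2 f\rangle$, then use $e^{4u}f_t = -\Delta^2 f + B$ together with $B \perp T_fN$ and $f_t \in T_fN$. You add justification the paper leaves implicit (the identity $B=(I-P)\Delta^2 f$ and the limit being taken as $t\to T$ on a finite interval); the only slip is that $e^{4u}f_t = -\Delta^2 f + B$ comes from the $f$-equation multiplied by $e^{4u}$, not from the $u$-equation.
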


\begin{proof}
By direct computation,
% and using \eqref{com  4},
and using the equation $e^{4u} f_t + \Delta^2 f = B \perp T_f N$,
\[
\frac{d}{dt} E(f(t)) = \int_{M} \langle \partial_t \Delta f, \Delta f \rangle = \int_{M} \langle \Delta f_t, \Delta f \rangle = \int_{M} \langle f_t, \Delta^2 f \rangle = -\int_{M} e^{4u} |f_t|^2 \leq 0.
\]
\end{proof}

\begin{lemma}
Let $(f,u)$ be a smooth solution of \eqref{eq1} on $M \times [0,T)$.
Then
\begin{equation} \label{e 4u}
e^{4u} = e^{-4at} \left( 1 + 4b \int_{0}^{t} e^{4as} (|\nabla df|^2(s) + |df|^4(s)) ds \right).
\end{equation}
Hence the volume $V(t) = \int_{M} dvol_g = \int_{M} e^{4u}$ satisfies
\begin{equation}
V(t) \leq e^{-4at} V(0) + \frac{b}{a} \left( 2 E(0) + 4C_1 E(0)^2 + C_2 \right).
\end{equation}
\end{lemma}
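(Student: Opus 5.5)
The plan is to solve the $u$-equation explicitly. At each fixed $x \in M$ it is a linear ODE in $w = e^{4u}$. Multiplying $u_t = b e^{-4u}(|\nabla df|^2 + |df|^4) - a$ by $4e^{4u}$ gives
\[
\partial_t e^{4u} = 4b(|\nabla df|^2 + |df|^4) - 4a e^{4u},
\]
so that
\[
\partial_t\bigl(e^{4at} e^{4u}\bigr) = 4b e^{4at}(|\nabla df|^2 + |df|^4).
\]
Integrating from $0$ to $t$ and using $u(0)=0$, so that $e^{4u(0)}=1$, yields \eqref{e 4u} directly. Smoothness of $(f,u)$ justifies these pointwise manipulations.

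For the volume bound, I will integrate \eqref{e 4u} over $M$ and use Fubini to get
\[
V(t) = e^{-4at} V(0) + 4b \int_0^t e^{-4a(t-s)} \int_M \bigl(|\nabla df|^2 + |df|^4\bigr)(s)\, ds.
\]
The spatial integral is then controlled by the energy, uniformly in $s$. By integration by parts (the Bochner/commutator identity, with the Ricci term on $M$ absorbed or ignored as the paper does), $\int_M |\nabla df|^2 = \int_M |\Delta f|^2 = 2E(f(s))$. By \eqref{df4 bdd},
\[
\int_M |df|^4 \le C_1 (2E(s))^2 + C_2 .
\]
I will then invoke Lemma \ref{E dec} to replace $E(s)$ by $E(0)$. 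This gives the bound $2E(0) + 4C_1E(0)^2 + C_2$, with the constants written to match the statement (the factor conventions on $E$ squared get absorbed into $C_1$).

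Finally, $4b\int_0^t e^{-4a(t-s)}\,ds = \frac{b}{a}(1-e^{-4at}) \le \frac{b}{a}$, which gives the claimed inequality. The only delicate point is the identity $\int|\nabla df|^2 = \int|\Delta f|^2$. On a curved $M$ this holds only up to a Ricci term $\int \mathrm{Ric}(df,df)$, which is bounded by $C\int|df|^2 \le C(E+1)$ via the same Sobolev/elliptic estimate. I expect this harmlessly enlarges the constants, consistent with the paper's preceding Remark, which treats the identity as exact.
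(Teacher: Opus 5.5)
Your proposal is correct and follows the paper's proof. You solve the linear ODE for $e^{4u}$ pointwise, integrate over $M$, bound $\int_M(|\nabla df|^2+|df|^4)$ uniformly in time using $\int_M|\nabla df|^2=\int_M|\Delta f|^2$, \eqref{df4 bdd} and Lemma~\ref{E dec}, and then use $4b\int_0^t e^{-4a(t-s)}\,ds\le b/a$. Your caveat about the Ricci term in $\int_M|\nabla df|^2$ is a fair point that the paper glosses over, and it is harmless. Note also that $C_1(2E)^2=4C_1E^2$ exactly, so no absorption is needed for that factor.
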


\begin{proof}
The second equation of \eqref{eq1} can be solved directly to get \eqref{e 4u}.
Using above lemma and equation \eqref{df4 bdd}, Integrating over $M$ will give
\[
\begin{split}
V(t) =&  e^{-4at} V(0) + 4b e^{-4at} \int_{0}^{t} e^{4as} \int_{M} (|\nabla df|^2(s) + |df|^4(s)) ds \\
\leq& e^{-4at} V(0) + 4b e^{-4at} \int_{0}^{t} e^{4as} \left( 2 E(0) + 4C_1 E(0)^2 + C_2 \right)\\
\leq& e^{-4at} V(0) + \frac{b}{a} \left( 2 E(0) + 4C_1 E(0)^2 + C_2 \right).
\end{split}
\]
\end{proof}

Note that, from \eqref{e 4u}, we can have for any $c>0$,
\[
e^{-cu} \leq e^{cat}.
\]

\begin{lemma} \label{e^pu}
($e^{pu}$ estimate)
Let $u$ be a solution of the second equation of \eqref{eq1}.
For any $p>1$ and for any $m \geq 0$ and $t_0 \leq t$,
\[
\int_{M} e^{4pu} \varphi^m (t) \leq \int_{M} e^{4pu} \varphi^m (t_0) + \frac{4(p-1)b^2}{pa}\int_{t_0}^{t} \int_{M}  \left( |\nabla df|^2 + |df|^4 \right)^{p} \varphi^m.
\]
\end{lemma}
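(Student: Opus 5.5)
Since $\varphi$ is time-independent, I will work pointwise in $x$ and integrate at the end. Write $F = |\nabla df|^2 + |df|^4 \geq 0$ and $w = e^{4u}$. The second equation of \eqref{eq1}, multiplied by $4e^{4u}$, gives the linear ODE
\[
w_t = 4e^{4u}u_t = 4bF - 4a w .
\]
This is the same ODE that produced \eqref{e 4u}.

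The first step is to compute the time derivative of $w^p$:
\[
\partial_t (w^p) = p w^{p-1} w_t = 4pb\, F\, w^{p-1} - 4pa\, w^p .
\]
The second step absorbs the positive term with Young's inequality. With exponents $p$ and $p/(p-1)$ and a parameter $\ep>0$,
\[
F w^{p-1} \leq \frac{1}{p}\,\ep^{-(p-1)} F^p + \frac{p-1}{p}\,\ep\, w^p .
\]
I choose $\ep$ so that the $w^p$ contribution exactly cancels the damping term:
\[
4pb\cdot\frac{p-1}{p}\,\ep = 4pa, \qquad \text{i.e.}\qquad \ep = \frac{pa}{(p-1)b}.
\]
This leaves
\[
\partial_t(w^p) \leq 4b\,\ep^{-(p-1)} F^p = 4b\left(\frac{(p-1)b}{pa}\right)^{p-1} F^p .
\]

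The last step is to multiply by $\varphi^m \geq 0$, integrate over $M$, and then integrate in time from $t_0$ to $t$. Because $\varphi$ does not depend on time, no extra terms appear, and this yields the claimed inequality with some constant in front of the space-time integral.

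The main obstacle is matching the stated constant $\frac{4(p-1)b^2}{pa}$. My Young-inequality constant is $4b\left(\frac{(p-1)b}{pa}\right)^{p-1}$, which coincides with the stated one only when $p=2$. For general $p$, I would redistribute the weights differently: split $F w^{p-1} = \bigl(F w^{(p-1)/p}\cdot\text{const}\bigr)\cdot w^{(p-1)^2/p}$, or simply compare exponents. If exact agreement cannot be reached, I would present the estimate as ``$\leq$ a constant $C(p,a,b)$'' times the integral, which is all that later uses of this lemma should need. The structural argument (ODE, Young's inequality absorbing into $-4pa\,w^p$, integration in time) is otherwise routine.
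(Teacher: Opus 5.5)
Your argument is the paper's own argument. The paper also differentiates $\int_M e^{4pu}\varphi^m$, with $\varphi$ implicitly time-independent, and writes $4pe^{4pu}u_t=4pe^{4(p-1)u}\bigl(bF-ae^{4u}\bigr)$. It then applies Young's inequality with the same parameter $\lambda=\frac{pa}{(p-1)b}$ (your $\ep$), chosen so that the $e^{4pu}$ contribution cancels the damping term $-4pa\int_M e^{4pu}\varphi^m$. The mismatch in the constant is an error in the paper, not in your computation. The paper records the Young remainder as $4b\lambda^{-1}F^p$. Young's inequality with exponents $p$ and $\frac{p}{p-1}$ actually gives $4b\lambda^{-(p-1)}F^p$, which is your $4b\bigl(\frac{(p-1)b}{pa}\bigr)^{p-1}F^p$. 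The two agree only when $p=2$ or $\frac{(p-1)b}{pa}=1$.

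So drop the idea of redistributing weights: no splitting can do better, because your constant is sharp. For fixed $F\geq 0$, $\sup_{w\geq 0}\bigl(4pbFw^{p-1}-4paw^p\bigr)$ is attained at $w=\frac{(p-1)bF}{pa}$ and equals $4b\bigl(\frac{(p-1)b}{pa}\bigr)^{p-1}F^p$. Now suppose that at time $t_0$ one has $e^{4u}=\frac{(p-1)b}{pa}F>0$ on the support of $\varphi$, and set $x=\frac{(p-1)b}{pa}$. Both sides of the stated inequality agree at $t=t_0$. The left side then grows at rate $4bx^{p-1}\int_M F^p\varphi^m$, while the right side, with constant $\frac{4(p-1)b^2}{pa}=4bx$, grows at rate only $4bx\int_M F^p\varphi^m$. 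So the stated inequality fails for $t$ slightly larger than $t_0$ whenever $x^{p-1}>x$. This happens, for example, for $p=3$ or $p=4$ once $b$ is large compared to $a$, which is exactly the regime of the paper. Your fallback, with your explicit constant $4b\bigl(\frac{(p-1)b}{pa}\bigr)^{p-1}$, is the correct form of the lemma. It is also all that is needed later, since the lemma is only invoked with $p=2,3,4$ and an unspecified constant $C$.
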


\begin{proof}
By direct computation,
\[
\begin{split}
\frac{d}{dt} \left( \int_{M} e^{4pu} \varphi^m (t) \right) =& 4p \int_{M} e^{4pu} u_t \varphi^m (t)\\
=& 4p \int_{M} e^{4(p-1)u} \left( b (|\Delta f|^2 + |df|^4) - a e^{4u} \right) \varphi^m\\
\leq& 4(p-1)b \lambda \int_{M} e^{4pu} \varphi^m + 4b \lambda^{-1} \int_{M} \left( |\nabla df|^2 + |df|^4 \right)^{p} \varphi^m\\
& - 4pa \int_{M} e^{4pu} \varphi^m\\
=& 4b \lambda^{-1} \int_{M} \left( |\nabla df|^2 + |df|^4 \right)^{p} \varphi^m
\end{split}
\]
if we choose $\lambda = \frac{pa}{(p-1)b}$.
The proof is complete if we integrate it over $[t_0,t]$.
\end{proof}

Now we derive integration by parts type inequalities.

\begin{prop} \label{Int by parts general}
Let $F \in C^{\infty}(M \times [0,T],N)$ and $\psi \in C^{\infty}(M)$.
Then
\begin{equation}
\int_{M} |\nabla^2 F|^2 \psi^2 \leq 2 \int_{M} |\Delta F|^2 \psi^2 + C \int_{M} |\nabla F|^2 |\nabla \psi|^2 + C \int_{M} |\nabla F|^2 \psi^2
\end{equation}
where $C$ depends on the Ricci curvature of $g_0$.
\end{prop}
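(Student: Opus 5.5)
We prove the estimate by the standard Bochner-type double integration by parts, carried out with the weight $\psi^2$. Work with respect to the fixed metric $g_0$ and regard $F$ componentwise as a map into $\R^L$. Each component $F^\alpha$ is then a scalar function, so the only commutators are those of \eqref{com 2}, which produce only the curvature of $M$; no curvature of $N$ appears.

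First integrate by parts in $\nabla_i$:
\[
\int_M \nabla_i\nabla_j F\,\nabla_i\nabla_j F\,\psi^2 = -\int_M \nabla_j F\,\nabla_i\nabla_i\nabla_j F\,\psi^2 - 2\int_M \nabla_j F\,\nabla_i\nabla_j F\,\psi\nabla_i\psi .
\]
Next use \eqref{com 2} to write
\[
\nabla_i\nabla_i\nabla_j F = \nabla_j \Delta F + \mathrm{Ric}_{j\ell}\nabla_\ell F
\]
(up to the sign convention for $R$). This gives a Ricci term bounded by $C\int_M|\nabla F|^2\psi^2$, with $C$ depending on $\|\mathrm{Ric}_{g_0}\|_{L^\infty}$.

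Then integrate by parts once more in $\nabla_j$:
\[
-\int_M \nabla_jF\,\nabla_j\Delta F\,\psi^2 = \int_M |\Delta F|^2\psi^2 + 2\int_M \Delta F\,\nabla_jF\,\psi\nabla_j\psi .
\]
Collecting terms,
\[
\int_M|\nabla^2F|^2\psi^2 \le \int_M|\Delta F|^2\psi^2 + 2\int_M |\Delta F||\nabla F||\psi||\nabla\psi| + 2\int_M|\nabla^2F||\nabla F||\psi||\nabla\psi| + C\int_M|\nabla F|^2\psi^2 .
\]

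Finally apply Young's inequality to the two cross terms:
\[
2|\nabla^2F||\nabla F||\psi||\nabla\psi| \le \tfrac12|\nabla^2F|^2\psi^2 + 2|\nabla F|^2|\nabla\psi|^2,
\qquad
2|\Delta F||\nabla F||\psi||\nabla\psi| \le \tfrac12|\Delta F|^2\psi^2 + 2|\nabla F|^2|\nabla\psi|^2 .
\]
Absorb the $\tfrac12\int_M|\nabla^2F|^2\psi^2$ term into the left-hand side and multiply through by $2$. This yields the claimed inequality with constant $2$ (in fact $3$ with this particular splitting of Young's inequality). To get exactly $2$, instead bound $|\Delta F|^2 \le 4|\nabla^2F|^2$ or use $\varepsilon$-Young with $\varepsilon$ small. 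Either way, choosing the $\varepsilon$'s so that the total coefficient of $\int_M|\Delta F|^2\psi^2$ is at most $2$ is straightforward: absorb with weight $\varepsilon$ and put the coefficient $(1+\varepsilon)/(1-\varepsilon)\le 2$ for $\varepsilon\le 1/3$.

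There is no real obstacle in this argument. The only points needing care are tracking the sign and index convention in the commutator \eqref{com 2}, so that the curvature term is exactly a Ricci contraction bounded by $\|\mathrm{Ric}_{g_0}\|$, and noting that since $M$ is closed no boundary terms arise. The time variable plays no role; the estimate holds at each fixed $t$.
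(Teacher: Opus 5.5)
Your proposal is correct and follows essentially the same route as the paper. You integrate by parts once, commute derivatives to get $\nabla\Delta F$ plus a Ricci term, integrate by parts again, and absorb the two cutoff cross terms with $\varepsilon$-Young (the paper's $\delta$), which gives the constant $2$ once $\varepsilon \le 1/3$.
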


\begin{proof}
By integration by parts,
\[
\begin{split}
\int_{M} |\nabla^2 F|^2 \psi^2 =& -\int_{M} \langle \nabla_i \nabla_i \nabla_j F, \nabla_j f \rangle \psi^2 - 2 \int_{M} \langle \nabla_i \nabla_j F, \nabla_j F \rangle \psi \nabla_i \psi\\
=& I + II.
\end{split}
\]
From \eqref{com 5}, we have
\[
\nabla_i \nabla_i \nabla_j F = \nabla_j \nabla_i \nabla_i F + Ric_{ji} \nabla_i F.
\]
So,
\[
\begin{split}
I =& -\int_{M} \langle \nabla_j \nabla_i \nabla_i F, \nabla_j F \rangle \psi^2 - \int_{M} \langle Ric_{ji} \nabla_i F, \nabla_j F \rangle \psi^2\\
\leq & \int_{M} |\Delta F|^2 \psi^2 + 2\int_{M} \langle \Delta F, \nabla F \rangle \psi \nabla \psi + C \int_{M} |\nabla F|^2 \psi^2\\
\leq& (1+\delta) \int_{M} |\Delta F|^2 \psi^2 + C \int_{M} |\nabla F|^2 |\nabla \psi|^2 + C \int_{M} |\nabla F|^2 \psi^2\\
II \leq& \delta \int_{M} |\nabla^2 F|^2 \psi^2 + C \int_{M} |\nabla F|^2 |\nabla \psi|^2.
\end{split}
\]
Combining these together, we obtain the desired inequality.
\end{proof}

As a corollary, we have the following inequalities that are used later.

\begin{lemma} \label{Int by parts}
Let $f \in C^{\infty}(M \times [0,T],N)$, $\varphi$ be any cut-off function on $M \times [0,T]$.
Then
\begin{equation} \label{nabla sq f_t 0}
\begin{split}
\int_{M} |\nabla df_t|^2 \varphi^4 \leq& 2 \int_{M} |\Delta f_t|^2 \varphi^4  + C \int_{M} |d f_t|^2 \varphi^2 |\nabla \varphi|^2 + C \int_{M} |d f_t|^2  \varphi^4  ,
\end{split}
\end{equation}
\begin{equation} \label{nabla sq}
\int_{M} |\nabla df|^2 \varphi^2 \leq 2 \int_{M} |\Delta f|^2 \varphi^2 + C \int_{M} |df|^2 (\varphi^2 + |\nabla \varphi|^2)
\end{equation}
where $C$ depends on the Ricci curvature of $g_0$.

Also, we have
\begin{equation} \label{nabla^3 f}
\begin{split}
\int_{M} |\nabla^2 df|^2 \varphi^4 \leq& 2  \int_{M} |\Delta df|^2 \varphi^4 + C \int_{M} (|\nabla df|^2 |df|^2 + |df|^6) \varphi^4 + C\int_{M} (1 + |\nabla df|^2) \varphi^4 \\
& + C\int_{M} |\nabla df|^2 \varphi^2 |\nabla \varphi|^2
\end{split}
\end{equation}
%\begin{equation} \label{nabla^4 f}
%\begin{split}
%\int_{M} |\nabla^2 \Delta f|^2 \varphi^4 \leq& 2  \int_{M} |\Delta^2 f|^2 \varphi^4 + C \int_{M} (|\nabla df|^2 |df|^2 + |df|^6) \varphi^4 + C\int_{M} (1 + |\nabla df|^2) \varphi^4 \\
%& + C\int_{M} |\nabla df|^2 \varphi^2 |\nabla \varphi|^2
%\end{split}
%\end{equation}
%\begin{equation} \label{nabla^4 f 2}
%\begin{split}
%\int_{M} |\nabla^3 df|^2 \varphi^4 \leq& 2  \int_{M} |\Delta df|^2 \varphi^4 + C \int_{M} (|\nabla df|^2 |df|^2 + |df|^6) \varphi^4 + C\int_{M} (1 + |\nabla df|^2) \varphi^4 \\
%& + C\int_{M} |\nabla df|^2 \varphi^2 |\nabla \varphi|^2
%\end{split}
%\end{equation}
where $C$ depends on the Ricci curvature of $g_0$.

For any $\delta>0$, we have
\begin{equation} \label{df^2 df_t^2}
\begin{split}
\int_{M} |df|^2 |df_t|^2 \varphi^4 \leq& \delta \int_{M} |\Delta f_t|^2 \varphi^4 + C \int_{B_R} |f_t|^2 |df|^4 \varphi^4 + C \int_{M} |f_t|^2 |\nabla df|^2 \varphi^4 \\
&+ C \int_{M} |df_t|^2 \varphi^2 |\nabla \varphi|^2
\end{split}
\end{equation}
\begin{equation} \label{df_t nabla phi}
\int_{M} |df_t|^2 \varphi^2 |\nabla \varphi|^2 \leq \delta \int_{M} |\Delta f_t|^2 \varphi^4 + C \int_{M} |f_t|^2 |\nabla \varphi|^4 + C \int_{M} |f_t|^2 \varphi^2 |\nabla^2 \varphi|^2
\end{equation}
where $C$ depends on the Ricci curvature of $g_0$ and $\delta$.
\end{lemma}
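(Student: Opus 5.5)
The plan is to derive all five inequalities from Proposition \ref{Int by parts general}, adding a few elementary integrations by parts and Young's inequality, with $\delta$ absorption throughout.

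\emph{Estimates \eqref{nabla sq f_t 0} and \eqref{nabla sq}.} Proposition \ref{Int by parts general} applies to any smooth $\mathbb{R}^L$-valued function, since its proof uses only the Ricci commutator \eqref{com 5}. For \eqref{nabla sq f_t 0} I take $F=f_t$ and $\psi=\varphi^2$. Then $|\nabla\psi|^2=4\varphi^2|\nabla\varphi|^2$, and the inequality follows directly. For \eqref{nabla sq} I take $F=f$ and $\psi=\varphi$.

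\emph{Estimate \eqref{nabla^3 f}.} I repeat the proof of Proposition \ref{Int by parts general} with $F$ replaced by the vector-valued one-form $df$ and with $\psi=\varphi^2$. After integrating by parts, commuting $\nabla_i\nabla_i\nabla_j\nabla_k f$ into $\nabla_j\Delta\nabla_k f$ and then into $\nabla_j\nabla_k\Delta f$ produces curvature terms $Rm * \nabla df$ and $\nabla Rm * df$. Pairing these against $\nabla^2 df$, or against $\nabla df$ after a further integration by parts, and applying Young's inequality gives contributions of the form $C\int(1+|\nabla df|^2)\varphi^4$ plus a term $\delta\int|\nabla^2 df|^2\varphi^4$ that is absorbed. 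The cutoff terms give $C\int|\nabla df|^2\varphi^2|\nabla\varphi|^2$. The terms $|\nabla df|^2|df|^2+|df|^6$ are harmless extra terms on the right, so if the bare computation does not produce them, the inequality holds trivially. Their natural origin is writing $\Delta df$ via $d\Delta f$ and using $P$-dependent identities, but I would not need that here.

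\emph{Estimate \eqref{df_t nabla phi}.} I integrate by parts once:
\[
\int|df_t|^2\varphi^2|\nabla\varphi|^2=-\int\langle f_t,\Delta f_t\rangle\varphi^2|\nabla\varphi|^2-\int\langle f_t,\nabla f_t\rangle\cdot\nabla(\varphi^2|\nabla\varphi|^2).
\]
\begin{itemize}
\item For the first term, Young's inequality gives $\delta\int|\Delta f_t|^2\varphi^4+C\int|f_t|^2|\nabla\varphi|^4$.
\item For the second term, $\nabla(\varphi^2|\nabla\varphi|^2)$ is bounded by $C\varphi|\nabla\varphi|^3+C\varphi^2|\nabla\varphi||\nabla^2\varphi|$. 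Young's inequality then gives $\tfrac12\int|df_t|^2\varphi^2|\nabla\varphi|^2$, which is absorbed, plus $C\int|f_t|^2(|\nabla\varphi|^4+\varphi^2|\nabla^2\varphi|^2)$.
\end{itemize}

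\emph{Estimate \eqref{df^2 df_t^2}.} This is the main obstacle. I integrate by parts in the $df_t$ factor:
\[
\int|df|^2|df_t|^2\varphi^4=-\int\langle f_t,\Delta f_t\rangle|df|^2\varphi^4-\int\langle f_t,\nabla f_t\rangle\cdot\nabla(|df|^2\varphi^4).
\]
\begin{itemize}
\item The first term is at most $\delta\int|\Delta f_t|^2\varphi^4+C\int|f_t|^2|df|^4\varphi^4$.
\item The second term splits into two pieces. One piece is $\int|f_t||df_t||df||\nabla df|\varphi^4$, which is at most $\tfrac14\int|df|^2|df_t|^2\varphi^4+C\int|f_t|^2|\nabla df|^2\varphi^4$. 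The other piece is $\int|f_t||df_t||df|^2\varphi^3|\nabla\varphi|$, which is at most $\tfrac14\int|df|^2|df_t|^2\varphi^4+C\int|f_t|^2|df|^2\varphi^2|\nabla\varphi|^2$.
\end{itemize}
The first part of each piece is absorbed into the left side. The remaining cutoff term $C\int|f_t|^2|df|^2\varphi^2|\nabla\varphi|^2$ does not match any term in the stated bound. I expect the intended handling is to bound it by $C\int|f_t|^2|df|^4\varphi^4+C\int|f_t|^2|\nabla\varphi|^4$, or else to arrange the splitting so that the cutoff derivative falls on $|df_t|^2\varphi^2|\nabla\varphi|^2$, which is the term appearing in the statement. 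To get that, I would distribute the weight as $(|f_t||df|^2\varphi^2)\cdot(|df_t|\varphi|\nabla\varphi|)$ and bound the product by $C\int|f_t|^2|df|^4\varphi^4+C\int|df_t|^2\varphi^2|\nabla\varphi|^2$. This choice yields exactly the four terms stated, and I will commit to it. The domain $B_R$ in the second term is understood as the support of $\varphi$.
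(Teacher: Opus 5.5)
Your proposal is correct. For \eqref{nabla sq f_t 0}, \eqref{nabla sq}, \eqref{df_t nabla phi} and \eqref{df^2 df_t^2} it is the paper's proof:
\begin{itemize}
\item you make the same choices $F=f_t,\ \psi=\varphi^2$ and $F=f,\ \psi=\varphi$ in Proposition \ref{Int by parts general};
\item you use the same single integration by parts followed by Young's inequality;
\item for the cutoff term in \eqref{df^2 df_t^2} you commit to exactly the paper's splitting $(|f_t||df|^2\varphi^2)(|df_t|\varphi|\nabla\varphi|)$.
\end{itemize}
Your remark that Proposition \ref{Int by parts general} applies to $\mathbb{R}^L$-valued $F$ such as $f_t$ is a useful clarification. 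The paper states that proposition only for $N$-valued maps and applies it to $f_t$ without comment.

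The one place you deviate is \eqref{nabla^3 f}. The paper integrates by parts and then invokes the commutator bound \eqref{comm}, quoted from Lamm. That bound carries target-curvature terms $|df|^2|\nabla df|$ and $|df|^4$, and these are what produce $C\int(|\nabla df|^2|df|^2+|df|^6)\varphi^4$. You instead commute with the flat $\mathbb{R}^L$ connection, where only $Rm$ and $\nabla Rm$ of $g_0$ appear. This is consistent with the paper's own remark on embedded commutators, and it shows those target terms are not forced.

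One correction to your bookkeeping: the $\nabla Rm * df$ term, however you pair it, yields $C\int|df|^2\varphi^4$. That is not of the form $C\int(1+|\nabla df|^2)\varphi^4$. You need $|df|^2\le 1+|df|^6$ to fit it into the stated right-hand side, so the $|df|^6$ term is actually used rather than being purely superfluous.

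Also, your second commutation to $\nabla_j\nabla_k\Delta f$ is unnecessary. Stopping at $\nabla_j(\Delta df)_k$, with $\Delta$ the rough Laplacian on one-forms, and integrating by parts once more gives $\int|\Delta df|^2\varphi^4$ directly, exactly as in the proof of Proposition \ref{Int by parts general}.
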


\begin{proof}
\eqref{nabla sq f_t 0} is direct from \Cref{Int by parts general} with $F=f_t$ and $\psi = \varphi^2$.
Similarly, \eqref{nabla sq} can be obtained with $F=f$ and $\psi = \varphi$.

For \eqref{nabla^3 f}, we note that for any $X_1, X_2 \in \Gamma (TM)$,
\begin{equation} \label{comm}
\begin{split}
|(\nabla \Delta f - \Delta df)(X_1)| \leq& C (|df|^3 + |df|) |X_1|,\\
|(\nabla \Delta df - \Delta \nabla df)(X_1,X_2)| \leq& C (|df|^2 |\nabla df| + |\nabla df| + |df|^4 + |df|) |X_1| |X_2|
\end{split}
\end{equation}
where $C$ only depends on curvature tensors of $M$. (see Lamm 2004, equation 2.3 and 2.4.)
Then by integration by parts, we have
\[
\begin{split}
\int_{M} |\nabla^2 df|^2 \varphi^4 \leq & -\int_{M} \langle \nabla \Delta df, \nabla df \rangle \varphi^4 + C \int_{M} (|df|^2 |\nabla df| + |\nabla df| + |df|^4  + |df|)|\nabla df| \varphi^4\\
& + 4 \int_{M} |\nabla^2 df| |\nabla df| \varphi^3 |\nabla \varphi|\\
\leq& \int_{M} |\Delta df|^2 \varphi^4 + 4 \int_{M} |\Delta df| |\nabla df| \varphi^3 |\nabla \varphi| + C \int_{M} (|df|^2 |\nabla df|^2 + |\nabla df|^2 ) \varphi^4\\
&+ C \int_{M} ( |df|^6 + 1) \varphi^4 + \delta \int_{M} |\nabla^2 df|^2 \varphi^4 + C \int_{M} |\nabla df|^2  \varphi^2 |\nabla \varphi|^2\\
\leq& (1+\delta) \int_{M} |\Delta df|^2 \varphi^4  +  C \int_{M} (|df|^2 |\nabla df|^2 + |\nabla df|^2) \varphi^4\\
&+ C \int_{M} (|df|^6 + 1) \varphi^4 + \delta \int_{M} |\nabla^2 df|^2 \varphi^4 + C \int_{M} |\nabla df|^2  \varphi^2 |\nabla \varphi|^2.
\end{split}
\]

For \eqref{df^2 df_t^2}, by integration by parts,
\[
\begin{split}
\int_{B_R} |df|^2 |df_t|^2 \varphi^4
 =& -\int_{B_R} \langle \Delta f_t,f_t \rangle |df|^2 \varphi^4 - 2\int_{B_R} \langle df_t,f_t \rangle \langle \nabla df, df \rangle \varphi^4 - 4 \int_{B_R} \langle df_t,f_t \rangle |df|^2 \varphi^3 \nabla \varphi\\
\leq& \delta \int_{B_R} |\Delta f_t|^2 \varphi^4 + C \int_{B_R} |f_t|^2 |df|^4 \varphi^4 + \frac{1}{2} \int_{B_R} |df|^2 |df_t|^2 \varphi^4\\
& + C \int_{B_R} |f_t|^2 |\nabla df|^2 \varphi^4 + C \int_{B_R} |df_t|^2 \varphi^2 |\nabla \varphi|^2.
\end{split}
\]
Similarly, for \eqref{df_t nabla phi}, by integration by parts,
\[
\begin{split}
\int_{B_R} |d f_t|^2 \varphi^2 |\nabla \varphi|^2 =& -\int_{B_R} \langle \Delta f_t, f_t \rangle \varphi^2 |\nabla \varphi|^2- \int_{B_R} \langle d f_t, f_t \rangle 2 \varphi \nabla \varphi |\nabla \varphi|^2\\
&- \int_{B_R} \langle d f_t, f_t \rangle \varphi^2 2 \langle \nabla \varphi, \nabla^2 \varphi \rangle\\
\leq& \delta \int_{B_R} |\Delta f_t|^2 \varphi^4 + C \int_{B_R} |f_t|^2 |\nabla \varphi|^4 + \frac{1}{2} \int_{B_R} |d f_t|^2 \varphi^2 |\nabla \varphi|^2\\
&+ C \int_{B_R} |f_t|^2 \varphi^2 |\nabla^2 \varphi|.
\end{split}
\]
\end{proof}

Using commutator inequality \eqref{comm}, we can replace $|\Delta df|^2$ by $|\nabla \Delta f|^2$ in \eqref{nabla^3 f}.

Combining Sobolev embedding and using some integration by parts technique, we have more useful inequalities.

\begin{lemma} \label{Sobolev}
Let $f \in C^{\infty}(M \times [0,T])$, $\varphi$ be any cut-off function on $M \times [0,T]$.
%Also, assume $E(f) \leq E_0<\infty$.
%Also, assume that $\int_{\supp \varphi} |df|^4 \leq \ep$ for some constant $\ep$.
Then,
% for any $\delta>0$,
\begin{equation} \label{nabla sq f}
\begin{split}
\int_{M} |\nabla df|^2 |df|^2 \varphi^4 \leq& C \left( \int_{\supp \varphi} |df|^4 \right)^{\frac{1}{2}} \left( \int_{M} | \nabla^2 df|^2 \varphi^4 +  |\nabla df|^2 \varphi^2 |\nabla \varphi|^2 \right)
\end{split}
\end{equation}
\begin{equation} \label{df^8}
\begin{split}
\left( \int_{M}  |df|^8 \varphi^8 \right)^{\frac{1}{2}} \leq& C \left( \int_{\supp \varphi} |df|^4 \right)^{\frac{1}{2}} \left( \int_{M} | \nabla^2 df|^2 \varphi^4 +  |\nabla df|^2 \varphi^2 |\nabla \varphi|^2 \right)\\
&+ C \int_{M} |df|^4 \varphi^2 |\nabla \varphi|^2
\end{split}
\end{equation}
\begin{equation} \label{df^6}
\begin{split}
\int_{M}  |df|^6 \varphi^4 \leq& C \left( \int_{\supp \varphi} |df|^4 \right) \left( \int_{M} | \nabla^2 df|^2 \varphi^4 + |\nabla df|^2 \varphi^2 |\nabla \varphi|^2 \right)\\
&+ C \left( \int_{\supp \varphi} |df|^4 \right)^{\frac{1}{2}} \left( \int_{M} |df|^4 \varphi^2 |\nabla \varphi|^2 \right)
\end{split}
\end{equation}
%where $C$ depends on the Ricci curvature of $g_0$.
where $C$ is universal constant.
%\begin{equation} \label{df^4}
%\int_{M} |df|^4 \varphi^4 \leq C \int_{M} |\Delta f|^2
%\end{equation}
%Also, we have
%\begin{equation} \label{nabla^2 f}
%\begin{split}
%\int_{M} |\nabla df|^2 \varphi^4 \leq& 2 \int_{M} |\Delta f|^2 \varphi^4 + C \int_{M} |df|^6 \varphi^4 + C \int_{M} |df|^2 (\varphi^4 + \varphi^2 |\nabla \varphi|^2)
%\end{split}
%\end{equation}
\end{lemma}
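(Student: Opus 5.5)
All three inequalities follow from Hölder's inequality combined with the four-dimensional Sobolev embedding $W^{1,2}\hookrightarrow L^4$, applied to compactly supported functions built from $df$ and $\varphi$. Since $\varphi$ is compactly supported, for any function $w$ we may use
\[
\Big(\int_M |w\varphi^2|^4\Big)^{1/2}\le C\int_M |\nabla(w\varphi^2)|^2 + C\int_M |w|^2\varphi^4 .
\]
On a closed manifold the lower-order term is present. Because it appears on the right of each estimate only with the same structure, I would absorb it using the stated form; if necessary it can be viewed as part of the $|\nabla\varphi|^2$ terms after localizing to small balls. The single key step is this embedding applied with $w=|\nabla df|$ and with $w=|df|^2$, together with Kato's inequality $|\nabla|\nabla df||\le|\nabla^2 df|$.

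For \eqref{nabla sq f}, apply Hölder with exponents $(2,2)$:
\[
\int_M |\nabla df|^2|df|^2\varphi^4\le \Big(\int_{\supp\varphi}|df|^4\Big)^{1/2}\Big(\int_M|\nabla df|^4\varphi^8\Big)^{1/2}.
\]
Then apply the embedding to $|\nabla df|\varphi^2$:
\[
\Big(\int_M|\nabla df|^4\varphi^8\Big)^{1/2}\le C\int_M |\nabla|\nabla df||^2\varphi^4 + C\int_M |\nabla df|^2\varphi^2|\nabla\varphi|^2 ,
\]
since $|\varphi|\le 1$ gives $\varphi^2\,|\nabla(\varphi^2)|^2\lesssim \varphi^2|\nabla\varphi|^2$. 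Kato's inequality bounds the first term by $\int|\nabla^2 df|^2\varphi^4$.

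For \eqref{df^8}, apply the embedding to $w=|df|^2$, i.e.\ to the function $|df|^2\varphi^2$:
\[
\Big(\int_M |df|^8\varphi^8\Big)^{1/2}\le C\int_M |\nabla |df|^2|^2\varphi^4 + C\int_M |df|^4\varphi^2|\nabla\varphi|^2 .
\]
Since $|\nabla|df|^2|\le 2|df||\nabla df|$, the first term is at most $C\int |df|^2|\nabla df|^2\varphi^4$, which is controlled by \eqref{nabla sq f}. This yields \eqref{df^8}.

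For \eqref{df^6}, write $|df|^6\varphi^4=|df|^2\cdot|df|^4\varphi^4$ and apply Hölder:
\[
\int_M |df|^6\varphi^4\le\Big(\int_{\supp\varphi}|df|^4\Big)^{1/2}\Big(\int_M|df|^8\varphi^8\Big)^{1/2}.
\]
Inserting \eqref{df^8} gives both terms of \eqref{df^6} with the stated powers of $\int_{\supp\varphi}|df|^4$: the factors $(\cdot)^{1/2}(\cdot)^{1/2}$ combine into $(\cdot)^1$ on the first term, and the single factor $(\cdot)^{1/2}$ stays on the second.

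The only delicate point is the lower-order term in the Sobolev inequality on a compact $M$ without boundary. I would handle it by working in coordinate balls of radius below the injectivity radius, where the Euclidean-type inequality $\|w\|_{L^4}\le C\|\nabla w\|_{L^2}$ holds for compactly supported $w$; the cut-off functions used later are localized in this way. This is why $C$ can be taken universal.
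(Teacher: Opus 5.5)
Your proof is correct and follows the paper's argument step for step. For the first estimate you use H\"older with exponents $(2,2)$ and then $W^{1,2}_0\hookrightarrow L^4$ applied to $|\nabla df|\varphi^2$. For the second you apply the same embedding to $|df|^2\varphi^2$, using $|\nabla|df|^2|\le 2|df||\nabla df|$, and then invoke the first estimate. For the third you split $|df|^6\varphi^4=|df|^2\cdot|df|^4\varphi^4$, apply H\"older, and insert the second estimate. Handling the lower-order Sobolev term by taking $\varphi$ supported in a small coordinate ball is exactly the implicit assumption behind the paper's use of $W^{1,2}_0\hookrightarrow L^4$, and it matches how the lemma is later applied on $B_r$. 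Your explicit appeal to Kato's inequality fills a step the paper leaves unstated.
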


\begin{proof}
By Sobolev embedding $W^{1,2}_0 \hookrightarrow L^4$ and H{\"o}lder, we have
\[
\begin{split}
\int_{M} |\nabla df|^2 |df|^2 \varphi^4 \leq& \left( \int_{\supp \varphi} |df|^4 \right)^{\frac{1}{2}} \left( \int_{M} |\nabla df|^4 \varphi^8 \right)^{\frac{1}{2}}\\
\leq& C \left( \int_{\supp \varphi} |df|^4 \right)^{\frac{1}{2}} \left( \int_{M} |\nabla^2 df|^2 \varphi^4 + |\nabla df|^2 \varphi^2 |\nabla \varphi|^2 \right).
\end{split}
\]
\[
\begin{split}
\left( \int_{M} |df|^8 \varphi^8 \right)^{\frac{1}{2}} \leq& C \left( \int_{M} |\nabla df|^2 |df|^2 \varphi^4 + |df|^4 \varphi^2 |\nabla \varphi|^2 \right)\\
\leq& C \left( \int_{\supp \varphi} |df|^4 \right)^{\frac{1}{2}} \left( \int_{M} |\nabla^2 df|^2 \varphi^4 + |\nabla df|^2 \varphi^2 |\nabla \varphi|^2 \right)\\
&+ C \left( \int_{M} |df|^4 \varphi^2 |\nabla \varphi|^2 \right)
\end{split}
\]
\[
\begin{split}
\int_{M} |df|^6 \varphi^4 \leq& \left( \int_{\supp \varphi} |df|^4 \right)^{\frac{1}{2}} \left( \int_{M} |df|^8 \varphi^8 \right)^{\frac{1}{2}}\\
\leq& C \left( \int_{\supp \varphi} |df|^4 \right) \left( \int_{M} |\nabla^2 df|^2 \varphi^4 + |\nabla df|^2 \varphi^2 |\nabla \varphi|^2 \right)\\
&+ C \left( \int_{\supp \varphi} |df|^4 \right)^{\frac{1}{2}} \left( \int_{M} |df|^4 \varphi^2 |\nabla \varphi|^2 \right).
\end{split}
\]
This completes the proof.
\end{proof}

We also have the following $W^{3,2}$ estimate.

\begin{lemma} \label{loc W32 lem}
(Local $W^{3,2}$ estimate)
Let $(f,u)$ be a smooth solution of \eqref{eq1} on $M \times [t_1,t_2]$.
%Also assume that $\int_{\supp \varphi} |df|^4 \leq \ep$ for some $\ep>0$.
Then
\begin{equation} \label{loc W32}
\begin{split}
 \int_{M} |\nabla \Delta f|^2 \varphi^4 \leq& C \int_{M} |\nabla df|^2 |df|^2 \varphi^4 + C \int_{M} |df|^6 \varphi^4 + C \int_{M} |\Delta f|^2 \varphi^2 |\nabla \varphi|^2\\
& + C \int_{M} e^{4u} |\Delta f| |f_t| \varphi^4,
\end{split}
\end{equation}
\end{lemma}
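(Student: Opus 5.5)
We test the equation against $\Delta f\,\varphi^4$ and integrate by parts once. This produces $|\nabla\Delta f|^2$ on the good side and only lower-order, tangential/normal structure terms elsewhere. Write the first equation of \eqref{eq1} as $e^{4u}f_t=-\Delta^2 f+B$ with $B$ as in \eqref{B perp}, and integrate against $\langle\cdot,\Delta f\rangle\varphi^4$. For the leading term,
\[
-\int_M\langle\Delta^2 f,\Delta f\rangle\varphi^4=\int_M|\nabla\Delta f|^2\varphi^4+4\int_M\langle\nabla\Delta f,\Delta f\rangle\varphi^3\nabla\varphi .
\]
The cut-off term is absorbed by Young's inequality as $\delta\int|\nabla\Delta f|^2\varphi^4+C\int|\Delta f|^2\varphi^2|\nabla\varphi|^2$. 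The left-hand side is bounded by $\int e^{4u}|f_t||\Delta f|\varphi^4$, which is exactly the last term of \eqref{loc W32}. So everything reduces to showing that
\[
\int_M\langle B,\Delta f\rangle\varphi^4\le \delta\int_M|\nabla\Delta f|^2\varphi^4+C\int_M(|\nabla df|^2|df|^2+|df|^6)\varphi^4+C\int_M|\Delta f|^2\varphi^2|\nabla\varphi|^2 .
\]

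The main obstacle is handling $B$. It contains third and fourth derivatives of $f$, so a naive bound would give terms like $|\nabla^2 df||df|^2|\Delta f|$, which are not of the allowed form. I would integrate by parts once more in each divergence-form piece.

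\textbf{First piece.} For $\Delta(A(df,df))$ we have $\int\langle\Delta(A(df,df)),\Delta f\rangle\varphi^4=-\int\langle\nabla(A(df,df)),\nabla\Delta f\rangle\varphi^4-4\int\langle\nabla(A(df,df)),\Delta f\rangle\varphi^3\nabla\varphi$. Here $|\nabla(A(df,df))|\le C(|df|^3+|df||\nabla df|)$ by the bounds on $A,DA$.

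\textbf{Second piece.} Similarly, $2\nabla\langle\Delta f,\nabla P\rangle$ becomes, after one integration by parts, $-2\int\langle\Delta f,\nabla P\rangle\nabla\Delta f\,\varphi^4$ plus a cut-off term. Since $\nabla P=DP(df)$, the integrand is bounded by $|\Delta f||df||\nabla\Delta f|$.

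\textbf{Third piece.} For $\langle\Delta f,\Delta P\rangle$ there is no outer derivative. Since $|\Delta P|\le C(|\nabla df|+|df|^2)$, its contribution is bounded directly by $C\int|\Delta f|(|\nabla df|+|df|^2)|\Delta f|\varphi^4$.

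In each case I then apply Young's inequality. The terms against $\nabla\Delta f$ give $\delta\int|\nabla\Delta f|^2\varphi^4+C\int(|df|^6+|df|^2|\nabla df|^2)\varphi^4$, using $|\Delta f|\le C|\nabla df|$. The cut-off terms give $C\int|\Delta f|^2\varphi^2|\nabla\varphi|^2+C\int(|df|^6+|df|^2|\nabla df|^2)\varphi^4$.

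One delicate point remains. The third piece produces $|\nabla df||\Delta f|^2$, which is cubic in second derivatives and not directly of the allowed form. To handle it I would use the orthogonality in \eqref{B perp}. Testing only against the normal part of $\Delta f$, which equals $-A(df,df)$ up to sign, we have $\langle B,\Delta f\rangle=\langle B,(\Delta f)^\perp\rangle$, and $|(\Delta f)^\perp|\le C|df|^2$. With this substitution the third piece becomes $C\int(|\nabla df|+|df|^2)|\Delta f||df|^2\varphi^4$, which is bounded by the allowed terms. I would apply the same normal-projection trick before integrating by parts in the first two pieces, at the cost of extra terms where derivatives fall on $P^\perp$. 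These are of the same order, $|df|\cdot$(lower order), and are again bounded by $|df|^2|\nabla df|^2+|df|^6$ or absorbed via $\delta$. Choosing $\delta$ small and absorbing the $\delta\int|\nabla\Delta f|^2\varphi^4$ terms into the left-hand side yields \eqref{loc W32}.
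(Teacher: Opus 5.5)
Your argument is correct. Its skeleton is the paper's: test against $\Delta f\,\varphi^4$, integrate by parts in $\Delta^2 f$ and in $\Delta(A(df,df))$, apply Young's inequality, and absorb. Where you differ is in how you handle the cubic term.

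The paper does not use the normality of $B$ in this lemma. It bounds the terms from $\langle \Delta f,\Delta P\rangle$ and $\nabla\langle\Delta f,\nabla P\rangle$, tested against $\Delta f$, by size alone, which produces $\int|\Delta f|^3\varphi^4$. It then removes this term by one more integration by parts, writing $\Delta f=\operatorname{div}(df)$, so that
$\int|\Delta f|^3\varphi^4\le C\int|df||\Delta f||\nabla\Delta f|\varphi^4+C\int|df||\Delta f|^2\varphi^3|\nabla\varphi|$.
Young's inequality then gives an absorbable multiple of $\int|\nabla\Delta f|^2\varphi^4$ plus allowed terms.

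You instead use the pointwise identity $B=P^\perp(\Delta^2 f)$ to replace $\Delta f$ by $(\Delta f)^\perp=P^\perp\Delta f$. This is $A(df,df)$ up to the sign convention, hence of size $C|df|^2$, so no term cubic in second derivatives ever arises. If you differentiate $(\Delta f)^\perp$ in the form $A(df,df)$ after integrating by parts, the $B$-terms do not even produce $\nabla\Delta f$.

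What each route buys:
\begin{itemize}
\item Yours is shorter and more structural, but it relies on the exact algebraic form of $B$ and on $f$ taking values in $N$.
\item The paper's uses only the bounds $|\nabla P|\le C|df|$ and $|\Delta P|\le C(|\Delta f|+|df|^2)$, so it is insensitive to the precise normal structure.
\end{itemize}

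One detail to make explicit in your write-up concerns the cut-off terms. There you must use $|(\Delta f)^\perp|\le|\Delta f|$ rather than $|(\Delta f)^\perp|\le C|df|^2$. The latter produces $\int|df|^4\varphi^2|\nabla\varphi|^2$, which is not among the terms allowed on the right-hand side of the lemma. The bound $C|df|^2$ belongs only in the interior terms.
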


\begin{proof}
%We first show \eqref{loc W32}.
Note that
\[
|\nabla P| \leq \|\nabla P\| |df| \leq C_N |df|, \quad |\Delta P| \leq \|\nabla^2 P\| (|\Delta f| + |df|^2) \leq C_N (|\Delta f| + |df|^2).
\]

Multiply the first equation in \eqref{eq1} with $- \Delta f \varphi^4$ and integrate to get
\[
\begin{split}
-\int_{M} e^{4u} \langle f_t, \Delta f \rangle \varphi^4 =& \int_{M} \langle \Delta^2 f, \Delta f \rangle \varphi^4 - \int_{M} \langle \Delta (A(df,df)), \Delta f \rangle \varphi^4\\
&+ \int_{M} \langle \langle \Delta f, \Delta P \rangle, \Delta f \rangle \varphi^4 - 2 \int_{M} \langle \nabla \langle \Delta f, \nabla P \rangle, \Delta f \rangle \varphi^4\\
\leq & -\int_{M} |\nabla \Delta f|^2 \varphi^4 - 4 \int_{M} \langle \nabla \Delta f, \Delta f \rangle \varphi^3 \nabla \varphi\\
&+  \int_{M} \langle 2A (\nabla df, df) + DA(df,df) \cdot df, \nabla \Delta f \varphi^4 + 4 \Delta f \varphi^3 \nabla \varphi \rangle\\
&+ 3C_N \int_{M} (|\Delta f|^3  + |\Delta f|^2 |df|^2) \varphi^4 + 2C_N \int_{M} |\nabla \Delta f| |\Delta f| |df| \varphi^4\\
=& -\int_{M} |\nabla \Delta f|^2 \varphi^4 + I + II + III + 2IV.
\end{split}
\]
$I$ and $II$ are estimated by
\[
\begin{split}
I \leq& \delta \int_{M} |\nabla \Delta f|^2 \varphi^4 + C \int_{M} |\Delta f|^2 \varphi^2 |\nabla \varphi|^2.\\
II \leq& C_N \int_{M} (2|\nabla df| |df| + |df|^3) (|\nabla \Delta f| \varphi^4 + 4|\Delta f| \varphi^3 |\nabla \varphi|) \\
\leq& \frac{1}{4} \int_{M} |\nabla \Delta f|^2 \varphi^4 + 4C_N^2 \int_{M} |\nabla df|^2 |df|^2 \varphi^4 + \frac{1}{8} \int_{M} |\nabla \Delta f|^2 \varphi^4 + 2C_N^2 \int_{M} |df|^6 \varphi^4\\
&+ 4C_N \int_{M} |\nabla df|^2 |df|^2 \varphi^4  + 2C_N \int_{M} |df|^6 \varphi^4 + 6C_N \int_{M} |\Delta f|^2 \varphi^2 |\nabla \varphi|^2.
\end{split}
\]
%\[
%\begin{split}
%\leq& -(1-\delta) \int_{M} |\nabla \Delta f|^2 \varphi^4 + C \int_{M} |\Delta f|^2 \varphi^2 |\nabla \varphi|^2\\
%&+ \frac{1}{4} \int_{M} |\nabla \Delta f|^2 \varphi^4 + 4C_N^2 \int_{M} |\nabla^2 f|^2 |df|^2 \varphi^4\\
%&+ \frac{1}{8} \int_{M} |\nabla \Delta f|^2 \varphi^4 + 2C_N^2 \int_{M} |df|^6 \varphi^4\\
%&+ \delta  \int_{M} |\nabla^2 f|^2 |df|^2 \varphi^4  + \delta \int_{M} |df|^6 \varphi^4\\
%&+ C_N \int_{M} (|\Delta f|^3  + |\Delta f|^2 |df|^2) \varphi^4 + \frac{1}{4} \int_{M} |\nabla \Delta f|^2 \varphi^4 + C_N^2 |\Delta f|^2 |df|^2 \varphi^4.
%\end{split}
%\]
To estimate $III$, first note that by integration by parts,
\[
\begin{split}
3C_N \int_{M} |\Delta f|^3 \varphi^4 \leq& 9C_N  \int_{M}  |\nabla \Delta f| |df| |\Delta f| \varphi^4 + 12C_N \int_{M} |\Delta f|^2 |df| \varphi^3 |\nabla \varphi|\\
\leq& 9IV + 6 C_N \int_{M} |\Delta f|^2 |df|^2 \varphi^4 + 6 C_N \int_{M} |\Delta f|^2 \varphi^2 |\nabla \varphi|^2.
\end{split}
\]
Finally, $11 IV$ can be estimate by
\[
11 IV \leq \frac{1}{2} \int_{M} |\nabla \Delta f|^2 \varphi^4 + C(C_N) \int_{M} |\Delta f|^2 |df|^2 \varphi^4.
\]
Rearranging terms with suitable choice of $\delta$, we get desired inequality.
\end{proof}

%%%%%%%%%%%%%%%%%%%%%%%%%%%%%%%%%%%%%%%%%%%%%%%%%%%%%%%%%%%%%%%%%%%%%%%%%%%%%
\section{Local energy estimate}
\label{sec3}

In this section we work on some estimates from local energy estimate and derivative estimate.
We first derive those two estimates, and from them we get controls for several terms.
Due to the integral nature of $f_t$ term, most estimates are written in $L^p L^q$ style.

We assume $(f,u)$ be a smooth solution of \eqref{eq1} on $M \times [t_1,t_2]$ with $E(f(0)) = E_0 < \infty$.
%Many of them are similar to CHF case, but some are changed.
%Fix $B_{R}$ and $\varphi \in C_0^{\infty}(B_{R})$ be a cut-off function on $B_{R}$ such that $\varphi \equiv 1$ on $B_{R/2}$, $0 \leq \varphi \leq 1$ and $|\nabla \varphi| \leq \frac{4}{R}$.
Let $\varphi$ be a cut-off function.
% such that $|\nabla \varphi| \leq \frac{C}{R}, |\Delta \varphi|, |\nabla^2 \varphi| \leq \frac{C}{R^2}$.
Also, we assume there exists a constant $C_N$ only depending on the isometric embedding $(N,h) \hookrightarrow \R^L$ such that $\|P\|, \|\nabla P\|, \|\nabla^2 P\|, \|A\|, \|DA\|, \|D^2 A\| \leq C_N$.
%$\|R^N\|, \|P\|, \|\nabla P\|, \|\nabla^2 P\|, \|A\|, \|DA\|, \|D^2 A\| \leq C_N$ where $R^N$ is the Riemannian curvature tensor of $N$.

\begin{lemma} \label{loc E lem}
(Local energy estimate)
Let $(f,u)$ be a smooth solution of \eqref{eq1} on $M \times [t_1,t_2]$.
Then
\begin{equation} \label{loc E}
\begin{split}
\int_{M} e^{4u}|f_t|^2 \varphi^4 + \frac{d}{dt} \int_{M}  |\Delta f|^2 \varphi^4 \leq& 16 \int_{M} |\Delta f| |df_t|  \varphi^3 |\nabla \varphi| + C e^{4at} \int_{M}  |\Delta f|^2 \left( |\nabla \varphi|^4 + \varphi^2 |\Delta \varphi|^2 \right).
\end{split}
\end{equation}
\end{lemma}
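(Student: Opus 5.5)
We differentiate the localized energy directly and use the equation in the form $e^{4u} f_t + \Delta^2 f = B$, with $B \perp T_f N$ as in \eqref{B perp}, while $f_t \in T_f N$. The constants in this section are meant for balls of a fixed small radius, so we assume $\varphi$ is supported in a geodesic ball $B_R$ on which $g_0$ is close to Euclidean. In particular $|\Delta \varphi|$ and $|\nabla^2\varphi|$ are comparable up to a factor $C$ there.

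The first step computes
\[
\frac{d}{dt}\int_M |\Delta f|^2 \varphi^4 = 2\int_M \langle \Delta f_t, \Delta f\rangle \varphi^4 .
\]
Integrating by parts twice to move the Laplacian onto $\Delta f \varphi^4$ gives
\[
2\int_M \langle f_t, \Delta^2 f\rangle \varphi^4 + 4\int_M \langle f_t, \Delta f\rangle \Delta(\varphi^4) + 8\int_M \langle \nabla f_t, \nabla \Delta f\rangle \cdot \varphi^3\nabla\varphi\cdot(\text{sign}).
\]
We do not want $\nabla\Delta f$ to appear, since the right-hand side of \eqref{loc E} contains no such term. So we integrate by parts only once, moving one derivative off $\Delta f_t$:
\[
2\int_M \langle \Delta f_t, \Delta f\rangle\varphi^4 = 2\int_M \langle f_t, \Delta^2 f\rangle \varphi^4 + 2\int_M \langle \nabla f_t,\nabla\Delta f\rangle\varphi^4 - 2\int_M \langle \nabla f_t, \nabla\Delta f\rangle \varphi^4 - 8\int_M\langle \nabla f_t,\Delta f\rangle\varphi^3\nabla\varphi + 8 \int_M \langle f_t, \nabla \Delta f\rangle \varphi^3\nabla\varphi .
\]
Done cleanly, the splitting
\[
\int_M\langle\Delta f_t,\Delta f\rangle\varphi^4=\int_M\langle f_t,\Delta^2 f\rangle\varphi^4+\int_M\langle f_t,\Delta f\rangle\Delta\varphi^4+2\int_M\langle f_t,\nabla\Delta f\rangle\nabla\varphi^4
\]
still contains $\nabla\Delta f$. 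We therefore instead write
\[
\int_M\langle\Delta f_t,\Delta f\rangle\varphi^4=\int_M\langle f_t,\Delta^2 f\rangle\varphi^4 + R,
\]
where the remainder $R$ is rearranged by one further integration by parts into
\[
R=-2\int_M\langle \nabla f_t,\Delta f\rangle\nabla\varphi^4-\int_M\langle f_t,\Delta f\rangle\Delta\varphi^4 .
\]
This form follows from $\Delta(\Delta f\,\varphi^4)=\Delta^2 f\varphi^4+2\nabla\Delta f\cdot\nabla\varphi^4+\Delta f\Delta\varphi^4$, combined with $\int\langle f_t,\nabla\Delta f\rangle\nabla\varphi^4=-\int\langle \nabla f_t,\Delta f\rangle\nabla\varphi^4-\int\langle f_t,\Delta f\rangle\Delta\varphi^4$.

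The second step uses the orthogonality: $\langle f_t,\Delta^2 f\rangle = \langle f_t, B - e^{4u}f_t\rangle = -e^{4u}|f_t|^2$. This produces $-2\int e^{4u}|f_t|^2\varphi^4$. The gradient term is bounded by $2\cdot 4\int|df_t||\Delta f|\varphi^3|\nabla\varphi|$, times the factor $2$ from $\frac{d}{dt}$, which gives at most the constant $16$.

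The third step handles the term $2\int\langle f_t,\Delta f\rangle\Delta(\varphi^4)$. We have $|\Delta\varphi^4|\le C\varphi^2(|\nabla\varphi|^2+\varphi|\Delta\varphi|)$. Writing $|f_t| = e^{2u}|f_t|\cdot e^{-2u}$ and applying Young's inequality, this term is at most
\[
\int_M e^{4u}|f_t|^2\varphi^4 + C\int_M e^{-4u}|\Delta f|^2(|\nabla\varphi|^4+\varphi^2|\Delta\varphi|^2).
\]
The first piece is absorbed, leaving the coefficient $1$ on the left-hand side. For the second piece, the remark after \eqref{e 4u} gives $e^{-4u}\le e^{4at}$, which yields the stated form.

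The main point needing care is the bookkeeping in the first step. The integration by parts must be arranged so that no $\nabla\Delta f$ term survives and only $|df_t||\Delta f|$ and $|f_t||\Delta f|$ remain, with the numerical constant fitting $16$. This requires the double integration by parts on the cutoff terms described above, and the sign and factor counts should be checked there.
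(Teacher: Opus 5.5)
Your argument is correct and is essentially the paper's proof. The identity
$\int\langle\Delta f_t,\Delta f\rangle\varphi^4=\int\langle f_t,\Delta^2 f\rangle\varphi^4-2\int\langle\nabla f_t,\Delta f\rangle\cdot\nabla\varphi^4-\int\langle f_t,\Delta f\rangle\Delta\varphi^4$
is exactly what the paper gets by integrating $\int e^{4u}|f_t|^2\varphi^4=-\int\langle\Delta^2 f,f_t\rangle\varphi^4$ by parts twice; you simply read it from the time-derivative side. After that you use the same orthogonality $B\perp T_fN$, the same Young's inequality with weights $e^{\pm 2u}$, and the bound $e^{-4u}\le e^{4at}$, and the constant $16$ comes out the same way. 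The exploratory displays before your clean formula for $R$, and the remark comparing $|\Delta\varphi|$ with $|\nabla^2\varphi|$, are unnecessary and can be dropped.
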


\begin{proof}
From \eqref{eq1} and because of \eqref{B perp}, we have
\[
e^{4u} |f_t|^2 = -\langle \Delta^2 f, f_t \rangle.
\]
Now multiply with $\varphi^4$ and integrating, we get
\[
\begin{split}
\int_{M} e^{4u}|f_t|^2 \varphi^4 =& - \int_{M}  \langle \Delta^2 f ,f_t \rangle \varphi^4= \int_{M} \langle \nabla \Delta f, d f_t \rangle \varphi^4 + 4 \int_{M} \langle \nabla \Delta f, f_t \rangle \varphi^{3} \nabla \varphi\\
=& -\int_{M} \langle \Delta f, \Delta f_t \rangle \varphi^4  -8 \int_{M} \langle \Delta f, d f_t \rangle \varphi^3 \nabla \varphi\\
&- 12 \int_{M} \langle \Delta f, f_t \rangle \varphi^2 |\nabla \varphi|^2 - 4 \int_{M} \langle \Delta f, f_t \rangle \varphi^3 \Delta \varphi\\
%\leq& -\frac{d}{dt} \frac{1}{2} \int_{M} |\Delta f|^2 \varphi^4 + 8 \int_{M} \langle \nabla \Delta f,  f_t \rangle \varphi^3 \nabla \varphi\\
%& + 12 \int_{M} \langle \Delta f, f_t \rangle \varphi^2 |\nabla \varphi|^2 + 4 \int_{M} \langle \Delta f, f_t \rangle \varphi^3 \Delta \varphi \\
\leq& -\frac{d}{dt} \frac{1}{2} \int_{M} |\Delta f|^2 \varphi^4 + 8 \int_{M} |\Delta f| |df_t| \varphi^3 |\nabla \varphi| \\
& + \frac{1}{2} \int_{M} e^{4u} |f_t|^2 \varphi^4 + C \int_{M} e^{-4u}|\Delta f|^2 \left( |\nabla \varphi|^4 + \varphi^2 |\Delta \varphi|^2 \right)
\end{split}
\]
which completes the proof.
\end{proof}

Note that we can also obtain another version of local energy estimate as below:
\begin{equation} \label{loc E 2}
\begin{split}
\int_{M} e^{4u}|f_t|^2 \varphi^4 + \frac{d}{dt} \int_{M}  |\Delta f|^2 \varphi^4 \leq& C e^{4at} \int_{M} |\nabla \Delta f|^2  \varphi^2 |\nabla \varphi|^2 + C e^{4at} \int_{M}  |\Delta f|^2 \left( |\nabla \varphi|^2 + |\Delta \varphi| \right)^2.
\end{split}
\end{equation}
Its proof is similar to above, where we apply another integration by parts to the term $\int_{M} \langle \Delta f, df_t \rangle \varphi^3 \nabla \varphi$ to remove spatial derivative of the factor $df_t$.

%Next, we show the derivative estimate for $p=0$.
%Later we extend it to the general version for $p>0$.
Next, we show the derivative estimate.
This can be considered as $p=0$ version of more general estimates, like in \cite{P22} and \cite{P25}.
But in biharmonic map flow case, we can only get the version below and not a general version with higher $p$.
For example, if we try to estimate the time derivative of $\int e^{4u} |f_t|^{p+2}$, on RHS, a good term (with negative sign) is $\int |\Delta f_t|^2 |f_t|^p$, while one of the trouble terms that we ultimately need to control is $p \int |df_t|^4 |f_t|^{p-2}$.
And we do not have local control of $p \int |df_t|^4 |f_t|^{p-2}$ in terms of $\int |\Delta f_t|^2 |f_t|^p$ except $p=0$.

\begin{prop} \label{Der p=0}
%(Derivative estimate for $p=0$)
(Derivative estimate)

Let $(f,u)$ be a smooth solution of \eqref{eq1} on $M \times [t_1,t_2]$.
Then
\begin{equation} \label{Der p=0 eq}
\begin{split}
\frac{d}{dt}  \int_{B_R} &  e^{4u}|f_t|^2 \varphi^4  + \int_{B_R} |\Delta f_t|^2 \varphi^4 + C_b \int_{B_R} |f_t|^2 (|\nabla df|^2 + |df|^4) \varphi^4\\
 \leq& 4a \int_{B_R} e^{4u}|f_t|^2 \varphi^4 %
 %+ C \int_{B_R} |\nabla f_t|^2 \varphi^2 |\nabla \varphi|^2 
 + C \int_{B_R} |f_t|^2 (\varphi^4 + |\nabla \varphi|^4 + \varphi^2 |\Delta \varphi|^2) .
\end{split}
\end{equation}
Here $C$ only depends on the Ricci curvature of $g_0$ and $C_N$ and $C_b$ depends on $b$, the Ricci curvature of $g_0$, and $C_N$.
And $C_b > 0$ for sufficiently large $b$.
%$ = 4b - C$ for some constant $C$ only depends on Ricci curvature of $g_0$ and $C_N$.
\end{prop}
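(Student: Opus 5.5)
The plan is to differentiate the weighted quantity and use the equation for $u$ to produce the good term $C_b\int|f_t|^2(|\nabla df|^2+|df|^4)\varphi^4$ from the metric factor. Write the first equation of \eqref{eq1} as $e^{4u}f_t=-\Delta^2 f+B$, with $B\perp T_fN$ as in \eqref{B perp}. Differentiating in $t$ gives $(e^{4u}f_t)_t=-\Delta^2 f_t+B_t$. The key rewriting is
\[
\frac{d}{dt}\int_{B_R} e^{4u}|f_t|^2\varphi^4 = 2\int_{B_R}\langle (e^{4u}f_t)_t,f_t\rangle\varphi^4 - 4\int_{B_R}u_t e^{4u}|f_t|^2\varphi^4 .
\]
Substituting $e^{4u}u_t=b(|\nabla df|^2+|df|^4)-ae^{4u}$, the last integral becomes $-4b\int|f_t|^2(|\nabla df|^2+|df|^4)\varphi^4+4a\int e^{4u}|f_t|^2\varphi^4$. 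This already gives the $4a$ term on the right-hand side and a good term $4b(\dots)$ on the left. It remains to show that all other contributions are bounded by $\frac12\int|\Delta f_t|^2\varphi^4+C\int|f_t|^2(|\nabla df|^2+|df|^4)\varphi^4$ plus cutoff and lower-order terms. Then $C_b=4b-C$, which is positive for large $b$.

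For the principal term, integrate $-2\int\langle\Delta^2 f_t,f_t\rangle\varphi^4$ by parts twice. Using \eqref{com 3}–\eqref{com 5} (derivatives commute with $\nabla_t$ under the embedding), this yields $-2\int|\Delta f_t|^2\varphi^4$ plus cutoff terms of the form $\int|\Delta f_t||df_t|\varphi^3|\nabla\varphi|$ and $\int|\Delta f_t||f_t|(\varphi^2|\nabla\varphi|^2+\varphi^3|\Delta\varphi|)$. After Young's inequality these are absorbed into $\delta\int|\Delta f_t|^2\varphi^4$ plus $\int|df_t|^2\varphi^2|\nabla\varphi|^2$. The latter is handled by \eqref{df_t nabla phi}, producing $C\int|f_t|^2(|\nabla\varphi|^4+\varphi^2|\Delta\varphi|^2)$, with $|\nabla^2\varphi|$ treated in the same way. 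Any leftover $\int|df_t|^2\varphi^4$ is interpolated between $\int|\Delta f_t|^2\varphi^4$ and $\int|f_t|^2\varphi^4$; this is the origin of the $\varphi^4$ term in the constant part.

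For $2\int\langle B_t,f_t\rangle\varphi^4$, I would expand $B_t$ term by term, using $|\partial_t P|\le C_N|f_t|$, $|\nabla P_t|\le C(|df_t|+|df||f_t|)$ and $|\Delta P_t|\le C(|\Delta f_t|+|\nabla df||f_t|+|df||df_t|+|df|^2|f_t|)$, and similarly for $A$ and $DA$. In each divergence-form piece, namely $\Delta(\cdot)_t$ and $\nabla\langle\cdot\rangle_t$, the derivatives are moved onto $f_t\varphi^4$ so that at most $\Delta f_t$ or $df_t$ falls on the test side. The typical resulting integrands are $|\Delta f_t|(|\nabla df||f_t|+|df||df_t|+|df|^2|f_t|)$ and $|\Delta f||\Delta f_t||f_t|$. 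By Young these go into $\delta|\Delta f_t|^2$, $C|f_t|^2(|\nabla df|^2+|df|^4)$ (note $|\Delta f|\le 2|\nabla df|$) and $C|df|^2|df_t|^2$. The last is controlled by \eqref{df^2 df_t^2}, whose right-hand side is again of the allowed form.

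The main obstacle is the quadratic term $\int|\Delta f||df_t|^2\varphi^4$ (equivalently $|\nabla df||df_t|^2$), which comes from $\langle\Delta f,\Delta P_t\rangle$ and $\nabla\langle\Delta f,\nabla P_t\rangle$. It is not directly of the form $|f_t|^2(\dots)$. I would first try to exploit the orthogonality $B\perp T_fN\ni f_t$, i.e.\ $\langle B_t,f_t\rangle=-\langle B,\partial_t f_t\rangle$-type cancellations, to remove it. If that fails, I would integrate by parts once more, writing $\nabla df * df_t * df_t$ as $df*\nabla df_t*df_t$ plus cutoff terms. Then Young gives $\delta\int|\nabla df_t|^2\varphi^4+C\int|df|^2|df_t|^2\varphi^4$, and these are closed by \eqref{nabla sq f_t 0} and \eqref{df^2 df_t^2}. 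Finally, choose $\delta$ small and collect terms, obtaining \eqref{Der p=0 eq}.
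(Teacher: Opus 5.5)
Your proposal is correct and is essentially the paper's argument. It uses the same identity for $\langle (e^{4u}f_t)_t,f_t\rangle$ with $e^{4u}u_t$ substituted and two integrations by parts on the principal term. For the bad term $\int|\Delta f|\,|df_t|^2\varphi^4$, your fallback of one more integration by parts, closed via \eqref{nabla sq f_t 0} and \eqref{df^2 df_t^2}, is exactly what the paper does. That term actually arises only from $\nabla\langle\Delta f,\nabla P\rangle_t$, since $\langle\Delta f,\Delta P\rangle_t$ is paired directly with $f_t$ and causes no trouble.

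Your first idea would also have worked and is a bit cleaner. Since $\langle B,f_t\rangle\equiv 0$, one has $\langle B_t,f_t\rangle=-\langle B,f_{tt}^{\perp}\rangle$ with $|f_{tt}^{\perp}|\le C_N|f_t|^2$ (differentiate $P(f)f_t=f_t$ in $t$). Moving the derivatives in $B$ onto $f_{tt}^{\perp}\varphi^4$ then yields only admissible terms and no $|\nabla df_t|^2$ at all.
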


\begin{proof}
From the first equation in \eqref{eq1}, take time derivative and get
\begin{equation} \label{tau_t}
\begin{split}
(e^{4u}f_t)_t =& -\left[ \Delta^2 f \right]_t + \left[ \Delta (A(df,df)) \right]_t - \left[ \langle \Delta f, \Delta P \rangle \right]_t + 2 \left[ \nabla \langle \Delta f, \nabla P \rangle \right]_t.
\end{split}
\end{equation}
Take inner product with $f_t \varphi^4$ and integrate to get
\[
\begin{split}
\int_{B_R} \langle (e^{4u} f_t)_t, f_t \rangle  \varphi^4 %=& -\int_{B_R} \langle \left[\Delta^2 f \right]_t, f_t  \rangle \varphi^4 + \int_{B_R} \langle \left[ \Delta (A(df,df)) \right]_t , f_t \rangle \varphi^4 - \int_{B_R} \langle \left[ \langle \Delta f, \Delta P \rangle \right]_t , f_t \rangle \varphi^4\\
%&+ \int_{B_R} \langle 2 \left[ \nabla \langle \Delta f, \nabla P \rangle \right]_t, f_t \rangle \varphi^4\\
&= I + II + III + IV.
\end{split}
\]
Each term can be estimate as follows. 
%Here $\delta_1, \delta_2, \cdots$ are (different) small positive numbers to be determined later.

For $I$, we have
\[
\begin{split}
I =& \int_{B_R} \langle  \nabla_i \Delta f_t, \nabla_i f_t  \varphi^4 +  4 f_t  \varphi^3 \nabla_i \varphi \rangle \\
=& -\int_{B_R} |\Delta f_t|^2 \varphi^4 -  \int_{B_R}  \langle \Delta f_t, 8 \nabla_i f_t  \varphi^3 \nabla_i \varphi + 12 f_t \varphi^2 |\nabla \varphi|^2 + 4 f_t  \varphi^3 \Delta \varphi \rangle\\
\leq& -(1-\delta) \int_{B_R} |\Delta f_t|^2 \varphi^4 + C \int_{B_R} |f_t|^2 (|\nabla \varphi|^4 + \varphi^2 |\Delta \varphi|^2) + |df_t|^2 \varphi^2 |\nabla \varphi|^2.
\end{split}
\]

For $II$, note that
\[
\begin{split}
\left[ \Delta (A(df,df)) \right]_t  =& \Delta \left( 2 A(df_t,df) + DA(df,df) \cdot f_t \right)\\
=& \nabla_i \left(  2 A(\nabla_i df_t, df) + 2 A(df_t, \nabla_i df) + 2DA(df_t,df) \cdot \nabla_i f \right)\\
& +\nabla_i \left( 2DA(\nabla_i df, df) \cdot f_t+ D^2A(df,df) \cdot f_t \cdot \nabla_i f + DA(df,df) \cdot \nabla_i f_t \right).
\end{split}
\]
Hence, by integration by parts, we get
\[
\begin{split}
II =& -\int_{B_R} \langle 2A(\nabla_i df_t,df) + 2A(df_t,\nabla_i df) + 2DA(df_t,df) \cdot \nabla_i f, \nabla_i f_t \varphi^4 + f_t \varphi^3 \nabla_i \varphi \rangle\\
&- \int_{B_R} \langle 2DA(\nabla_i df, df) \cdot f_t + D^2A(df,df) \cdot f_t \cdot \nabla_i f + DA(df,df) \cdot \nabla_i f_t,  \nabla_i f_t \varphi^4 + f_t \varphi^3 \nabla_i \varphi \rangle.
\end{split}
\]
Here we would like to avoid the term $\int |df_t|^2 |\nabla df| \varphi^4$.
So, apply integration by parts again for that term.
\[
\begin{split}
-\int_{B_R} \langle &2A(df_t,\nabla_i df), \nabla_i f_t \varphi^4 \rangle\\
 =& \int_{B_R} \langle 2A(\nabla_i df_t, df), \nabla_i f_t \varphi^4 \rangle + \int_{B_R} \langle 2DA(df_t,df) \cdot \nabla_i f, \nabla_i f_t \varphi^4 \rangle\\
&+ \int_{B_R} \langle 2A(df_t,df), \Delta f_t \varphi^4 \rangle + 4\int_{B_R} \langle 2A(df_t,df), \nabla_i f_t \varphi^3 \nabla_i \varphi \rangle\\
\leq& C \int_{B_R} |\nabla df_t| |df_t| |df| \varphi^4 + C \int_{B_R} |df_t|^2 |df|^2 \varphi^4 + C \int_{B_R} |\Delta f_t| |df_t| |df| \varphi^4\\
& + C \int_{B_R} |df_t|^2 |df| \varphi^3 |\nabla \varphi|.
\end{split}
\]
Considering this inequality with other terms, we obtain
\[
\begin{split}
II \leq &C \int_{B_R}  ( |\nabla df_t| |df| + |df_t| |df|^2 + |\nabla df| |df| |f_t| + |df|^3 |f_t| ) \cdot ( |df_t| \varphi^4 + |f_t| \varphi^3 |\nabla \varphi| )\\
&+ C \int_{B_R} |\nabla df_t| |df_t| |df| \varphi^4 + C \int_{B_R} |df_t|^2 |df|^2 \varphi^4 + C \int_{B_R} |df_t|^2 |df| \varphi^3 |\nabla \varphi|\\
\leq& \delta \int_{B_R} |\nabla df_t|^2 \varphi^4 + C \int_{B_R} \left( |df_t|^2 |df|^2  + |f_t|^2 |\nabla df|^2  + |f_t|^2 |df|^4 \right) \varphi^4\\
&+ C \int_{B_R} \left(  |f_t|^2 |df|^2 + |df_t|^2 \right) \varphi^2 |\nabla \varphi|^2\\
\leq& \delta \int_{B_R} |\nabla df_t|^2 \varphi^4 + C \int_{B_R} |df|^2 |df_t|^2 \varphi^4\\
&+ C \int_{B_R} |f_t|^2 (|df|^4 + |\nabla df|^2) \varphi^4 + C \int_{B_R} \left(  |f_t|^2 |df|^2 + |df_t|^2 \right) \varphi^2 |\nabla \varphi|^2.
\end{split}
\]

To analyze $III$, note that
\[
\begin{split}
\langle \Delta f, \Delta P \rangle =& A(\Delta f, \Delta f) + DA(df,\Delta f) \cdot df.
%\langle \nabla \Delta f, \nabla P \rangle =& \nabla \langle \Delta f, \nabla P \rangle - \langle \Delta f, \Delta P \rangle.
\end{split}
\]
So, we have
\[
\begin{split}
III =& -\int_{B_R} \langle 2A(\Delta f_t, \Delta f), f_t \rangle \varphi^4 - \int_{B_R} \langle DA(\Delta f, \Delta f) \cdot f_t, f_t \rangle \varphi^4\\
& - \int_{B_R} \langle DA(df_t, \Delta f) \cdot df, f_t \rangle \varphi^4 - \int_{B_R} \langle DA(df, \Delta f_t) \cdot df, f_t \rangle \varphi^4\\
&- \int_{B_R} \langle DA(df, \Delta f) \cdot df_t, f_t \rangle \varphi^4 - \int_{B_R} \langle D^2 A(df, \Delta f) \cdot df \cdot f_t, f_t \rangle \varphi^4\\
\leq& \delta \int_{B_R} |\Delta f_t|^2 \varphi^4 + C \int_{B_R} |df|^2 | df_t|^2 \varphi^4 +  C \int_{B_R} |f_t|^2 (|df|^4 + |\Delta f|^2) \varphi^4.
\end{split}
\]
For $IV$, first note that
\[
\begin{split}
\langle \Delta f, \nabla P \rangle =& A(\Delta f, df)\\
\left[ \nabla \langle \Delta f, \nabla P \rangle \right]_t %=& \nabla \left[ \langle \Delta f, \nabla P \rangle \right]_t + {}^N\!\! R(f_t, df) A(\Delta f, df) \\
 =& \nabla \left( A(\Delta f_t, df) + A(\Delta f, df_t) + DA(\Delta f, df) \cdot f_t \right) %+ {}^N\!\! R(f_t, df) A(\Delta f, df)
\end{split}
\]
and apply integration by parts to get
\[
\begin{split}
IV =& -2 \int_{B_R} \langle A(\Delta f_t, df) + A(\Delta f, df_t) + DA(\Delta f, df) \cdot f_t, df_t \varphi^4 + 4 f_t \varphi^3 \nabla \varphi \rangle.
\end{split}
\]
Again, we would like to avoid the term $\int |\Delta f| |df_t|^2 \varphi^4$.
So, apply integration by parts again to get
\[
\begin{split}
-2 \int_{B_R} \langle &A(\Delta f, df_t), df_t \varphi^4 \rangle\\
 =& 2\int_{B_R} \langle A(\nabla_i f, \nabla_i df_t),df_t \varphi^4 \rangle + 2 \int_{B_R} \langle DA(\nabla_i f, df_t) \cdot \nabla_i f, df_t \varphi^4 \rangle\\
&+ 2 \int_{B_R} \langle A(\nabla_i f, df_t), \nabla_i df_t \varphi^4 \rangle + 8 \int_{B_R} \langle A(\nabla_i f, df_t), df_t \varphi^3 \nabla_i \varphi \rangle\\
\leq& C \int_{B_R} |\nabla df_t| |df_t| |df| \varphi^4 + C \int_{B_R} |df_t|^2 |df|^2 \varphi^4 + C \int_{B_R} |df_t|^2 |df| \varphi^3 |\nabla \varphi|.
\end{split}
\]
Considering this estimate with other terms, we obtain
\[
\begin{split}
IV %=& 2\int_{B_R} \langle \nabla (\langle \Delta f, \nabla P \rangle)_t, f_t \rangle \varphi^4\\
% 2 \int_{B_R} \langle A(df, \nabla \Delta f_t),f_t \rangle \varphi^4 + 2 \int_{B_R} \langle A(df_t,\nabla \Delta f),f_t \rangle \varphi^4\\
%&+ 2 \int_{B_R} \langle DA(df, \nabla \Delta f) \cdot f_t, f_t \rangle \varphi^4\\
%=& -2 \int_{B_R} \langle A(\Delta f_t, df) + A(\Delta f, df_t) + DA(\Delta f, df) \cdot f_t, df_t \varphi^4 + 4 f_t \varphi^3 \nabla \varphi \rangle\\
%&+ 2 \int_{B_R} \langle {}^N\!\! R(f_t, df) A(\Delta f, df), f_t \rangle \varphi^4\\
\leq& \delta \int_{B_R} |\Delta f_t|^2 \varphi^4 + C  \int_{B_R} |df|^2 |d f_t|^2 \varphi^4 \\
&+ C \int_{B_R} |f_t|^2 (|df|^4 + |\Delta f|^2) \varphi^4 + C \int_{B_R} (|f_t|^2 |df|^2 + |df_t|^2) \varphi^2 |\nabla \varphi|^2.
\end{split}
\]

Note that for $\int |df|^2 |df_t|^2 \varphi^2$ and $\int |df_t|^2 \varphi^2 |\nabla \varphi|^2$, we use \eqref{df^2 df_t^2} and \eqref{df_t nabla phi}.

Combining all together with $\delta$ small enough, % and by \eqref{nabla sq 0},
we get
\[
\begin{split}
RHS \leq& -\frac{1}{2} \int_{B_R} |\Delta f_t|^2 \varphi^4+ \frac{1}{2}C_1 \int_{B_R} |f_t|^2 ( |df|^4 + |\nabla df|^2) \varphi^4\\
& + C \int_{B_R} |f_t|^2 (\varphi^4 + |\nabla \varphi|^4 + \varphi^2 |\nabla^2 \varphi|^2)
\end{split}
\]
where $C, C_1$ only depends on the Ricci curvature of $g_0$ and $C_N$.

On the other hand, left-hand side becomes
\[
\begin{split}
\int_{B_R} \langle (e^{4u} f_t)_t, f_t \rangle \varphi^4 =&  \int_{B_R} e^{4u} \langle f_{tt}, f_t \rangle \varphi^4 + 4 \int_{B_R} e^{4u} u_t |f_t|^2 \varphi^4\\
=& \frac{d}{dt} \frac{1}{2} \int_{B_R} e^{4u} |f_t|^2 \varphi^4 + 2b \int_{B_R} |f_t|^2 (|\nabla df|^2 + |df|^4) \varphi^4 - 2a \int_{B_R} e^{4u} |f_t|^2 \varphi^4.
\end{split}
\]
Hence we have
\[
\begin{split}
\frac{d}{dt}  \int_{B_R} & e^{4u}|f_t|^2 \varphi^4 +  \int_{B_R} |\Delta f_t|^2 \varphi^4 + (4b - C_1) \int_{B_R} |f_t|^2 (|\nabla df|^2 + |df|^4) \varphi^4\\
 \leq& 4a \int_{B_R} e^{4u}|f_t|^2 \varphi^4 + C \int_{B_R} |f_t|^2 (\varphi^4 + |\nabla \varphi|^4 + \varphi^2 |\nabla^2 \varphi|^2).
\end{split}
\]
The proof is complete with $C_b = 4b - C_1$.
\end{proof}

Form now on, we assume $b$ large enough so that
\begin{equation}
C_b = 4b-C_1 > 0.
\end{equation}

Then \Cref{Der p=0} implies, using $e^{-4u} \leq e^{4at}$,
\begin{equation} \label{Der p=0 cor}
\begin{split}
\frac{d}{dt} \int_{B_R} e^{4u}|f_t|^2 \varphi^4 +  \int_{B_R} |\Delta f_t|^2 \varphi^4 \leq&4a \int_{B_R} e^{4u}|f_t|^2 \varphi^4 +  C e^{4at} (1 + \frac{1}{R^4}) \int_{B_R} e^{4u} |f_t|^2.
\end{split}
\end{equation}
This leads to the following finer version of local energy estimate.

\begin{prop} \label{loc E finer}
(Local energy estimate - finer version)

Under the same assumption of \Cref{Der p=0} with $C_b>0$,
\begin{equation}
\begin{split}
&\int_{B_R} e^{4u}|f_t|^2 \varphi^4(t)+ 4a \int_{B_R} |\Delta f|^2 \varphi^4(t)\\
\leq& K_1(t_1) +C (t_2-t_1) e^{4at_2} (1 + \frac{1}{R^4})  \left( K_1(t_1) + C e^{4at_2} (1 + \frac{1}{R^4}) (1+t_2) E(0) \right)
\end{split}
\end{equation}
where
\begin{equation}
K_1(t_1) :=  \int_{B_{2R}} e^{4u}|f_t|^2 (t_1) + 4a \int_{B_{2R}} |\Delta f|^2  (t_1).
\end{equation}
\end{prop}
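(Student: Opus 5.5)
The plan is to add the derivative estimate \eqref{Der p=0 cor} to $4a$ times the local energy estimate of \Cref{loc E lem}, so that the bad term $4a\int e^{4u}|f_t|^2\varphi^4$ on the right of \eqref{Der p=0 cor} is absorbed by the good term $4a\int e^{4u}|f_t|^2\varphi^4$ on the left of \eqref{loc E}. Set
\[
Y(t) = \int_{B_R} e^{4u}|f_t|^2\varphi^4 + 4a\int_{B_R}|\Delta f|^2\varphi^4 .
\]
Adding the two inequalities gives
\[
Y'(t) + \int_{B_R}|\Delta f_t|^2\varphi^4 \le 64a\int_{B_R}|\Delta f||df_t|\varphi^3|\nabla\varphi| + C e^{4at}(1+R^{-4})\Big(\int_{B_R}e^{4u}|f_t|^2 + \int_{B_R}|\Delta f|^2\Big),
\]
using $|\nabla\varphi|\le C/R$, $|\Delta\varphi|\le C/R^2$ and $e^{-4u}\le e^{4at}$.

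The cross term $\int|\Delta f||df_t|\varphi^3|\nabla\varphi|$ is the main obstacle, since $df_t$ is not directly controlled. I would bound it by Cauchy--Schwarz as $\delta\int|df_t|^2\varphi^2|\nabla\varphi|^2 + C_\delta\int|\Delta f|^2\varphi^4$. Then apply \eqref{df_t nabla phi} to get
\[
\int|df_t|^2\varphi^2|\nabla\varphi|^2 \le \delta'\int|\Delta f_t|^2\varphi^4 + C R^{-4}\int|f_t|^2 .
\]
The first piece is absorbed by the good $\int|\Delta f_t|^2\varphi^4$ term on the left. The second is bounded by $C e^{4at}R^{-4}\int e^{4u}|f_t|^2$. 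Alternatively, one could use \eqref{loc E 2} and control $\int|\nabla\Delta f|^2\varphi^2|\nabla\varphi|^2$ via \Cref{loc W32 lem}, but the first route seems cleaner.

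This leaves
\[
Y'(t) \le C e^{4at_2}(1+R^{-4})\Big(\int_M e^{4u}|f_t|^2 + \int_M|\Delta f|^2\Big).
\]
Integrate from $t_1$ to $t$. The term $\int_M|\Delta f|^2$ is at most $2E(0)$ by \Cref{E dec}. For $\int e^{4u}|f_t|^2$ there are two options. The first is to use \Cref{E dec}, which gives $\int_{t_1}^{t_2}\int_M e^{4u}|f_t|^2\le E(0)$. The second, which matches the stated form, is to apply \eqref{Der p=0 cor} with a cutoff equal to $1$ on $B_R$ and supported in $B_{2R}$, so that $\int_{B_R}e^{4u}|f_t|^2(s)$ is controlled by $K_1(t_1)$ plus $C e^{4at_2}(1+R^{-4})(1+t_2)E(0)$. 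Here the $(1+t_2)$ factor comes from integrating the $e^{4u}|f_t|^2$ source in time and using energy decay. Inserting this bound, together with $Y(t_1)\le K_1(t_1)$ since $\varphi\le 1$ and $B_R\subset B_{2R}$, yields exactly the claimed form with factor $(t_2-t_1)$.

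The delicate point is bookkeeping: the $4a$ growth term in \eqref{Der p=0 cor} must be cancelled by the local energy dissipation rather than handled by Gronwall, since Gronwall would give an extra $e^{4a(t_2-t_1)}$. The constants $e^{4at_2}$ and $1+R^{-4}$ then appear as stated.
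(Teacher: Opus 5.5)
Your proposal is correct and follows the same route as the paper. You combine $4a$ times \eqref{loc E} with \eqref{Der p=0 cor} and absorb the cross term $\int|\Delta f||df_t|\varphi^3|\nabla\varphi|$ into $\int|\Delta f_t|^2\varphi^4$ to get \eqref{loc gronwall}; you then bound the unlocalized right-hand side pointwise in time via a first pass with a cutoff on $B_{2R}$ and energy decay, and integrate \eqref{loc gronwall} on $B_R$ to produce the factor $(t_2-t_1)$.

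The differences are minor. In the first pass you integrate \eqref{Der p=0 cor} alone, which does give $\int_{B_R}e^{4u}|f_t|^2(s)\le K_1(t_1)+Ce^{4at_2}(1+R^{-4})E(0)$ since the $4a$ term is also bounded by $E(t_1)-E(s)$. You also use the global bound $\int_M|\Delta f|^2\le 2E(0)$ in place of the paper's combined inequality. Finally, keep the right-hand side of your differential inequality as integrals over $B_R$ rather than $M$, since only the former is controlled by $K_1(t_1)$.
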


\begin{proof}
Let $\psi$ be a cut-off function on $B_{2R}$ with $\psi \equiv 1$ on $B_R$.
From \Cref{loc E lem}, we have
\[
\int_{B_{2R}} e^{4u} |f_t|^2 \psi^4 + \frac{d}{dt} \int_{B_{2R}} |\Delta f|^2 \psi^4 \leq C \int_{B_{2R}} |\Delta f| |df_t| \psi^3 |\nabla \psi| + C e^{4at}\frac{1}{R^4}  \int_{B_{2R}} |\Delta f|^2.
\]
The first term in RHS can be estimated by
\[
\int_{B_{2R}} |\Delta f| |df_t| \psi^3 |\nabla \psi| \leq \int_{B_{2R}} |\Delta f|^2 \psi^2 |\nabla \psi|^2 + C \int_{B_{2R}} |df_t|^2 \psi^4
\]
where the second term is, by integration by parts and by \eqref{df_t nabla phi},
\[
\begin{split}
\int_{B_{2R}} |df_t|^2 \psi^4 =& -\int_{B_{2R}} \langle \Delta f_t, f_t \rangle \psi^4 - 4 \int_{B_{2R}} \langle d f_t, f_t \rangle \psi^3 \nabla \psi\\
\leq& \delta \int_{B_{2R}} |\Delta f_t|^2 \psi^4 + C \int_{B_{2R}} |f_t|^2 \psi^4 + C \int_{B_{2R}} |df_t|^2 \psi^2 |\nabla \psi|^2\\
\leq& 2\delta \int_{B_{2R}} |\Delta f_t|^2 \psi^4 + C \int_{B_{2R}} |f_t|^2 ( \psi^4 + |\nabla \psi|^4 + \psi^2 |\nabla^2 \psi|^2).
\end{split}
\]

Combining this with \eqref{Der p=0 cor}, with $\delta$ small enough, we obtain that
\begin{equation} \label{loc gronwall}
\frac{d}{dt} \left( \int_{B_{2R}} e^{4u}|f_t|^2 \psi^4 + 4a \int_{B_{2R}} |\Delta f|^2 \psi^4 \right) \leq C e^{4at_2} (1 + \frac{1}{R^4}) \left( \int_{B_{2R}} e^{4u}|f_t|^2 + 4a \int_{B_{2R}} |\Delta f|^2 \right).
\end{equation}
Integrating over $[t_1,t]$, and using \Cref{E dec}, we get
\[
\begin{split}
&\int_{B_R} e^{4u}|f_t|^2 (t)+ 4a \int_{B_R} |\Delta f|^2 (t) \\
 \leq& \int_{B_{2R}} e^{4u}|f_t|^2 \psi^4(t_1) + 4a \int_{B_{2R}} |\Delta f|^2 \psi^4 (t_1) + C e^{4at_2} (1 + \frac{1}{R^4})  \left( E(t_1) - E(t) + (t-t_1) E(0) \right)\\
 \leq& K_1(t_1) + C e^{4at_2} (1 + \frac{1}{R^4}) (1+t_2) E(0)
\end{split}
\]
where we denote
\[
K_1(t_1) :=  \int_{B_{2R}} e^{4u}|f_t|^2 (t_1) + 4a \int_{B_{2R}} |\Delta f|^2  (t_1).
\]
Integrating \eqref{loc gronwall} again on $B_R$ with cut-off function $\varphi$ and using above bound, we get the conclusion.
\end{proof}

For simplicity, denote $K_2$ depending on $K_1(t_1), t_2, t_2-t_1, R, E(0)$ by
\[
K_2 := K_1(t_1) +C (t_2-t_1) e^{4at_2} (1 + \frac{1}{R^4})  \left( K_1(t_1) + C e^{4at_2} (1 + \frac{1}{R^4}) (1+t_2) E(0) \right).
\]
Then \Cref{loc E finer} implies that,
\[
\int_{B_{R/2}} e^{4u}|f_t|^2 \leq K_2, \int_{B_{R/2}} |\Delta f|^2 \leq \frac{K_2}{4a}.
\]
%As we have a good control of $\int_{B_R} e^{4u}|f_t|^2 \varphi^4 \leq K_2$, we can derive further estimates for higher derivatives.

\begin{remark}
This is a key moment that the conformal heat flow method works better than usual heat flow of biharmonic maps.
In the usual biharmonic map flow, we {\bf cannot} control the local energy in terms of its initial data and the time difference in such a uniform way.
But in our case, if we choose $R$ small enough so that $K_1(t_1)$ is small, and choose $t_2-t_1$ small enough, then we can make $K_2$ as small as we want.

More precisely, we have the following corollary. 
\end{remark}

\begin{cor} \label{loc E cor}
Let $(f,u)$ be a smooth solution of \eqref{eq1} on $M \times [t_1,t_2]$.
For any $\ep>0$, there exists $R = R(\ep)$ and $T \in [t_1,t_2]$ such that
\begin{equation} \label{loc E est finer}
\sup_{t_1 \leq s \leq T} \int_{B_R} e^{4u}|f_t|^2(s), \sup_{t_1 \leq s \leq T} \int_{B_R} |\Delta f|^2 (s) \leq \ep.
\end{equation}
\end{cor}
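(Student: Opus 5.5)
The corollary should follow directly from \Cref{loc E finer} by making the quantity $K_2$ small. Since $(f,u)$ is smooth on $M \times [t_1,t_2]$, the function $e^{4u}|f_t|^2 + 4a|\Delta f|^2$ at time $t_1$ is a fixed smooth function on the compact manifold $M$, hence bounded. By absolute continuity of the integral there is therefore $R_0>0$ such that for every $x\in M$ and every $R\le R_0$,
\[
K_1(t_1) = \int_{B_{2R}(x)} e^{4u}|f_t|^2(t_1) + 4a\int_{B_{2R}(x)} |\Delta f|^2(t_1) \le \frac{\ep'}{2},
\]
where $\ep' := \min\{\ep, 4a\ep\}$. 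This fixes $R = R(\ep)$ depending only on $\ep$ and the data at $t_1$. The covering of $M$ by such balls is uniform because $M$ is compact.

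Next choose the time. With $R$ now fixed, apply \Cref{loc E finer} on $[t_1,T]$ in place of $[t_1,t_2]$ for any $T\in[t_1,t_2]$ (the solution restricts to this subinterval). This gives, for every $s\in[t_1,T]$,
\[
\int_{B_{R/2}} e^{4u}|f_t|^2(s) + 4a\int_{B_{R/2}}|\Delta f|^2(s) \le K_1(t_1) + C(T-t_1)e^{4at_2}\Big(1+\tfrac{1}{R^4}\Big)\Big(K_1(t_1) + Ce^{4at_2}\Big(1+\tfrac1{R^4}\Big)(1+t_2)E(0)\Big).
\]
In the last factor we bound $T\le t_2$, so the bracket is a constant $M_0$ depending only on $R$, $t_2$, $E(0)$ and the data at $t_1$. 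Now choose $T-t_1 \le \ep'/(2C e^{4at_2}(1+R^{-4})M_0)$, which makes the second term at most $\ep'/2$. The left-hand side is then at most $\ep'$, and dividing appropriately gives both bounds in \eqref{loc E est finer}. Finally rename $R/2$ as $R$ (equivalently, start with $4R\le R_0$).

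The main care point is the dependency order. $R$ must be chosen first, using only the time-$t_1$ data, and $T$ second, since the coefficient of $(T-t_1)$ blows up like $R^{-8}$ as $R\to0$. The other point to check is that the constants $C$ in \Cref{loc E finer} do not depend on the length of the interval beyond the explicit $t_2$ factors. They do not: they come from \eqref{loc gronwall}, where only $e^{4at_2}$ and $1+t_2$ appear. Shrinking the interval to $[t_1,T]$ only improves these factors, so the estimate applies uniformly for all $T\le t_2$.
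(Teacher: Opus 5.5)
Your proposal is correct and follows the paper's own argument: first fix $R$ from the time-$t_1$ data so that $K_1(t_1)$ on $B_{4R}$ is small, then shrink $T-t_1$ so that the second term of $K_2$ is small, and conclude via \Cref{loc E finer}. Your extra bookkeeping makes explicit what the paper leaves implicit: the factor $4a$ handled via $\ep'=\min\{\ep,4a\ep\}$, the shift between the radii $R/2$ and $2R$, and the uniformity of $R$ over centers.
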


\begin{proof}
For the first term, it is enough to show that we can make $K_2$ small enough.
First, we choose $R$ small enough so that $K_1(t_1)$ on $B_{4R}$ is less than $\ep$.
Now choose $T$ with $T-t_1$ small enough so that
\[
C (T-t_1) e^{4at_2} (1 + \frac{1}{R^4}) \left( K_1(t_1) + C e^{4at_2}(1 + \frac{1}{R^4}) (1+t_2) E(0) \right) \leq \ep.
\]
By \Cref{loc E finer}, this completes the proof.
\end{proof}

\begin{remark}
The choice of $T$ in the above depends on $t_1$ clearly, and also depends on $t_2$ by construction.
But the dependency on $t_2$ is in a way that smaller $t_2$ will allow to choose larger $T$.
%Since $t \leq t_2$, we can have an optimal possible $t$, which can be regarded as a uniform choice of such $t \leq t_2$ for any time period.
%Or, we can choose $t_2$ small enough so that $t$ above equals to $t_2$ in this case.
So, by choosing $t_2$ small enough, we can assume that $T=t_2$ in \eqref{loc E est finer}.
\end{remark}

In particular, good control for $\int e^{4u}|f_t|^2$ will boost higher estimates.

\begin{cor} \label{delta f_t lem}
Let $(f,u)$ be a smooth solution of \eqref{eq1} on $M \times [t_1,t_2]$ and $R = R(\ep)$ be chosen as in \Cref{loc E cor}.
Then
\begin{equation} \label{delta f_t}
\int_{t_1}^{t_2} \int_{B_R} |\Delta f_t|^2 \varphi^4, \int_{t_1}^{t_2} \int_{B_R} |f_t|^2 (|\nabla df|^2 + |df|^4) \varphi^4 \leq C_3 (1 + \frac{1}{R^4})\ep
\end{equation}
for some constant $C_3 = C_3(t_2)$.
\end{cor}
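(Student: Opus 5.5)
The plan is to integrate the derivative estimate of \Cref{Der p=0} in time over $[t_1,t_2]$ and bound every right-hand-side term by the smallness from \Cref{loc E cor}. Take $\varphi$ a cut-off supported in $B_R$ with $\varphi\equiv 1$ on $B_{R/2}$, $|\nabla\varphi|\le C/R$ and $|\nabla^2\varphi|,|\Delta\varphi|\le C/R^2$. By the remark after \Cref{loc E cor}, shrinking $t_2$ if needed, we may assume $T=t_2$, so that
\[
\sup_{t_1\le s\le t_2}\int_{B_R} e^{4u}|f_t|^2(s)\le \ep .
\]
The goal is a bound of the form $C_3(1+R^{-4})\ep$ on both space-time integrals in \eqref{delta f_t}.

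First I integrate \eqref{Der p=0 eq} over $[t_1,t_2]$. Since $C_b>0$, the left-hand side gives
\[
\int_{B_R} e^{4u}|f_t|^2\varphi^4(t_2)-\int_{B_R} e^{4u}|f_t|^2\varphi^4(t_1)+\int_{t_1}^{t_2}\int_{B_R}|\Delta f_t|^2\varphi^4+C_b\int_{t_1}^{t_2}\int_{B_R}|f_t|^2(|\nabla df|^2+|df|^4)\varphi^4 .
\]
The first term is nonnegative and can be dropped. The second is at most $\ep$.

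On the right-hand side, the term $4a\int e^{4u}|f_t|^2\varphi^4$ integrates to at most $4a(t_2-t_1)\ep$. For the cut-off term $C\int|f_t|^2(\varphi^4+|\nabla\varphi|^4+\varphi^2|\Delta\varphi|^2)$, I use $|f_t|^2=e^{-4u}e^{4u}|f_t|^2\le e^{4at}e^{4u}|f_t|^2$, which follows from \eqref{e 4u}. This is the same step that produced \eqref{Der p=0 cor}. It bounds the term pointwise in time by $Ce^{4at_2}(1+R^{-4})\ep$, and integrating in time gives $C(t_2-t_1)e^{4at_2}(1+R^{-4})\ep$.

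Collecting terms, both quantities are bounded by $\max(1,C_b^{-1})\bigl(1+C(t_2-t_1)e^{4at_2}\bigr)(1+R^{-4})\ep$, which is of the form $C_3(t_2)(1+R^{-4})\ep$.

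The main obstacle is making sure the smallness $\ep$ holds uniformly over the whole interval $[t_1,t_2]$, not just on $[t_1,T]$. This is exactly what the remark following \Cref{loc E cor} provides, by taking $t_2$ close to $t_1$. One also has to absorb the factor $e^{-4u}$ through the lower bound on $u$, at the cost of $e^{4at_2}$, which is why $C_3$ depends on $t_2$. A minor point is the dependence of $C_3$ on $b$ through $C_b^{-1}$; since $b$ is fixed, this is harmless.
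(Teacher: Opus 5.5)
Your proposal is correct and follows essentially the same route as the paper: integrate \eqref{Der p=0 eq} over $[t_1,t_2]$, drop the nonnegative endpoint term, and convert $|f_t|^2$ to $e^{4u}|f_t|^2$ via $e^{-4u}\le e^{4at}$ (as in \eqref{Der p=0 cor}). The remaining terms are then bounded by \eqref{loc E est finer}, with $T=t_2$ justified by the remark after \Cref{loc E cor}, which the paper's proof uses implicitly.
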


\begin{proof}
We integrate \eqref{Der p=0 eq} from $t_1$ to $t$ to get
\[
\begin{split}
&\int_{t_1}^{t} \int_{B_R} |\Delta f_t|^2 \varphi^4 + C_b \int_{t_1}^{t} \int_{B_R} |f_t|^2 (|\nabla df|^2 + |df|^4) \varphi^4\\
\leq& \int_{B_R} e^{4u} |f_t|^2 \varphi^4 (t_1) - \int_{B_R} e^{4u}|f_t|^2 \varphi^4 (t)\\
&+4a \int_{t_1}^{t} \int_{B_R} e^{4u}|f_t|^2 \varphi^4 + C e^{4at_2} (1+  \frac{1}{R^4}) \int_{t_1}^{t} \int_{B_R} e^{4u}|f_t|^2.
\end{split}
\]
Now from \eqref{loc E est finer}, we get the conclusion.
\end{proof}

From this, we can estimate the followings as well.
\begin{cor}
Let $(f,u)$ be a smooth solution of \eqref{eq1} on $M \times [t_1,t_2]$ and $R = R(\ep)$ be chosen as in \Cref{loc E cor}.
Then
\begin{equation} \label{df_t^2 df^2}
\begin{split}
\int_{t_1}^{t_2} \int_{B_R} |df_t|^2 |df|^2 \varphi^4, \int_{t_1}^{t_2} \int_{B_R} |df_t|^2 \varphi^4 \leq& C_4 (1 + \frac{1}{R^4}) \ep\\
\int_{t_1}^{t_2} \int_{B_R} |\nabla df_t|^2 \varphi^4 \leq& C_4 (1 + \frac{1}{R^6}) \ep
\end{split}
\end{equation}
for some constant $C_4 = C_4(t_2, t_2-t_1)$.
\end{cor}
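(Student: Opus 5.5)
The plan is to integrate in time the pointwise-in-time inequalities of \Cref{Int by parts}, and feed in the space-time bounds of \Cref{delta f_t lem} together with the uniform bounds of \Cref{loc E cor}. The only inputs needed are $\int_{t_1}^{t_2}\int |\Delta f_t|^2\varphi^4$ and $\int_{t_1}^{t_2}\int |f_t|^2(|\nabla df|^2+|df|^4)\varphi^4$, both bounded by $C_3(1+R^{-4})\ep$. We also need the zeroth order quantity $\int_{t_1}^{t_2}\int_{B_R}|f_t|^2$. This is bounded using $e^{-4u}\le e^{4at}$ and \eqref{loc E est finer}, which give $\int_{t_1}^{t_2}\int_{B_R}|f_t|^2 \le e^{4at_2}(t_2-t_1)\ep$.

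First treat $\int |df_t|^2\varphi^4$. Integrating by parts exactly as in the proof of \Cref{loc E finer}, and then using \eqref{df_t nabla phi}, gives
\[
\int |df_t|^2\varphi^4 \le 2\delta\int|\Delta f_t|^2\varphi^4 + C\int |f_t|^2(\varphi^4+|\nabla\varphi|^4+\varphi^2|\nabla^2\varphi|^2).
\]
Take the cut-off with $|\nabla\varphi|\le C/R$ and $|\nabla^2\varphi|\le C/R^2$. Integrating in time then bounds this term by $C(1+R^{-4})\ep$.

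Next treat $\int|df|^2|df_t|^2\varphi^4$ using \eqref{df^2 df_t^2}. The right-hand side there consists of three kinds of terms:
\begin{itemize}
\item $\delta\int|\Delta f_t|^2\varphi^4$,
\item $\int|f_t|^2(|df|^4+|\nabla df|^2)\varphi^4$,
\item $\int|df_t|^2\varphi^2|\nabla\varphi|^2$.
\end{itemize}
The first two are controlled directly by \Cref{delta f_t lem}. The third is handled by \eqref{df_t nabla phi} again, producing $R^{-4}\int|f_t|^2$-type terms. The time integral is therefore again at most $C_4(1+R^{-4})\ep$. Note that $C_4$ depends on $t_2$ through $e^{4at_2}$ and $C_3(t_2)$, and on $t_2-t_1$ through the zeroth-order term.

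Finally, for $\int|\nabla df_t|^2\varphi^4$ apply \eqref{nabla sq f_t 0}:
\[
\int|\nabla df_t|^2\varphi^4\le 2\int|\Delta f_t|^2\varphi^4 + C\int|df_t|^2\varphi^2|\nabla\varphi|^2 + C\int|df_t|^2\varphi^4 .
\]
The first term is covered by \Cref{delta f_t lem} and the last by the bound just proved. The middle term is where the extra power of $R$ appears. We could bound it crudely by $CR^{-2}\int_{\supp\varphi}|df_t|^2$, using the previous estimate on a slightly larger cut-off. That yields a factor of order $R^{-2}(1+R^{-4})$, which is absorbed into $C_4(1+R^{-6})$. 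Alternatively, \eqref{df_t nabla phi} already gives $R^{-4}$.

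The main obstacle is bookkeeping of the cut-offs. The estimates of \Cref{delta f_t lem} hold with $\varphi^4$ on $B_R$, while the integrations by parts generate terms weighted by $\varphi^2|\nabla\varphi|^2$, for which the $\varphi^4$-weighted bound does not apply directly. I would handle this by nesting: prove the $\int|df_t|^2$ bound with a cut-off on a ball where \Cref{loc E cor} still applies (for instance by shrinking $R$ by a factor of 2, which is harmless since $R=R(\ep)$ is ours to choose). Then use it on the inner cut-off for the $|\nabla df_t|^2$ estimate, tracking the powers of $1/R$.
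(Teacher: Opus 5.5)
Your proposal is correct and follows essentially the paper's route. Both arguments integrate by parts to reduce everything to $\iint|\Delta f_t|^2\varphi^4$, $\iint|f_t|^2(|\nabla df|^2+|df|^4)\varphi^4$ and $\iint_{B_R}|f_t|^2\le e^{4at_2}(t_2-t_1)\ep$, then apply \eqref{nabla sq f_t 0} for $\nabla df_t$; the paper redoes the integration by parts for $|df|^2|df_t|^2$ rather than citing \eqref{df^2 df_t^2}, which is immaterial. For the last step the paper uses your ``crude'' bound $C(1+R^{-2})\iint_{B_R}|df_t|^2$, silently dropping the cut-off weight, so your nesting remark repairs a real sloppiness there, and your alternative via \eqref{df_t nabla phi} avoids the issue entirely and even gives $R^{-4}$ in place of $R^{-6}$.
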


\begin{proof}
We first show the estimate for $\int |df_t|^2 |df|^2 \varphi^4$.
By integration by parts,
\[
\begin{split}
\int_{B_R} |df_t|^2 |df|^2 \varphi^4 =& -\int_{B_R} \langle \Delta f_t, f_t \rangle |df|^2 \varphi^4 - 2 \int_{B_R} \langle df_t, f_t \rangle \langle \nabla df, df \rangle \varphi^4\\
& - 4 \int_{B_R} \langle df_t, f_t \rangle |df|^2 \varphi^3 \nabla \varphi\\
\leq& \frac{1}{2} \int_{B_R} |\Delta f_t|^2 \varphi^4 + \frac{1}{2} \int_{B_R} |f_t|^2 |df|^4 \varphi^4 + \frac{1}{2} \int_{B_R} |df_t|^2 |df|^2 \varphi^4\\
& + C \int_{B_R} |f_t|^2 |\nabla df|^2 \varphi^4 + C \int_{B_R} |f_t|^2 |df|^2 \varphi^2 |\nabla \varphi|^2.
\end{split}
\]
And the last term can be estimated by
\[
\int_{B_R} |f_t|^2 |df|^2 \varphi^2 |\nabla \varphi|^2 \leq \int_{B_R} |f_t|^2 |df|^4 \varphi^4 + C e^{4at_2}\frac{1}{R^4} \int_{B_R} e^{4u} |f_t|^2.
\]
Therefore, we obtain
\[
\int_{t_1}^{t_2} \int_{B_R} |df_t|^2 |df|^2 \varphi^4 \leq C \ep + C e^{4at_2} \frac{1}{R^4} (t_2-t_1) \ep \leq C_4 (1 + \frac{1}{R^4}) \ep
\]
for suitably chosen $C_4$ which only depends on $t_2$ and $t_2-t_1$.
Those for $\iint |df_t|^2 \varphi^4$ is similar.

Finally, for $\iint |\nabla df_t|^2 \varphi^4$, by \eqref{nabla sq f_t 0},
\[
\begin{split}
\int_{t_1}^{t_2} \int_{B_R} |\nabla df_t|^2 \varphi^4 \leq& 2 \int_{t_1}^{t_2}\int_{B_R} |\Delta f_t|^2 \varphi^4 + C (1 + \frac{1}{R^2}) \int_{t_1}^{t_2} \int_{B_R} |df_t|^2\\
 \leq& C_4 (1 + \frac{1}{R^6}) \ep.
\end{split}
\]
\end{proof}

%We may expect that we can obtain similar estimate for $\iint |f_t|^4$ and $\iint |df_t|^2 |f_t|^2$ as above.
%But due to the different power in Sobolev embedding, we actually have the following version of the estimate.
We can apply Sobolev embedding to extend similar types of estimate.
But its result may have different powers in spatial integral and time integral.

\begin{lemma} \label{f_t^4 with ep}
Let $(f,u)$ be a smooth solution of \eqref{eq1} on $M \times [t_1,t_2]$ and $R = R(\ep)$ be chosen as in \Cref{loc E cor}.
Then
\begin{equation}
\begin{split}
\int_{t_1}^{t_2} \left( \int_{B_R} |f_t|^4 \varphi^4 \right)^{\frac{1}{2}} \leq& C_5 (1 + \frac{1}{R^4}) \ep \\
\int_{t_1}^{t_2} \left( \int_{B_R} |df_t|^4 \varphi^4 \right)^{\frac{1}{2}} , \int_{t_1}^{t_2} \left( \int_{B_R} |df_t|^2 |f_t|^2 \varphi^4 \right)^{\frac{1}{2}} \leq& C_5 (1 + \frac{1}{R^6}) \ep
\end{split}
\end{equation}
for some constant $C_5 = C_5(t_2,t_2-t_1)$.
\end{lemma}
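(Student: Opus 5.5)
The plan is to apply the four-dimensional Sobolev embedding $W^{1,2}_0 \hookrightarrow L^4$ to compactly supported functions of the form $F\varphi^2$ or $F\varphi$, square-root it so the spatial $L^4$ norm becomes an $L^2$ norm of one more derivative, and then integrate in time, using the space–time bounds already obtained in \Cref{delta f_t lem} and \eqref{df_t^2 df^2} together with the uniform bound \eqref{loc E est finer}.

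\textbf{First estimate.} Write $|f_t|^4\varphi^4 = (|f_t|\varphi)^4$ and apply Sobolev to $|f_t|\varphi$:
\[
\left( \int_{B_R} |f_t|^4 \varphi^4 \right)^{\frac{1}{2}} \leq C \int_{B_R} |df_t|^2 \varphi^2 + C \int_{B_R} |f_t|^2 |\nabla \varphi|^2 .
\]
The term $\int |df_t|^2\varphi^2$ is handled exactly as $\int |df_t|^2\varphi^4$ in the preceding corollary. One integration by parts gives $\int |\Delta f_t||f_t|\varphi^2$ plus a cut-off term, and its time integral is bounded by $\iint|\Delta f_t|^2\varphi^4$ plus $\iint |f_t|^2$. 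Alternatively, one can simply replace $\varphi$ by $\varphi^2$, i.e. apply Sobolev to $|f_t|\varphi^2$, accepting $\varphi^8$ on the left, which is harmless since $0\le\varphi\le1$ and the statement can be read with a slightly larger cut-off. The lower-order term uses $e^{-4u}\le e^{4at}$:
\[
\int_{t_1}^{t_2}\int_{B_R}|f_t|^2|\nabla\varphi|^2 \leq C e^{4at_2} R^{-2}(t_2-t_1)\,\ep .
\]
Combining with \eqref{delta f_t} and \eqref{df_t^2 df^2} yields $C_5(1+R^{-4})\ep$.

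\textbf{Second and third estimates.} Apply Sobolev to $|df_t|\varphi^2$:
\[
\left(\int |df_t|^4\varphi^8\right)^{1/2} \le C\int |\nabla df_t|^2\varphi^4 + C\int |df_t|^2\varphi^2|\nabla\varphi|^2 .
\]
Integrating in time, the first term is bounded by $C_4(1+R^{-6})\ep$ from \eqref{df_t^2 df^2}. The second is bounded by \eqref{df_t nabla phi}, giving $\delta\iint|\Delta f_t|^2\varphi^4 + CR^{-4}\iint|f_t|^2$, which is $\le C(1+R^{-4})\ep$ after using $e^{-4u}\le e^{4at_2}$. For the mixed term, Cauchy–Schwarz gives
\[
\left(\int |df_t|^2|f_t|^2\varphi^4\right)^{1/2} \le \left(\int|df_t|^4\varphi^4\right)^{1/4}\left(\int|f_t|^4\varphi^4\right)^{1/4}\le \tfrac12\left(\int|df_t|^4\varphi^4\right)^{1/2}+\tfrac12\left(\int|f_t|^4\varphi^4\right)^{1/2}.
\]
Its time integral then follows from the two previous bounds.

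\textbf{Main obstacle.} The only delicate point is bookkeeping of cut-off powers. The Sobolev step naturally produces $\varphi^8$ from $(\cdot\,\varphi^2)^4$, whereas the statement has $\varphi^4$. I would resolve this by using Sobolev on $F\varphi$ and controlling the resulting $\int|\nabla F|^2\varphi^2$ terms by the same integration-by-parts tricks as in \Cref{Int by parts} with $\psi=\varphi$. If this fails, I would apply the already-established $\varphi^4$-weighted estimates on a slightly larger ball $B_{2R}$ with a cut-off $\psi\equiv1$ on $\supp\varphi$, which \Cref{loc E cor} permits after shrinking $R$. Either way, the constants only pick up factors $e^{4at_2}$, $(t_2-t_1)$ and powers of $R^{-1}$ up to $R^{-6}$, consistent with the claimed form.
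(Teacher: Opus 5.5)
Your proposal is correct and follows essentially the same route as the paper: the Sobolev embedding $W^{1,2}_0\hookrightarrow L^4$ applied to $|f_t|\varphi$ and $|df_t|\varphi$, then time integration using \eqref{delta f_t}, \eqref{df_t^2 df^2} and \eqref{loc E est finer} together with $e^{-4u}\le e^{4at}$, and Cauchy--Schwarz plus Young for the mixed term. Your handling of the cut-off powers is more careful than the paper's, which silently uses the $\varphi^4$-weighted space--time bounds for terms that actually carry only $\varphi^2$; either your integration by parts for $\int|df_t|^2\varphi^2$ or your passage to a larger ball closes that small gap.
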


\begin{proof}
By Sobolev embedding, we have
\[
\begin{split}
\left( \int_{B_R} |f_t|^4 \varphi^4 \right)^{\frac{1}{2}} \leq& C \int_{B_R} |df_t|^2 \varphi^2 + |f_t|^2 |\nabla \varphi|^2\\
\int_{t_1}^{t_2} \left( \int_{B_R} |f_t|^4 \varphi^4 \right)^{\frac{1}{2}} \leq& C_4 (1 + \frac{1}{R^4}) \ep + C e^{4at_2} \frac{1}{R^2} \ep\\
\leq & C_5 (1 + \frac{1}{R^4}) \ep.
\end{split}
\]
Also, we have
\[
\begin{split}
\left( \int_{B_R} |df_t|^4 \varphi^4 \right)^{\frac{1}{2}} \leq & C \left( \int_{B_R} |\nabla df_t|^2 \varphi^2 + |df_t|^2 |\nabla \varphi|^2 \right)
\end{split}
\]
and integrating over $[t_1,t_2]$ gives the conclusion.
Finally,
\[
\begin{split}
\left( \int_{B_R} |df_t|^2 |f_t|^2 \varphi^4 \right)^{\frac{1}{2}} \leq & \left( \int_{B_R} |df_t|^4 \varphi^4 \right)^{\frac{1}{4}} \left( \int_{B_R} |f_t|^4 \varphi^4 \right)^{\frac{1}{4}}\\
%\leq& C \left( \int_{B_R} |\nabla df_t|^2 \varphi^2 + |df_t|^2 |\nabla \varphi|^2 \right)^{\frac{1}{2}} \left( \int_{B_R} |f_t|^4 \varphi^4 \right)^{\frac{1}{4}}\\
\leq& \frac{1}{2} \left( \int_{B_R} | df_t|^4 \varphi^4\right)^{\frac{1}{2}} + \frac{1}{2} \left( \int_{B_R} |f_t|^4 \varphi^4 \right)^{\frac{1}{2}}.
\end{split}
\]
Integrating over $[t_1,t_2]$ and using above result, we complete the proof.
\end{proof}

We can improve the estimate of the form $\int \left( \int_{B_R} |f_t|^p \varphi^4 \right)^q$ further.
The key ingredients are above results and $\int |f_t|^2 \leq \ep$.

\begin{lemma} \label{f_t^4 with ep 2}
Let $(f,u)$ be a smooth solution of \eqref{eq1} on $M \times [t_1,t_2]$ and $R = R(\ep)$ be chosen as in \Cref{loc E cor}.
Then
\begin{equation}
\begin{split}
\int_{t_1}^{t_2} \int_{B_R} |f_t|^3 \varphi^4 \leq& C_6 (1 + \frac{1}{R^4}) \ep^{\frac{3}{2}} \\
\int_{t_1}^{t_2} \left( \int_{B_R} |f_t|^6 \varphi^4 \right)^{\frac{1}{2}} \leq& C_6 (1 + \frac{1}{R^6}) \ep^{\frac{3}{2}}\\
\int_{t_1}^{t_2} \int_{B_R} |f_t|^4 \varphi^4 \leq& C_6 (1 + \frac{1}{R^6}) \ep^2\\
\int_{t_1}^{t_2} \left( \int_{B_R} |f_t|^3 \varphi^4 \right)^2 \leq& C_6 (1 + \frac{1}{R^6}) \ep^3
\end{split}
\end{equation}
for some constant $C_6 = C_6(t_2,t_2-t_1)$.
\end{lemma}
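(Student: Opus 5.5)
The common strategy is to combine two bounds, each by H\"older's inequality. The first is the pointwise-in-time smallness
\[
\int_{B_R} |f_t|^2 \varphi^4 \le \int_{B_R} |f_t|^2 \le e^{4at_2}\int_{B_R} e^{4u}|f_t|^2 \le C\ep ,
\]
which holds for every $t\in[t_1,t_2]$ by \Cref{loc E cor}, since $e^{-4u}\le e^{4at}$. The second is the space-time integrability from \Cref{f_t^4 with ep}, \eqref{delta f_t} and \eqref{df_t^2 df^2}. Each estimate should then gain a factor $\ep^{1/2}$ (or $\ep$) from the $L^2$ smallness, on top of the $\ep$ already available from the time-integrated quantities.

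For the first inequality, use Cauchy--Schwarz in space:
\[
\int_{B_R}|f_t|^3\varphi^4 \le \Big(\int_{B_R}|f_t|^2\varphi^4\Big)^{1/2}\Big(\int_{B_R}|f_t|^4\varphi^4\Big)^{1/2} \le C\ep^{1/2}\Big(\int_{B_R}|f_t|^4\varphi^4\Big)^{1/2}.
\]
Integrating in time and applying the first bound of \Cref{f_t^4 with ep} gives $C(1+R^{-4})\ep^{3/2}$.

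For the second inequality, apply Sobolev $W^{1,2}_0\hookrightarrow L^4$ to $|f_t|^{3/2}\varphi^{?}$, or more simply to $|f_t|\,|f_t|^{1/2}$. I would instead write $|f_t|^6\varphi^4 = (|f_t|^3\varphi^2)^2$ and use the 4D Sobolev/Ladyzhenskaya-type inequality $\|v\|_{L^2}\le C\|v\|_{L^{4/3}}^{?}$. This gets messy, so the cleaner route is the Gagliardo--Nirenberg interpolation $\|w\|_{L^6}^3 \le C\|w\|_{L^2}\|w\|_{W^{1,4}}^2$ in dimension $4$ (indeed $1/6 = \theta/2+(1-\theta)(1/4-1/4)$ gives $\theta=1/3$). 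Applied to $w=f_t\varphi$, with suitable powers of $\varphi$ absorbed, it gives
\[
\Big(\int_{B_R}|f_t|^6\varphi^4\Big)^{1/2} \le C\Big(\int_{B_R}|f_t|^2\Big)^{1/2}\Big(\int_{B_R}(|df_t|^4+|f_t|^4)\varphi^4 + |f_t|^4|\nabla\varphi|^4\Big)^{1/2}.
\]
The $\varphi$-bookkeeping here, namely matching powers of the cutoff so that $|df_t|^4\varphi^4$ appears, is the step I expect to be the main obstacle. Integrating in time and using \Cref{f_t^4 with ep} then yields $C(1+R^{-6})\ep^{3/2}$.

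For the third inequality, use Cauchy--Schwarz again:
\[
\int_{B_R}|f_t|^4\varphi^4 \le \Big(\int_{B_R}|f_t|^2\varphi^4\Big)^{1/2}\Big(\int_{B_R}|f_t|^6\varphi^4\Big)^{1/2}.
\]
This is $C\ep^{1/2}$ times the integrand of the second estimate, so after integrating in time we obtain $\ep^{2}$.

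For the fourth inequality, note that $\big(\int|f_t|^3\varphi^4\big)^2\le \int|f_t|^2\varphi^4\cdot\int|f_t|^4\varphi^4 \le C\ep\int|f_t|^4\varphi^4$. Integrating in time and using the third estimate gives $C(1+R^{-6})\ep^3$.

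Finally, collect all constants depending on $t_2$ and $t_2-t_1$ into $C_6$.
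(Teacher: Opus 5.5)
Your proposal is correct and follows the paper's proof. The first, third and fourth bounds use the same Cauchy--Schwarz splittings, and your Gagliardo--Nirenberg inequality $\|w\|_{L^6}^3\le C\|w\|_{L^2}\|Dw\|_{L^4}^2$ is exactly what the paper obtains by applying $W^{1,2}_0\hookrightarrow L^4$ to $|f_t|^{3/2}\varphi$ and then Cauchy--Schwarz. Done that way (your first idea, with the missing power of $\varphi$ equal to $1$), it gives $\big(\int_{B_R}|f_t|^6\varphi^4\big)^{1/2}\le C\int_{B_R}\big(|f_t|\,|df_t|^2\varphi^2+|f_t|^3|\nabla\varphi|^2\big)$, and Cauchy--Schwarz against $\big(\int_{B_R}|f_t|^2\big)^{1/2}$ on each term yields your displayed inequality with the right cutoff powers, so the bookkeeping obstacle you anticipated disappears (the leftover $|\nabla\varphi|$ term needs a slightly larger cutoff, just as in the paper).
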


\begin{proof}
First note that
\[
\begin{split}
\int_{t_1}^{t_2} \int_{B_R}|f_t|^3 \varphi^4 \leq& \int_{t_1}^{t_2} \left(\int_{B_R} |f_t|^2 \varphi^4 \right)^{\frac{1}{2}} \left( \int_{B_R} |f_t|^4 \varphi^4 \right)^{\frac{1}{2}} \\
\leq& C e^{2at_2} \ep^{\frac{1}{2}} C_5 (1 + \frac{1}{R^4}) \ep \leq C_6 (1 + \frac{1}{R^4}) \ep^{\frac{3}{2}}.
\end{split}
\]
Now this implies, by Sobolev embedding $W^{1,2}_0 \hookrightarrow L^4$,
\[
\begin{split}
\int_{t_1}^{t_2} \left( \int_{B_R} |f_t|^6 \varphi^4 \right)^{\frac{1}{2}} \leq& C \int_{t_1}^{t_2} \int_{B_R} |df_t|^2 |f_t| \varphi^2 + |f_t|^3 |\nabla \varphi|^2\\
\leq& C \int_{t_1}^{t_2} \left( \int_{B_R} |f_t|^2 \right)^{\frac{1}{2}} \left( \int_{B_R} |df_t|^4 \varphi^4 \right)^{\frac{1}{2}} + C_6 (1 + \frac{1}{R^6}) \ep^{\frac{3}{2}}\\
\leq& C_6 (1 + \frac{1}{R^6}) \ep^{\frac{3}{2}}.
\end{split}
\]
Finally,
\[
\begin{split}
\int_{t_1}^{t_2} \int_{B_R} |f_t|^4 \varphi^4 \leq& \int_{t_1}^{t_2} \left( \int_{B_R} |f_t|^2 \varphi^4 \right)^{\frac{1}{2}} \left( \int_{B_R} |f_t|^6 \varphi^4 \right)^{\frac{1}{2}} \leq C_6 (1 + \frac{1}{R^6}) \ep^2
\end{split}
\]
which also implies that
\[
\begin{split}
\int_{t_1}^{t_2} \left( \int_{B_R} |f_t|^3 \varphi^4 \right)^2 \leq& C \int_{t_1}^{t_2} \left( \int_{B_R} |f_t|^2 \varphi^4 \right) \left( \int_{B_R} |f_t|^4 \varphi^4 \right) \leq C_6 (1 + \frac{1}{R^6}) \ep^3.
\end{split}
\]
\end{proof}

%Using above result, we can also obtain estimates of the form $\int \left( \int_{B_R} |df_t|^2 |f_t|^p \varphi^4 \right)^q$.
%Note that the arguments are like bootstrapping style, hence can increase $p$ as much as we want.
Using above result, we can improve the estimate in \Cref{f_t^4 with ep} as follows.

\begin{lemma} \label{f_t^4 with ep 3}
Let $(f,u)$ be a smooth solution of \eqref{eq1} on $M \times [t_1,t_2]$ and $R = R(\ep)$ be chosen as in \Cref{loc E cor}.
Then
\begin{equation}
\begin{split}
\int_{t_1}^{t_2} \left( \int_{B_R} |df_t|^2 |f_t|^2 \varphi^4 \right)^{\frac{2}{3}} ,
 %\int_{t_1}^{t_2} \left( \int_{B_R} |df_t|^3 \varphi^4 \right)^{\frac{4}{5}}, 
 \int_{t_1}^{t_2} \left( \int_{B_R} |f_t|^8 \varphi^4 \right)^{\frac{1}{3}}
% \int_{t_1}^{t_2} \left( \int_{B_R} |df_t|^2 |f_t|^4 \varphi^4 \right)^{\frac{2}{5}} ,
 %\int_{t_1}^{t_2} \left( \int_{B_R} |df_t|^3 \varphi^4 \right)^{\frac{4}{5}}, 
% \int_{t_1}^{t_2} \left( \int_{B_R} |f_t|^{12} \varphi^4 \right)^{\frac{1}{5}}
% \int_{t_1}^{t_2} \left( \int_{B_R} |df_t|^2 |f_t|^6 \varphi^4 \right)^{\frac{2}{7}} ,
 %\int_{t_1}^{t_2} \left( \int_{B_R} |df_t|^3 \varphi^4 \right)^{\frac{4}{5}}, 
% \int_{t_1}^{t_2} \left( \int_{B_R} |f_t|^{16} \varphi^4 \right)^{\frac{1}{7}} 
% \int_{t_1}^{t_2} \left( \int_{B_R} |df_t|^2 |f_t|^4 \varphi^4 \right)^{\frac{2}{5}} ,
%  \int_{t_1}^{t_2} \left( \int_{B_R} |f_t|^{\frac{20}{3}} \varphi^4 \right)^{\frac{3}{7}}
 \leq C_7 (1 + \frac{1}{R^6}) \ep
\end{split}
\end{equation}
for some constant $C_7 = C_7(t_2, t_2-t_1)$.
\end{lemma}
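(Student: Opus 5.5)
We plan to argue as in \Cref{f_t^4 with ep 2}: Hölder in space to split off a factor controlled uniformly in time by \eqref{loc E est finer}, a Sobolev embedding $W^{1,2}_0\hookrightarrow L^4$ applied to a suitable power of $|f_t|$, and then Hölder or Young in time so that only the space-time integrals of \Cref{f_t^4 with ep}, \Cref{f_t^4 with ep 2} and \eqref{df_t^2 df^2} remain. Throughout we use $\int_{B_R}|f_t|^2\le e^{4at_2}\int_{B_R}e^{4u}|f_t|^2\le C\ep$ for all $t\in[t_1,t_2]$.

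For the first quantity we use pointwise Hölder in space:
\[
\int_{B_R}|df_t|^2|f_t|^2\varphi^4\le\Big(\int_{B_R}|df_t|^4\varphi^4\Big)^{\frac12}\Big(\int_{B_R}|f_t|^4\varphi^4\Big)^{\frac12}.
\]
Raising this to the power $\frac23$ gives $X^{1/3}Y^{1/3}$, where $X=\left(\int|df_t|^4\varphi^4\right)^{1/2}$ and $Y=\int|f_t|^4\varphi^4$. Hölder in time with exponents $3$ and $\frac32$ then yields
\[
\int_{t_1}^{t_2} X^{1/3}Y^{1/3}\le\Big(\int_{t_1}^{t_2} X\Big)^{\frac13}\Big(\int_{t_1}^{t_2} Y^{\frac12}\Big)^{\frac23}.
\]
By \Cref{f_t^4 with ep}, $\int X\le C_5(1+R^{-6})\ep$ and $\int Y^{1/2}\le C_5(1+R^{-4})\ep$, so the product is $\le C(1+R^{-6})\ep$. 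If the time exponent for $Y$ turns out inconvenient, we can alternatively write $Y^{1/3}\le Y^{1/6}\cdot Y^{1/6}$ and bound one factor pointwise using $\int|f_t|^4\varphi^4\le(\int|f_t|^2)^{1/2}(\int|f_t|^6\varphi^4)^{1/2}$ together with \Cref{f_t^4 with ep 2}.

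For the second quantity we apply Sobolev to $|f_t|^2\varphi^2$:
\[
\Big(\int|f_t|^8\varphi^8\Big)^{\frac12}\le C\int\big(|f_t|^2|df_t|^2\varphi^4+|f_t|^4\varphi^2|\nabla\varphi|^2\big).
\]
The cut-off power $\varphi^4$ in the statement is handled by enlarging the cut-off, i.e.\ taking $\varphi^{1/2}$ or passing to a slightly larger ball, which is harmless. Next, Hölder gives $\int|f_t|^8\varphi^4\le(\int|f_t|^2)^{1/3}(\int|f_t|^{11}\cdots)$, which is awkward. The cleaner route is to note that $\left(\int|f_t|^8\varphi^4\right)^{1/3}$ is dominated by the $\frac23$ power of the square root above. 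Thus
\[
\Big(\int|f_t|^8\varphi^4\Big)^{\frac13}\le C\Big(\int|f_t|^2|df_t|^2\varphi^4\Big)^{\frac23}+C\Big(\int|f_t|^4\varphi^2|\nabla\varphi|^2\Big)^{\frac23}.
\]
The first term is exactly the quantity bounded in the previous paragraph. The second is $\le CR^{-4/3}\,(\int|f_t|^4)^{2/3}$, and after time integration it is controlled by $\int_{t_1}^{t_2}\int|f_t|^4\varphi^4\le C_6(1+R^{-6})\ep^2$ via Hölder in time with a factor $(t_2-t_1)^{1/3}$. Since $\ep^{4/3}\le\ep$, this is absorbed into $C_7(1+R^{-6})\ep$.

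The main obstacle is bookkeeping. First, the cut-off powers ($\varphi^4$ versus $\varphi^8$ after Sobolev) must be reconciled, which requires replacing $\varphi$ by a comparable cut-off on a slightly larger ball where the earlier lemmas still apply with $R$ replaced by $2R$. Second, the $R$-dependence must be tracked so that it stays within $(1+R^{-6})$. We expect the first estimate to close directly, and the second to reduce to the first together with \Cref{f_t^4 with ep 2}.
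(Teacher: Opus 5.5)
Your plan has the same architecture as the paper's proof: Hölder in space, then time integration against the earlier space--time bounds, and Sobolev applied to $|f_t|^2$ times a cut-off to reduce the $L^8$ quantity to the first one plus a lower-order term. However, the first estimate contains an exponent slip that makes the key step fail as written. Set $X=(\int_{B_R}|df_t|^4\varphi^4)^{1/2}$ and $Y=\int_{B_R}|f_t|^4\varphi^4$. Spatial Hölder gives $\int_{B_R}|df_t|^2|f_t|^2\varphi^4\le X\,Y^{1/2}$, whose $\frac23$ power is $X^{2/3}Y^{1/3}$, not $X^{1/3}Y^{1/3}$; indeed $(\int|df_t|^4\varphi^4)^{1/3}=X^{2/3}$. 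Your bound $(\int_{t_1}^{t_2}X)^{1/3}(\int_{t_1}^{t_2}Y^{1/2})^{2/3}$ is homogeneous of degree $\frac13$ in $X$, while the integrand has degree $\frac23$. Since only $\int_{t_1}^{t_2}X\,dt$ is controlled (not $\sup_t X$), it cannot dominate $\int_{t_1}^{t_2}X^{2/3}Y^{1/3}$. To use \Cref{f_t^4 with ep} you must put the exponent $\frac32$ on $X^{2/3}$, and then the conjugate exponent $3$ lands on $Y^{1/3}$. So what you actually need is $\int_{t_1}^{t_2}Y\,dt=\int_{t_1}^{t_2}\int_{B_R}|f_t|^4\varphi^4\le C_6(1+R^{-6})\ep^2$ from \Cref{f_t^4 with ep 2}. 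The bound $\int_{t_1}^{t_2}Y^{1/2}\,dt\lesssim\ep$ from \Cref{f_t^4 with ep} alone is not enough.

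The repair is what the paper does. Use Young's inequality pointwise in time, $X^{2/3}Y^{1/3}\le\frac23X+\frac13Y$, or equivalently Hölder in time with exponents $(\frac32,3)$. This gives $(\int X)^{2/3}(\int Y)^{1/3}\le C(1+R^{-6})\ep^{4/3}$, which is $\le C_7(1+R^{-6})\ep$ for $\ep\le1$. Your fallback can also be made to work, but only with the corrected exponent. Pointwise in time, $Y\le C\ep^{1/2}(\int_{B_R}|f_t|^6\varphi^4)^{1/2}$; Hölder with exponents $(\frac32,3)$ against the second estimate of \Cref{f_t^4 with ep 2} then again yields $C(1+R^{-6})\ep^{4/3}$. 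This is just the proof of $\iint|f_t|^4\varphi^4\lesssim\ep^2$ inlined. With the first estimate fixed, your treatment of the $L^8$ quantity coincides with the paper's. It is in fact more explicit on two points the paper passes over silently: the lower-order term $\int|f_t|^4|\nabla\varphi|^2$ (handled via Hölder in time and \Cref{f_t^4 with ep 2}), and the $\varphi^2$ versus $\varphi^4$ cut-off mismatch.
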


\begin{proof}
First note that by H{\"o}lder and Young's inequality,
\[
\begin{split}
\int_{t_1}^{t_2} \left( \int_{B_R} |df_t|^2 |f_t|^2 \varphi^4 \right)^{\frac{2}{3}} \leq& \int_{t_1}^{t_2} \left( \int_{B_R} |df_t|^4 \varphi^4 \right)^{\frac{1}{3}} \left( \int_{B_R} |f_t|^4 \varphi^4 \right)^{\frac{1}{3}}\\
\leq& \frac{2}{3} \int_{t_1}^{t_2} \left( \int_{B_R} |df_t|^4 \varphi^4 \right)^{\frac{1}{2}} + \frac{1}{3} \int_{t_1}^{t_2} \int_{B_R} |f_t|^4 \varphi^4\\
\leq& C_7 (1 + \frac{1}{R^6}) \ep.
\end{split}
\]

%Next, by integration by parts,
%\[
%\begin{split}
%\int_{B_R} |df_t|^3 \varphi^4 =& -\int_{B_R} \langle \Delta f_t, f_t \rangle |df_t| \varphi^4 - \int_{B_R} \langle df_t, f_t \rangle \langle \nabla df_t, df_t \rangle |df_t|^{-1} \varphi^4 - 4 \int_{B_R} \langle df_t, f_t \rangle |df_t| \varphi^3 \nabla \varphi\\
%\leq& \left(  \int_{B_R} |\nabla df_t|^2 \varphi^4 \right)^{\frac{1}{2}} \left( \int_{B_R} |df_t|^2 |f_t|^2 \varphi^4 \right)^{\frac{1}{2}} + \frac{1}{2} \int_{B_R} |df_t|^3 \varphi^4 + C \int_{B_R} |f_t|^3 \varphi |\nabla \varphi|^3.
%\end{split}
%\]
%Hence, by Young's inequality, we obtain
%\[
%\begin{split}
%\left( \int_{B_R} |df_t|^3 \varphi^4 \right)^{\frac{4}{5}} \leq& C \left(  \int_{B_R} |\nabla df_t|^2 \varphi^4 \right)^{\frac{2}{5}} \left( \int_{B_R} |df_t|^2 |f_t|^2 \varphi^4 \right)^{\frac{2}{5}} + C \left( \int_{B_R} |f_t|^3 \varphi |\nabla \varphi|^3 \right)^{\frac{4}{5}}\\
%\leq& C \int_{B_R} |\nabla df_t|^2 \varphi^4 + C \left( \int_{B_R} |df_t|^2 |f_t|^2 \varphi^4 \right)^{\frac{2}{3}} + C \left( \int_{B_R} |f_t|^3 \varphi |\nabla \varphi|^3 \right)^{\frac{4}{5}}.
%\end{split}
%\]
%Integrating over time, we obtain the desired inequality.
%Similarly,
By Sobolev embedding,
\[
\left( \int_{B_R} |f_t|^8 \varphi^4 \right)^{\frac{1}{3}} \leq C \left( \int_{B_R} |df_t|^2 |f_t|^2 \varphi^2 + |f_t|^4 |\nabla \varphi|^2 \right)^{\frac{2}{3}}.
\]
Integrating over time, we obtain the desired inequality.

\end{proof}

It seems this machinery will boost the exponent further, but due to the $L^p L^q$ structure, higher value of $q$ results lower value of $p$.
We will improve the estimate further in other way, in later section.

%%%%%%%%%%%%%%%%%%%%%%%%%%%%%%%%%%%%%%%%%%%%%%%%%%%%%%%%%%%%%%%%%%%%%%%%%%%%%
\section{$C^0$ estimate for $df$}
\label{sec4}

In this section, we obtain $C^0$ estimate for $|df|$ over $B_r \times [t_1,t_2]$ for some small $r>0$ under smallness assumption of $\int_{B_r} 1 + |\nabla df|^2 + |df|^4$.
Note that by \Cref{loc E cor}, for any $\ep>0$, we can choose $R$ small enough so that $\int_{B_R} |\Delta f|^2 \leq \ep$.
However, this does not guarantee $\int_{B_R} |\nabla df|^2 \leq \ep$ nor $\int_{B_R} |df|^4 \leq \ep$ directly, because $\int |\Delta f|^2$ solely cannot control $\int |\nabla df|^2$ and $\int |df|^4$ locally.
This is one of the reasons why in many literatures of biharmonic map flow, we need $\int_{B_R} |\Delta f|^2 + \left( \int_{B_R} |df|^4 \right)^{\frac{1}{2}} < \ep_0$ or $\int_{B_R} |\nabla df|^2 + \left( \int_{B_R} |df|^4 \right)^{\frac{1}{2}} < \ep_0$ for $\ep$-regularity.

However, we can establish H{\"o}lder-like behavior of them as we will see below.
Note that this behavior is similar to that in n-Conformal Heat Flow, see in \cite{P25}.

\begin{prop} \label{df^4 lem}
Let $(f,u)$ be a smooth solution of \eqref{eq1} on $M \times [t_1,t_2]$ and $\ep>0$ is given.
Then there exists $R = R(\ep)>0$ and $\alpha = \alpha(t_2,R) \in (0,1)$ such that for any $r \in (0,R]$,
\begin{equation} \label{df^4}
\int_{B_r} |df|^4 \leq C \ep^2 + \left( \frac{r}{R} \right)^{\alpha} (4C_1 E(0)^2 + C_2).
\end{equation}
\end{prop}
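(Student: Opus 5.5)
We prove \eqref{df^4} by a Morrey-type decay (hole-filling) argument for $\int_{B_r}|df|^4$. The inputs are the global bound \eqref{df4 bdd} together with energy monotonicity (\Cref{E dec}), and the local smallness from \Cref{loc E cor}. The global bound gives $\int_{B_R}|df|^4 \le \int_M |df|^4 \le 4C_1E(0)^2+C_2$ for every $R$. \Cref{loc E cor} gives, for $R=R(\ep)$, $\int_{B_R}|\Delta f|^2\le \ep$ and $\int_{B_R}e^{4u}|f_t|^2\le\ep$ on the time interval. The whole task is to convert smallness of $\Delta f$ on $B_R$ into decay of $\int_{B_r}|df|^4$ as $r\to0$, up to an error of order $\ep^2$.

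\textbf{Step 1 (local elliptic estimate).} Fix $\rho\le R$. Let $\varphi$ be a cut-off on $B_\rho$ with $\varphi\equiv1$ on $B_{\rho/2}$, and write $\bar f=\frac{1}{|A_\rho|}\int_{A_\rho}f$ for the mean over the annulus $A_\rho=B_\rho\setminus B_{\rho/2}$. Since $W^{2,2}\hookrightarrow W^{1,4}$ is scale invariant in dimension four, we apply it to $\varphi(f-\bar f)$. Expanding $\Delta(\varphi(f-\bar f))$ and using the Poincar\'e inequality on the annulus then gives
\[
\Big(\int_{B_{\rho/2}}|df|^4\Big)^{1/2}\le C\int_{B_\rho}|\Delta f|^2 + C\Big(\int_{A_\rho}|df|^4\Big)^{1/2},
\]
where lower-order curvature terms are absorbed using $\rho\le R$ small. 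The lower-order pieces are of two kinds. The terms $\int|df|^2|\nabla\varphi|^2$ are handled by H\"older, which gives $\le C(\int_{A_\rho}|df|^4)^{1/2}$. The terms $\int|f-\bar f|^2|\nabla^2\varphi|^2$ are handled by Poincar\'e--Sobolev on the annulus, with the same bound. We now square this inequality, insert $\int_{B_\rho}|\Delta f|^2\le\ep$, and use $(\int_{A_\rho}|df|^4)=\int_{B_\rho}|df|^4-\int_{B_{\rho/2}}|df|^4$. The result is
\[
\int_{B_{\rho/2}}|df|^4\le C\ep^2 + C_0\Big(\int_{B_\rho}|df|^4-\int_{B_{\rho/2}}|df|^4\Big).
\]

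\textbf{Step 2 (hole filling).} Rearranging the last inequality gives $\Phi(\rho/2)\le\theta\Phi(\rho)+C\ep^2$, where $\Phi(\rho)=\int_{B_\rho}|df|^4$ and $\theta=C_0/(1+C_0)<1$. We iterate $k$ times starting from $\rho=R$:
\[
\Phi(2^{-k}R)\le\theta^k\Phi(R)+\frac{C\ep^2}{1-\theta}.
\]
For $r\in(0,R]$ we choose $k$ with $2^{-k-1}R<r\le2^{-k}R$, set $\alpha=\log_2(1/\theta)$, and use $\Phi(R)\le 4C_1E(0)^2+C_2$. This yields \eqref{df^4}, after adjusting constants with the factor $2^\alpha$; that factor can be absorbed by slightly shrinking $\alpha$ or by a harmless change in the constant of the first term.

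\textbf{Main obstacle.} The delicate point is Step 1, which requires a genuinely local $W^{2,2}\to W^{1,4}$ estimate whose remainder involves only $\int_{A_\rho}|df|^4$. Without further care, remainders such as $\int|df|^2|\nabla\varphi|^2$ or $\rho^{-4}\int|f-\bar f|^2$ appear; these must be bounded by $(\int_{A_\rho}|df|^4)^{1/2}$ and not by $\int_{B_\rho}|df|^4$ itself, otherwise the hole-filling gap disappears. The curvature terms of $g_0$ contribute factors $\rho^2$ that are harmless for small $R$. Any dependence of $\alpha$ on $t_2$ and $R$ enters only through how small $R$ must be taken and through the time-uniform bound on $\int_{B_R}|\Delta f|^2$ on $[t_1,t_2]$, as in the remark following \Cref{loc E cor}.
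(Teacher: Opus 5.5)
Your proposal is correct and is essentially the paper's own hole-filling argument: the same smallness input from \Cref{loc E cor}, the same inequality $\int_{B_{\rho/2}}|df|^4\le C\ep^2+C_0\int_{B_\rho\setminus B_{\rho/2}}|df|^4$, and the same iteration with $\alpha=\log_2(1/\theta)$. The only difference is in Step 1, where the paper applies $W^{1,2}_0\hookrightarrow L^4$ directly to $\varphi\,df$ and uses \eqref{nabla sq} to trade $\nabla df$ for $\Delta f$, so only $|\nabla\varphi|$ terms appear and no mean subtraction or Poincar\'e inequality on the annulus is needed.

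One small correction concerns the factor $2^\alpha$: it cannot be removed by shrinking $\alpha$ or enlarging the $\ep^2$ constant, since for $r\in(R/2,R)$ you only know $\int_{B_r}|df|^4\le\int_{B_R}|df|^4$. It disappears if you take the final $R$ to be half of the radius given by \Cref{loc E cor}, a point the paper passes over silently.
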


\begin{proof}
First choose $R_1$ as in \Cref{loc E cor}.
Let $R \leq R_1$ which is determined later and consider $\varphi$ a cut-off function on $B_R$ with $\varphi \equiv 1$ on $B_{R/2}$ and $|\nabla \varphi| \leq \frac{4}{R}$ on $B_R$.
Then by \Cref{loc E cor}, we have $\int_{B_R} |\Delta f|^2 \leq \ep$.

By Sovolev embedding and \eqref{nabla sq}, we have
\[
\begin{split}
\left( \int_{B_R} |df|^4 \varphi^4 \right)^{\frac{1}{2}} \leq& C \left( \int_{B_R} |\nabla df|^2 \varphi^2 + |df|^2 |\nabla \varphi|^2 \right)\\
\leq& C \int_{B_R} |\Delta f|^2 \varphi^2 + C \int_{B_R} |df|^2 \varphi^2 + C \int_{B_R} |df|^2 |\nabla \varphi|^2.
\end{split}
\]
Here the second term is estimated by
\[
C\int_{B_R} |df|^2 \varphi^2 \leq C\left( \int_{B_R} 1 \right)^{\frac{1}{2}} \left( \int_{B_R} |df|^4 \varphi^4 \right)^{\frac{1}{2}} \leq \frac{1}{2} \left( \int_{B_R} |df|^4 \varphi^4 \right)^{\frac{1}{2}}
\]
if $R$ is small enough.
Also, we can estimate the last term by
\[
C \int_{B_R} |df|^2 |\nabla \varphi|^2 \leq C \left( \int_{B_R} |\nabla \varphi|^4 \right)^{\frac{1}{2}} \left( \int_{B_R \setminus B_{R/2}} |df|^4 \right)^{\frac{1}{2}} \leq C \left( \int_{B_R \setminus B_{R/2}} |df|^4 \right)^{\frac{1}{2}}.
\]
Combining them together, we obtain
\[
\int_{B_{R/2}} |df|^4 \leq C \ep^2 + C_0 \int_{B_R \setminus B_{R/2}} |df|^4
\]
for some constant $C_0$ which only depends on the Ricci curvature of $g_0$.
By suitable modification, we obtain
\[
\begin{split}
%\int_{B_{R/2}} |df|^4 \leq& C \ep^2 + C_0 \int_{B_R \setminus B_{R/2}} |df|^4\\
\int_{B_{R/2}} |df|^4 \leq& \frac{C}{1+C_0} \ep^2 + \theta \int_{B_R} |df|^4
\end{split}
\]
where $\theta = \frac{C_0}{1+C_0} \in (0,1)$ is independent on $R$.
Iterating this process and we can obtain
\[
\int_{B_{R/2^k}} |df|^4 \leq \frac{C}{1+C_0} \ep^2 \frac{1}{1-\theta} + \theta^k \int_{B_R} |df|^4.
\]
Hence, for any $r \in (0,R]$, by letting $\alpha = -\log_2(\theta)>0$, we get
\[
\int_{B_r} |df|^4 \leq C \ep^2 + \left( \frac{r}{R} \right)^{\alpha} \int_{B_R} |df|^4 \leq C \ep^2 + \left( \frac{r}{R} \right)^{\alpha} (4C_1 E(0)^2 + C_2).
\]
This completes the proof.
\end{proof}

\begin{cor} \label{nabla df lem}
Let $(f,u)$ be a smooth solution of \eqref{eq1} on $M \times [t_1,t_2]$ and $\ep>0$ is given.
Then there exists $R = R(\ep)>0$ and $\alpha = \alpha(t_2,R) \in (0,1)$ such that for any $r \in (0,R]$,
\begin{equation} \label{nabla df}
\int_{B_r} |\nabla df|^2 \leq C \ep + C \left( \frac{r}{R} \right)^{\frac{\alpha}{2}} (4C_1 E(0)^2 + C_2)^{\frac{1}{2}}.
\end{equation}
\end{cor}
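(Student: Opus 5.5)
We derive \eqref{nabla df} from \Cref{df^4 lem} by a localized version of \eqref{nabla sq}. The point is that, locally, $\int |\nabla df|^2$ exceeds $\int|\Delta f|^2$ only by lower-order terms involving $|df|^2$, and these are controlled by the $L^4$ smallness of $df$ from \Cref{df^4 lem}.

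Fix $\ep>0$ and take $R=R(\ep)$ and $\alpha$ as in \Cref{df^4 lem}. We may shrink $R$ so that it also lies below the radius from \Cref{loc E cor}; then $\int_{B_R}|\Delta f|^2\le \ep$ on the time interval under consideration. For $r\in(0,R/2]$, let $\varphi$ be a cut-off function on $B_{2r}$ with $\varphi\equiv1$ on $B_r$ and $|\nabla\varphi|\le 4/r$. Applying \eqref{nabla sq} gives
\[
\int_{B_r}|\nabla df|^2\le 2\int_{B_{2r}}|\Delta f|^2+C\int_{B_{2r}}|df|^2\varphi^2+C\int_{B_{2r}}|df|^2|\nabla\varphi|^2 .
\]
The first term is at most $2\ep$. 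For the two $|df|^2$ terms we use H\"older in dimension $4$:
\[
\int_{B_{2r}}|df|^2|\nabla\varphi|^2\le \Big(\int_{B_{2r}}|\nabla\varphi|^4\Big)^{1/2}\Big(\int_{B_{2r}}|df|^4\Big)^{1/2}\le C\Big(\int_{B_{2r}}|df|^4\Big)^{1/2}.
\]
This is scale invariant, since $\int|\nabla\varphi|^4\le C r^{-4}r^4$. The term with $\varphi^2$ is bounded in the same way by $C r^2 (\int_{B_{2r}}|df|^4)^{1/2}$, which is even better.

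Next we insert \eqref{df^4} at radius $2r\le R$ and use $\sqrt{x+y}\le\sqrt x+\sqrt y$:
\[
\Big(\int_{B_{2r}}|df|^4\Big)^{1/2}\le C\ep+\Big(\frac{2r}{R}\Big)^{\alpha/2}(4C_1E(0)^2+C_2)^{1/2}.
\]
Since $2^{\alpha/2}$ is absorbed into $C$, summing the three pieces gives \eqref{nabla df} for $r\le R/2$. For $r\in(R/2,R]$, we replace $R$ by $R/2$ at the start, or equivalently apply \Cref{df^4 lem} with a slightly smaller radius; the same constants work after adjusting $C$.

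The main obstacle is the bookkeeping of radii. The cut-off requires control on the doubled ball $B_{2r}$, so both $\int|\Delta f|^2\le\ep$ and \eqref{df^4} must hold up to radius $2r$. We therefore choose $R$ in \Cref{df^4 lem} to be half of the radius given by \Cref{loc E cor}. There is also a dependence on $\ep$ in $R$, but it enters only through the choice of $R$ and does not affect the form of the estimate.
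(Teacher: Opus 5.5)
Your proof is correct and is essentially the paper's proof. Both apply \eqref{nabla sq}, bound the $|\Delta f|^2$ term by $\ep$ via \Cref{loc E cor}, treat the $|df|^2$ terms by H\"older with the scale-invariant bound on $\int|\nabla\varphi|^4$, and insert \eqref{df^4}. The only difference is the handling of radii. The paper takes the cut-off supported in $B_r$ itself, so strictly it controls only $\int_{B_r}|\nabla df|^2\varphi^2$. Your cut-off on $B_{2r}$, combined with halving $R$, gives the bound on all of $B_r$ exactly as stated, with $2^{\alpha/2}$ absorbed into $C$.
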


\begin{proof}
Choose $R$ as in \Cref{df^4 lem}.
For any $r \in (0,R]$ and cut-off function $\varphi$ on $B_r$, by \eqref{nabla sq},
\[
\begin{split}
\int_{B_r} |\nabla df|^2 \varphi^2 \leq& 2 \int_{B_r} |\Delta f|^2 \varphi^2 + C \int_{B_r} |df|^2 (\varphi^2 + |\nabla \varphi|^2)\\
\leq& 2 \ep + C \left( \int_{B_r} (\varphi^2 + |\nabla \varphi|^2)^2 \right)^{\frac{1}{2}} \left( \int{B_r} |df|^4 \right)^{\frac{1}{2}}\\
\leq& C \ep + C \left( \frac{r}{R} \right)^{\frac{\alpha}{2}} (4C_1 E(0)^2 + C_2)^{\frac{1}{2}}.
\end{split}
\]
This completes the proof.
\end{proof}

\Cref{df^4 lem} and \Cref{nabla df lem} implies that, for any $\ep>0$, if we choose $r$ small enough, we can guarantee that
\begin{equation} \label{df^4 nabla df^2 ep}
\int_{B_r} |df|^4 \leq C_8 \ep^2, \quad  \int_{B_r} |\nabla df|^2 \leq C_8 \ep
\end{equation}
where the constant $C_8 = C_8(t_2,t_2-t_1,C_1,C_2,E(0),\alpha)$ is independent on $R$.

Using this smallness condition, we can obtain higher order estimate for spatial derivatives.

\begin{prop} \label{nabla^3 f est}
There exists $\ep_1>0$ such that the following holds:

Let $(f,u)$ be a smooth solution of \eqref{eq1} on $M \times [t_1,t_2]$ with
\begin{equation} \label{ass}
\sup_{t \in [t_1,t_2]} \int_{B_r} 1 + |df|^4 + |\nabla df|^2 \leq \ep_1.
\end{equation}
Then
\[
\begin{split}
\int_{B_r} |\nabla df|^4 \varphi^8, \int_{B_r} |df|^8 \varphi^8 \leq& C_9 \ep_1^2\\
\int_{B_r} |\nabla^2 df|^2 \varphi^4, \int_{B_r} |\nabla \Delta f|^2 \varphi^4 \leq& C_9 \ep_1
\end{split}
\]
for some constant
\[
C_9 := C_9(t_2,r,t_2-t_1, V_1(t_1))
\]
where
\[
\begin{split}
%K_1(t_1) =& \int_{B_R} e^{4u}|f_t|^2 (t_1) + 4a \int_{B_R} |\Delta f|^2 (t_!)\\
V_1(t_1) =& \int_{B_r} e^{8u} (t_1).
\end{split}
\]
\end{prop}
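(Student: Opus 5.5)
The strategy is to close a chain of estimates around the quantity $X := \int_{B_r} |\nabla^2 df|^2 \varphi^4$, using the smallness \eqref{ass} to absorb all nonlinear terms. The only term that is not spatially local in $f$ is the $f_t$ term, and I will control it by the local energy estimate.

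\textbf{Step 1: the fourth-order quantities in terms of $X$.} By \Cref{Sobolev}, namely \eqref{nabla sq f}, \eqref{df^8} and \eqref{df^6}, together with \eqref{ass}, I will record
\[
\int_{B_r} |\nabla df|^2|df|^2\varphi^4 + \int_{B_r} |df|^6 \varphi^4 \leq C\ep_1^{1/2}\Big(X + \int_{B_r}|\nabla df|^2\varphi^2|\nabla\varphi|^2\Big) + C\ep_1^{1/2}\int_{B_r}|df|^4\varphi^2|\nabla\varphi|^2 .
\]
Since $|\nabla\varphi|\le C/r$ and $r$ is fixed, the last two integrals are bounded by $C(r)\ep_1$. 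In the same way, Sobolev embedding $W^{1,2}_0\hookrightarrow L^4$ gives
\[
\Big(\int_{B_r} |\nabla df|^4\varphi^8\Big)^{1/2}\le C\big(X + C(r)\ep_1\big)
\]
and \eqref{df^8} gives the analogous bound for $\int_{B_r} |df|^8\varphi^8$. It therefore suffices to prove $X\le C_9\ep_1$.

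\textbf{Step 2: bound $X$ through $\nabla\Delta f$.} I will use \eqref{nabla^3 f}, combined with the commutator bound \eqref{comm} to replace $|\Delta df|^2$ by $|\nabla\Delta f|^2$. This gives
\[
X\le 2\int_{B_r}|\nabla\Delta f|^2\varphi^4 + C\int_{B_r}(|\nabla df|^2|df|^2+|df|^6)\varphi^4 + C(r)\ep_1 .
\]
Next I insert \Cref{loc W32 lem}, which bounds $\int|\nabla\Delta f|^2\varphi^4$ by the same nonlinear terms, plus $\int|\Delta f|^2\varphi^2|\nabla\varphi|^2\le C(r)\ep_1$, plus $\int e^{4u}|\Delta f||f_t|\varphi^4$. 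By Step 1 the nonlinear terms are at most $C\ep_1^{1/2}X + C(r)\ep_1$. Choosing $\ep_1$ small enough that $C\ep_1^{1/2}\le 1/2$, I can absorb them and obtain
\[
X\le C(r)\ep_1 + C\int_{B_r} e^{4u}|\Delta f||f_t|\varphi^4 .
\]

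\textbf{Step 3: the time-derivative term (main obstacle).} By Cauchy--Schwarz,
\[
\int_{B_r} e^{4u}|\Delta f||f_t|\varphi^4\le \Big(\int_{B_r} e^{4u}|f_t|^2\varphi^4\Big)^{1/2}\Big(\int_{B_r} e^{4u}|\Delta f|^2\varphi^4\Big)^{1/2}.
\]
The first factor is bounded by the local energy estimate, \Cref{loc E finer} or \Cref{loc E cor}, giving $K_2$, which is small or at least controlled. For the second factor I would apply Hölder once more:
\[
\int_{B_r} e^{4u}|\Delta f|^2\varphi^4\le \Big(\int_{B_r} e^{8u}\Big)^{1/2}\Big(\int_{B_r}|\Delta f|^4\varphi^8\Big)^{1/2}.
\]
The quantity $\int|\Delta f|^4\varphi^8$ is controlled by Step 1, giving $C(X+C(r)\ep_1)$.

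To control $\int_{B_r}e^{8u}(t)$, I will use \Cref{e^pu} with $p=2$. This bounds it by $V_1(t_1)$ plus $C\int_{t_1}^{t}\int_{B_r}(|\nabla df|^2+|df|^4)^2\varphi^m$. That time integral is itself controlled by $\int(|\nabla df|^4+|df|^8)\varphi^8$, hence again by $X$, integrated in time. This circularity is where I expect the difficulty. My first attempt would be Young's inequality,
\[
ab\le \tfrac14 X + C K_2\, V_1^{1/2}(\cdots),
\]
combined with a Gronwall-type argument in $t$ on the quantity $\sup_t X$. The fallback is to accept a constant depending on $t_2-t_1$ and $V_1(t_1)$, as allowed by the form of $C_9$. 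Once $X\le C_9\ep_1$ is established, the bound on $\int|\nabla\Delta f|^2\varphi^4$ follows from Step 2, and the $L^4$ and $L^8$ bounds follow from Step 1.
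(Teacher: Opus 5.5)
Your plan is essentially the paper's proof. It uses the same absorption via \Cref{Sobolev}, \eqref{nabla^3 f}, \eqref{comm} and \Cref{loc W32 lem} to reach $\int|\nabla^2 df|^2\varphi^4\le C(r)\ep_1 + C\int e^{4u}|\Delta f||f_t|\varphi^4$, the same H\"older step against $\int e^{8u}$ and $\int|\Delta f|^4\varphi^8$ (using smallness of $\int_{B_r}e^{4u}|f_t|^2$ from the local energy estimate), and \Cref{e^pu} with $p=2$ closed by Gronwall.

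The paper's closure is a bit cleaner in two ways. It runs Gronwall on $\int_{B_r}(|\nabla df|^4+|df|^8)\varphi^8$, which is exactly what \Cref{e^pu} feeds back, so the inequality is linear; on your $X$ you would first have to square. It also splits the cutoff as $(\int e^{4u}|f_t|^2)^{1/2}(\int e^{8u}\varphi^8)^{1/4}(\int|\Delta f|^4\varphi^8)^{1/4}$, so that the $e^{8u}$ factor carries the $\varphi^8$ that \Cref{e^pu} needs.
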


\begin{proof}
Combining \eqref{ass} with \eqref{nabla sq f} and \eqref{df^6}, we have
\[
\begin{split}
\int_{B_r} |\nabla df|^2 |df|^2 \varphi^4 \leq& C \left( \int_{B_r} |df|^4 \right)^{\frac{1}{2}} \left( \int_{B_r} | \nabla^2 df|^2 \varphi^4 +  |\nabla df|^2 \varphi^2 |\nabla \varphi|^2 \right)\\
\leq& C \ep_1^{\frac{1}{2}} \int_{B_r} |\nabla^2 df|^2 \varphi^4 + C\frac{1}{r^2} \ep_1^{\frac{3}{2}}\\
\int_{B_r} |df|^6 \varphi^4 \leq & C \left( \int_{B_r} |df|^4 \right) \left( \int_{B_r} | \nabla^2 df|^2 \varphi^4 + |\nabla df|^2 \varphi^2 |\nabla \varphi|^2 \right)\\
&+ C \left( \int_{B_r} |df|^4 \right)^{\frac{1}{2}} \left( \int_{B_r} |df|^4 \varphi^2 |\nabla \varphi|^2 \right)\\
\leq& C \ep_1  \int_{B_r} |\nabla^2 df|^2 \varphi^4 + C \frac{1}{r^2} \ep_1^{\frac{3}{2}}.
\end{split}
\]
Combining these with \eqref{nabla^3 f} and \eqref{comm}, we obtain
\[
\begin{split}
\int_{B_r} |\nabla df|^2 |df|^2 \varphi^4 &+ \int_{B_r} |df|^6 \varphi^4\\
 \leq& C \ep_1^{\frac{1}{2}} \int_{B_r} |\nabla^2 df|^2 \varphi^4 + C\frac{1}{r^2} \ep_1^{\frac{3}{2}}\\
\leq& C \ep_1^{\frac{1}{2}} \int_{B_r} |\nabla \Delta f|^2 \varphi^4 + C \ep_1^{\frac{1}{2}} \int_{B_r} (|\nabla df|^2 |df|^2 + |df|^6) \varphi^4 + C (1 + \frac{1}{r^2}) \ep_1.
\end{split}
\]
Hence, for $\ep_1$ small enough, we can have $C \ep_1^{\frac{1}{2}} \leq \frac{1}{2}$ on RHS, and so
\begin{equation} \label{nabla df with ep}
\int_{B_r} |\nabla df|^2 |df|^2 \varphi^4 + \int_{B_r} |df|^6 \varphi^4 \leq C \ep_1^{\frac{1}{2}} \int_{B_r} |\nabla \Delta f|^2 \varphi^4 + C (1 + \frac{1}{r^2}) \ep_1.
\end{equation}
Now combining this with \Cref{loc W32 lem}, we obtain
\[
\begin{split}
\int_{B_r} |\nabla \Delta f|^2 \varphi^4 \leq& C \int_{B_r} |\nabla df|^2 |df|^2 \varphi^4 + C \int_{B_r} |df|^6 \varphi^4 + C \int_{B_r} |\Delta f|^2 \varphi^2 |\nabla \varphi|^2\\
&+ C \int_{B_r} e^{4u} |\Delta f| |f_t| \varphi^4\\
\leq& C \ep_1^{\frac{1}{2}} \int_{B_r} |\nabla \Delta f|^2 \varphi^4 + C (1 + \frac{1}{r^2}) \ep_1 + C \int_{B_r} e^{4u}|\Delta f| |f_t| \varphi^4.
\end{split}
\]
Again, for $\ep_1$ small enough, we can have $C \ep_1^{\frac{1}{2}} \leq \frac{1}{2}$ on RHS, and so
\begin{equation} \label{nabla^3 f with ep}
\int_{B_r} |\nabla^2 df|^2 \varphi^4, \int_{B_r} |\nabla \Delta f|^2 \varphi^4 \leq C (1 + \frac{1}{r^2}) \ep_1 + C \int_{B_r} e^{4u} |\Delta f| |f_t| \varphi^4.
\end{equation}
where estimate for $\int |\nabla^2 df|^2 \varphi^4$ comes from those for $\int |\nabla \Delta f|^2 \varphi^4$ and from \eqref{nabla^3 f} and \eqref{comm}.
At this moment, by Sobolev embedding, \Cref{Sobolev}, and above inequality, we have
\[
\begin{split}
\left( \int_{B_r} |\nabla df|^4 \varphi^8 \right)^{\frac{1}{2}} \leq& C \left( \int_{B_r} |\nabla^2 df|^2 \varphi^4 + |\nabla df|^2 \varphi^2 |\nabla \varphi|^2 \right) \\
\leq& C \left( 1 + \frac{1}{r^2} \right) \ep_1 + C \int_{B_r} e^{4u} |\Delta f| |f_t| \varphi^4\\
\left( \int_{B_r} |df|^8 \varphi^8 \right)^{\frac{1}{2}} \leq& C \left( 1 + \frac{1}{r^2} \right) \ep_1 + C \ep_1^{\frac{1}{2}} \int_{B_r} e^{4u} |\Delta f| |f_t| \varphi^4.
\end{split}
\]

Now we estimate $\int e^{4u} |\Delta f| |f_t| \varphi^4$.
Note that
\begin{equation} \label{nabla^3 f with f_t}
\begin{split}
\int_{B_r} e^{4u} |\Delta f| |f_t| \varphi^4 \leq& C \left( \int_{B_r} e^{4u} |f_t|^2 \right)^{\frac{1}{2}} \left( \int_{B_r} e^{8u} \varphi^8 \right)^{\frac{1}{4}} \left( \int_{B_r} |\Delta f|^4 \varphi^8 \right)^{\frac{1}{4}}\\
\leq& C \ep_1^{\frac{1}{2}} \left( \int_{B_r} e^{8u} \varphi^8 \right)^{\frac{1}{4}} \left( \int_{B_r} |\Delta f|^4 \varphi^8 \right)^{\frac{1}{4}}.
\end{split}
\end{equation}

By \Cref{e^pu},
\[
\begin{split}
\int_{B_r} e^{8u} \varphi^8 (t) \leq& \int_{B_r} e^{8u} \varphi^8 (t_1) + C \int_{t_1}^{t} \int_{B_r} \left( |\nabla df|^2 + |df|^4 \right)^2 \varphi^8\\
\leq& \int_{B_r} e^{8u} \varphi^8 (t_1) + C \int_{t_1}^{t} \int_{B_r} |\nabla df|^4 \varphi^8 + |df|^8 \varphi^8.
%\leq& \int_{B_R} e^{8u} \varphi^8 (t_1) + C \left( 1 + \frac{1}{R^4} \right) \ep_1^2 (t-t_1) + C \int_{t_1}^{t} \left( \int_{B_R} e^{4u} |\Delta f| |f_t| \varphi^4 \right)^2
\end{split}
\]
Now terms are circulating and we can apply Gronwall's inequality.
Let
\[
X(t) = \int_{B_r} |\nabla df|^4 \varphi^8 + |df|^8 \varphi^8 (t).
\]
Then above inequalities imply
\[
\begin{split}
X(t) \leq&  C \left( 1 + \frac{1}{r^4} \right) \ep_1^2 + C \left( \int_{B_R} e^{4u} |\Delta f| |f_t| \varphi^4 \right)^2\\
\leq& C \left( 1 + \frac{1}{r^4} \right) \ep_1^2 + C  \ep_1 \left( \int_{B_r} e^{8u} \varphi^8 \right)^{\frac{1}{2}} \left( \int_{B_r} |\Delta f|^4 \varphi^8 \right)^{\frac{1}{2}}\\
%\leq& C \left( 1 + \frac{1}{R^4} \right) \ep_1^2 + C  \ep_1  X(t)^{\frac{1}{2}} \left( \int_{B_R} e^{8u}\varphi^8 (t_1) + C \int_{t_1}^{t} X(s) ds \right)^{\frac{1}{2}}\\
\leq& C \left( 1 + \frac{1}{r^4} \right) \ep_1^2 + \frac{1}{2} X(t) + C \ep_1^2 \left(  \int_{B_r} e^{8u}\varphi^8 (t_1) + C \int_{t_1}^{t} X(s) ds \right).
\end{split}
\]
Hence, we obtain Gronwall's type inequality
\[
X(t) \leq D_1 + D_2 \int_{t_1}^{t} X(s)ds
\]
with
\[
D_1 = C \left( 1 + \frac{1}{r^4} \right) \ep_1^2 + C \ep_1^2 \int_{B_r} e^{8u} (t_1), \quad D_2 = C \ep_1^2.
\]
This implies, by Gronwall's inequality,
\[
X(t) \leq D_1 \left( 1 + D_2 (t_2-t_1) e^{D_2 (t_2-t_1)} \right) = C_9 \ep_1^2
\]
for some constant $C_9 = C_9(t_2, r, t_2-t_1, V_1(t_1))$.

Finally, for $\int |\nabla \Delta f|^2 \varphi^4$ and $\int |\nabla^2 df|^2 \varphi^4$, from \eqref{nabla^3 f with ep} and \eqref{nabla^3 f with f_t},
\[
\begin{split}
\int_{B_r} |\nabla^2 df|^2 \varphi^4, \int_{B_r} |\nabla \Delta f|^2 \varphi^4 \leq& C (1 + \frac{1}{r^2}) \ep_1 + C \ep_1^{\frac{1}{2}} \left( V_1(t_1) + C (t_2-t_1) C_9 \right)^{\frac{1}{4}} C_9^{\frac{1}{4}} \ep_1^{\frac{1}{2}}\\
\leq & C_9 \ep_1.
\end{split}
\]
(If needed, re-define $C_9$.)
This completes the proof.
\end{proof}

Now we have a better control on local quantities.
Namely, we can now bounds $\int |\nabla df|^4$ and $\int |\nabla^2 df|^2$.
But by Sobolev embedding, bounds for $\int |\nabla df|^4$ is not enough to guarantee $\sup |df|<C$.
In \Cref{nabla^3 f nabla^2 f est}, we will obtain bounds for $\int |\nabla df|^6$ which is now enough to imply $\sup |df| < C$.

We first introduce the following algebraic style lemma.

\begin{lemma}
For $H$ a $n \times n$ matrix and $v$ a $n$-dimensional vector, we have
\begin{equation} \label{eq alg}
\left| Mv \right|^2 \leq (n-1) |H|^2 |v|^2
\end{equation}
where $M = H - \tr(H) I$ and $|H|^2 = \sum_{j,k} H_{jk}^2$ is the Frobenius norm squared.
\end{lemma}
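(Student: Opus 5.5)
The plan is to prove \eqref{eq alg} by duality rather than by estimating the entries of $Mv$ directly. Fix $v$ and consider the map $H \mapsto Mv = Hv - \tr(H)\, v$. It is linear from $n\times n$ matrices with the Frobenius inner product $\langle H, G\rangle = \sum_{j,k} H_{jk}G_{jk}$ to $\R^n$. So
\[
|Mv| = \sup_{|w|=1} \langle Mv, w\rangle,
\]
and it suffices to bound $\langle Mv, w\rangle$ for each unit vector $w$ by $\sqrt{n-1}\,|H|\,|v|$.

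The key step is to rewrite the pairing as a Frobenius inner product with a matrix independent of $H$. Using $\langle Hv, w\rangle = \sum_{j,k} H_{jk} w_j v_k = \langle H, w v^T\rangle$ and $\tr(H) = \langle H, I\rangle$, I get
\[
\langle Mv, w\rangle = \langle H, \, w v^T - \langle v, w\rangle I \rangle .
\]
Cauchy--Schwarz for the Frobenius inner product then gives $\langle Mv,w\rangle \leq |H|\, |w v^T - \langle v,w\rangle I|$.

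It remains to compute this Frobenius norm. Set $c = \langle v, w\rangle$ and expand:
\[
|w v^T - cI|^2 = |w|^2|v|^2 - 2c\,\tr(w v^T) + n c^2 = |v|^2 + (n-2)c^2 .
\]
Here I used $|w|=1$ and $\tr(wv^T) = c$. Since $c^2 \leq |v|^2$ and $n-2 \geq 0$ (the case of interest is $n=4$, and any $n\ge 2$ works), this is at most $(n-1)|v|^2$. Taking the supremum over $w$ yields $|Mv|^2 \leq (n-1)|H|^2|v|^2$.

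The only real obstacle is choosing the route. A naive entrywise Cauchy--Schwarz on $(Mv)_j = \sum_{k\neq j} H_{jk}v_k - \bigl(\sum_{l\neq j} H_{ll}\bigr)v_j$ loses a factor of about $2$. Bounding by the Frobenius norm of $M$ also fails, already for $H=I$, where $|M|_F^2 = n(n-1)^2$ exceeds $(n-1)|H|^2 = n(n-1)$. The duality argument avoids both losses and is sharp: equality holds for $H = I$.
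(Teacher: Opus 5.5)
Your proof is correct, and it takes a genuinely different route from the paper's. The paper argues spectrally. With $\lambda_i$ the eigenvalues of $H$, the eigenvalues of $M$ are $\mu_i=-\sum_{j\neq i}\lambda_j$, and the paper chains $|Mv|^2\le\|M\|_{op}^2|v|^2\le(\max_i\mu_i^2)|v|^2\le(n-1)\sum_i\lambda_i^2|v|^2=(n-1)|H|^2|v|^2$. The constant comes from Cauchy--Schwarz over the $n-1$ eigenvalues $\lambda_j$, $j\neq i$. This is quick and shows where $n-1$ comes from. However, the steps $\|M\|_{op}=\max_i|\mu_i|$ and $\sum_i\lambda_i^2=|H|^2$ hold only for symmetric (more generally, normal) $H$. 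For $H=e_1e_2^T$ all eigenvalues vanish, yet $Mv=v_2e_1\neq 0$. That is enough for the paper, since the lemma is only applied to the symmetric Hessian $H_{jk}=\nabla_j\nabla_k f$. Your duality argument, by contrast, uses no symmetry: you pair $Mv$ with a unit vector $w$, write the pairing as $\langle H, wv^T-\langle v,w\rangle I\rangle$, and compute $|wv^T-cI|^2=|v|^2+(n-2)c^2$. It therefore proves the lemma as literally stated, for arbitrary real $n\times n$ matrices.

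One side remark in your last paragraph is wrong, though it does not affect the proof. Equality does not hold at $H=I$, where $|Mv|^2=(n-1)^2|v|^2<n(n-1)|v|^2$. Equality in your chain forces $w=\pm v/|v|$ and $H$ parallel to $wv^T-cI$. For instance, $H=I-|v|^{-2}vv^T$ gives $|H|^2=n-1$ and $Mv=-(n-1)v$. So the constant $n-1$ is indeed sharp, just with a different extremizer.
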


\begin{proof}
Denote $\lambda_i$ be the eigenvalues of $H$.
Then $\mu_i = \lambda_i - \sum_{i} \lambda_i$ is the eigenvalues of $M$.
Now using the operator norm $\|M\|_{op}$ with $|Mv|^2 \leq \|M\|_{op}^2 |v|^2$,
\[
\begin{split}
 \left| Mv \right|^2 \leq&  \|M\|_{op}^2 |v|^2 \leq  \left( \max_{i} \mu_i^2 \right) |v|^2 \leq \left( (n-1) \sum_i \lambda_i^2 \right) |v|^2 = (n-1) |H|^2 |v|^2
\end{split}
\]
using Cauchy-Swartz inequality.
\end{proof}

Note that above inequality is still valid for higher order tensor, say $H = H^{(i)}_{jk}$, $v = v^{(i)}_k$ for $i=1, \ldots, N$ with $Mv$ is replaced by $\langle M, v \rangle = \sum_i M^{(i)} v^{(i)}$.

The next lemma is a slightly higher estimate than \Cref{loc W32 lem}.

\begin{lemma} \label{loc W32 lem 2}
(Local $W^{3,2}$ estimate with extra factor $|\nabla df|^p$)
Let $(f,u)$ be a smooth solution of \eqref{eq1} on $M \times [t_1,t_2]$.
Then for any $p<\frac{4}{3}$,
\begin{equation} \label{loc W32-2}
\begin{split}
\int_{M} |\nabla^2 df|^2 |\nabla df|^p \varphi^4 \leq& C \int_{M} e^{4u} |f_t| |\nabla df|^{p+1} \varphi^4 + C \int_{M} (1 + |\nabla df|^p |df|^6 + |\nabla df|^{p+3} ) \varphi^4\\
& + C \int_{M} |\nabla df|^{p+2} \varphi^2 |\nabla \varphi|^2.
\end{split}
\end{equation}
where $C$ depends on $p$.
\end{lemma}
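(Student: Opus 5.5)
We prove \eqref{loc W32-2} by mimicking \Cref{loc W32 lem}. We test a fourth-order identity against $\Delta f\,|\nabla df|^p\varphi^4$, integrate by parts once, and absorb the top-order term. The algebraic lemma \eqref{eq alg} controls the terms where the derivative falls on the weight $|\nabla df|^p$; this is where the restriction $p<\frac43$ comes from.

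\textbf{Step 1 (testing the equation).} From the first equation of \eqref{eq1} we have $-\Delta^2 f = e^{4u}f_t - B$, with $B$ as in \eqref{B perp}. We multiply by $\Delta f\,|\nabla df|^p\varphi^4$ and integrate. One integration by parts on $\Delta^2 f$ gives
\[
\int_M |\nabla\Delta f|^2|\nabla df|^p\varphi^4 + \int_M \langle \nabla\Delta f, \Delta f\,\nabla(|\nabla df|^p)\rangle\varphi^4 + 4\int_M\langle\nabla\Delta f,\Delta f\rangle|\nabla df|^p\varphi^3\nabla\varphi .
\]
The right-hand side is $-\int e^{4u}\langle f_t,\Delta f\rangle|\nabla df|^p\varphi^4$ plus the $B$ terms. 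Since $|\Delta f|\le 2|\nabla df|$, the $f_t$ term is bounded by $C\int e^{4u}|f_t||\nabla df|^{p+1}\varphi^4$.

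For the $B$ terms we reuse the pointwise bounds $|\nabla P|\le C_N|df|$ and $|\Delta P|\le C_N(|\Delta f|+|df|^2)$. The divergence-form pieces $\Delta(A(df,df))$ and $2\nabla\langle\Delta f,\nabla P\rangle$ are integrated by parts once. This produces integrands such as $(|\nabla df||df|+|df|^3)\,|\nabla(\Delta f|\nabla df|^p\varphi^4)|$ and $|\Delta f||df|\,|\nabla(\Delta f|\nabla df|^p\varphi^4)|$. All of these are handled by Young's inequality: we keep $\delta\int|\nabla^2 df|^2|\nabla df|^p\varphi^4$ on the left. The lower-order remainders have the forms $|\nabla df|^{p+2}|df|^2\varphi^4$, $|df|^6|\nabla df|^p\varphi^4$, $|\nabla df|^{p+3}\varphi^4$ (from the cubic term $|\Delta f|^2|\nabla df|^p$, paired against $|\Delta P|$) and $|\nabla df|^{p+2}\varphi^2|\nabla\varphi|^2$. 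The term $|\nabla df|^{p+2}|df|^2$ is split by Young's inequality into $|\nabla df|^{p+3}+|df|^{2(p+3)}$. Using $|df|^6|\nabla df|^p$ where possible, we trade any leftover power of $|df|$ into the allowed set $1+|\nabla df|^p|df|^6+|\nabla df|^{p+3}$ by a weighted Young's inequality.

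\textbf{Step 2 (from $\nabla\Delta f$ to $\nabla^2 df$, and the weight derivative).} Integration by parts, in the form of the argument for \eqref{nabla^3 f}, now carrying the weight $|\nabla df|^p$, together with \eqref{comm}, converts $\int|\nabla^2 df|^2|\nabla df|^p\varphi^4$ into $\int|\nabla\Delta f|^2|\nabla df|^p\varphi^4$. The price is again a term in which the derivative hits $|\nabla df|^p$, plus curvature terms bounded by $|df|^2|\nabla df|^{p+2}$, $|\nabla df|^{p+2}$, $|df|^8|\nabla df|^{p}$ and similar. Each of these is reduced to the admissible list by Young's inequality; the constant $1$ absorbs the lowest-order remainders.

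\textbf{Step 3 (the main obstacle).} Both weight-derivative terms have the form
\[
p\int_M |\nabla df|^{p-2}\,\langle \nabla^2 df,\nabla df\rangle\cdot(\text{stuff})\,\varphi^4 .
\]
Combined, they appear as $p\int|\nabla df|^{p-2}\langle M\,v\rangle$, where $H=\nabla^2 df$ (contracted appropriately), $M=H-\operatorname{tr}(H)I$ and $v=\nabla df$. Applying \eqref{eq alg} with $n=4$ (the tensor version noted after it) bounds this by
\[
p\sqrt{3}\int_M|\nabla^2 df|^2|\nabla df|^p\varphi^4
\]
up to lower-order commutator errors. We need the resulting coefficient to be strictly less than the coefficient $1$ of the good term; a careful bookkeeping of the constants should give a condition of the form $\tfrac34 p<1$, i.e.\ $p<\tfrac43$. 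Once this holds, we absorb the term, choose $\delta$ small, and rearrange to obtain \eqref{loc W32-2} with $C=C(p)$.

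I expect this step to be the delicate one. The sharp constant from \eqref{eq alg} must be tracked exactly rather than absorbed into generic constants, because the threshold $p<\frac43$ depends on it.
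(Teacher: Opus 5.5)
Your outline follows the paper's route: weighted integration by parts from $\nabla^2 df$ down to $\nabla\Delta f$, then testing the equation against $-\Delta f\,|\nabla df|^p\varphi^4$. But Step~3, the only place where $p<\frac43$ enters, is not carried out, and the version you write down does not work. Your bound $p\sqrt3\int_M|\nabla^2 df|^2|\nabla df|^p\varphi^4$ can be absorbed only for $p<1/\sqrt3$. That excludes $p=1$, which is the case actually used in the proposition giving the $L^6$ bound on $\nabla df$. ``Careful bookkeeping should give $\frac34p<1$'' is an assertion, not an argument.

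The missing idea is that one weight-derivative term has a good sign. Integrating $\int_M|\nabla^2 df|^2|\nabla df|^p\varphi^4$ by parts produces
\[
-p\int_M |w|^2\,|\nabla df|^{p-2}\varphi^4, \qquad w_j=\langle \nabla_j\nabla df,\nabla df\rangle ,
\]
which you treat as a ``price''. In the paper, two terms combine: the cross term from that integration by parts (after commuting $\Delta\nabla_k f$ into $\nabla_k\Delta f$), and the term from testing with $\Delta f$. Together they give
\[
p\int_M \sum_{j,k}\big\langle \nabla_k\Delta f,\ \nabla_j\nabla_k f-\delta_{jk}\Delta f\big\rangle\, w_j\,|\nabla df|^{p-2}\varphi^4 .
\]
So \eqref{eq alg} is applied with $H$ the Hessian $\nabla_j\nabla_k f$ (hence $|H|=|\nabla df|$) and $v=\nabla\Delta f$, not with $H=\nabla^2 df$ and $v=\nabla df$. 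Young's inequality $p\,ab\le\frac p4a^2+pb^2$ then makes the $p|w|^2$ piece cancel exactly against the good term. \eqref{eq alg} bounds the rest by $\frac{3p}{4}\int_M|\nabla\Delta f|^2|\nabla df|^p\varphi^4$, which is where $\frac{3p}{4}$ comes from.

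There are also smaller points. Your split $|\nabla df|^{p+2}|df|^2\le|\nabla df|^{p+3}+|df|^{2(p+3)}$ creates a term outside the admissible list $1+|\nabla df|^p|df|^6+|\nabla df|^{p+3}$. It is uncontrolled where $|\nabla df|$ is small and $|df|$ large, and no weighted Young inequality moves it back. Instead, use $|\nabla df|^2|df|^2\le|\nabla df|^3+|df|^6$ multiplied by $|\nabla df|^p$. Likewise, pair the commutator errors from \eqref{comm} with $|\nabla df|^{p+1}$ rather than with $|\nabla^2 df||\nabla df|^p$, so that $|df|^8|\nabla df|^p$ never appears.

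Finally, the cancellation above yields a multiple of $\int_M|\nabla\Delta f|^2|\nabla df|^p\varphi^4$, while the left-hand side is $\int_M|\nabla^2 df|^2|\nabla df|^p\varphi^4$. The paper passes from one to the other without comment. This is not a pointwise comparison with constant $1$: for $f=x_1|x|^2$ on flat $\mathbb{R}^4$ one has $|\nabla\Delta f|^2=2|\nabla^3 f|^2$. So if you track constants sharply, as you say you intend to, this step needs its own justification.
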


\begin{proof}
Proof for \eqref{loc W32-2} is similar, but requires more elaborate treat for the terms.
Integration by parts and \eqref{comm} gives
\begin{equation} \label{nabla^3 f nabla^2 f 1}
\begin{split}
&\int_{M} |\nabla^2 df|^2 |\nabla df|^p \varphi^4\\
 \leq& -\int_{M} \langle \nabla \Delta df, \nabla df \rangle |\nabla df|^p \varphi^4 + C \int_{M} (|df|^2 |\nabla df| + |\nabla df| + |df|^4 + |df| ) |\nabla df|^{p+1} \varphi^4\\
&- p \int_{M} (\langle \nabla^2 df, \nabla df \rangle)^2 |\nabla df|^{p-2} \varphi^4 + 4 \int_{M} |\nabla^2 df| |\nabla df|^{p+1} \varphi^3 |\nabla \varphi|\\
\leq& \int_{M} |\Delta df|^2 |\nabla df|^p \varphi^4 + p \int_{M} \langle \Delta df, \nabla df \rangle \langle \nabla^2 df, \nabla df \rangle |\nabla df|^{p-2} \varphi^4 \\
& + 4 \int_{M} \langle \Delta df, \nabla df \rangle |\nabla df|^p \varphi^3 \nabla \varphi + C \int_{M} (|df|^2 |\nabla df| + |\nabla df| + |df|^4 + |df| ) |\nabla df|^{p+1} \varphi^4\\
&- p \int_{M} (\langle \nabla^2 df, \nabla df \rangle)^2 |\nabla df|^{p-2} \varphi^4 + 4 \int_{M} |\nabla^2 df| |\nabla df|^{p+1} \varphi^3 |\nabla \varphi|\\
\leq& V + VI + VII  +  \delta \int_{M} |\nabla^2 df|^2 |\nabla df|^p \varphi^4 + C \int_{M} (1 + |\nabla df|^p |df|^6 + |\nabla df|^{p+3} ) \varphi^4\\
& + C \int_{M} |\nabla df|^{p+2} \varphi^2 |\nabla \varphi|^2
\end{split}
\end{equation}
where
\[
\begin{split}
V &= \int_{M} |\Delta df|^2 |\nabla df|^p \varphi^4\\
VI &=  p \int_{M} \langle \Delta df, \nabla df \rangle \langle \nabla^2 df, \nabla df \rangle |\nabla df|^{p-2} \varphi^4 \\
VII &= -p \int_{M} (\langle \nabla^2 df, \nabla df \rangle)^2 |\nabla df|^{p-2} \varphi^4.
\end{split}
\]
Now we have
\[
\begin{split}
V + VI  
%=& \int_{M} |\Delta df|^2 |\nabla df| \varphi^4 + \int_{M} \langle \Delta df, \nabla df \rangle \langle \nabla^2 df, \nabla df \rangle |\nabla df|^{-1} \varphi^4\\
=& \int_{M} \langle \Delta \nabla_k f, \nabla_j \left( \nabla_j \nabla_k f |\nabla df|^p \right) \rangle \varphi^4\\
=& \int_{M} \langle \nabla_k \Delta f + Ric_{ki} \nabla_i f, \nabla_j \left( \nabla_j \nabla_k f |\nabla df|^p \right) \rangle \varphi^4\\
\leq& \int_{M} \langle \nabla \Delta f, \Delta df \rangle |\nabla df|^p \varphi^4 + p \int_{M} \langle \nabla_k \Delta f, \nabla_j \nabla_k f \rangle \langle \nabla_j \nabla df, \nabla df \rangle |\nabla df|^{p-2} \varphi^4\\
&+ C \int_{M} |df| |\Delta df| |\nabla df|^p \varphi^4 + C \int_{M} |df| |\nabla df|^p |\nabla^2 df| \varphi^4\\
\leq& \int_{M} |\nabla \Delta f|^2 |\nabla df|^p \varphi^4 + \int_{M} \langle \nabla_k \Delta f, Ric_{ki} \nabla_i f \rangle |\nabla df|^p \varphi^4\\
&+ p \int_{M} \langle \nabla_k \Delta f, \nabla_j \nabla_k f \rangle \langle \nabla_j \nabla df, \nabla df \rangle |\nabla df|^{p-2} \varphi^4\\
&+C \int_{M} |df| |\Delta df| |\nabla df|^p \varphi^4 + C \int_{M} |df| |\nabla df|^p |\nabla^2 df| \varphi^4.
\end{split}
\]
By the integration by parts, the first term above is
\[
\begin{split}
\int_{M} |\nabla \Delta f|^2 |\nabla df|^p \varphi^4 &= -\int_{M} \langle \Delta^2 f, \Delta f \rangle |\nabla df|^p \varphi^4\\
& - p \int_{M} \langle \nabla_k \Delta f, \Delta f \rangle \langle \nabla_k \nabla df, \nabla df \rangle |\nabla df|^{p-2} \varphi^4\\
&- 4 \int_{M} \langle \nabla_k \Delta f, \Delta f \rangle |\nabla df|^p \varphi^3 \nabla_k \varphi.
\end{split}
\]
Combining these together, we have
\[
\begin{split}
V + VI \leq& -\int_{M} \langle \Delta^2 f, \Delta f \rangle |\nabla df|^p \varphi^4 -  p \int_{M} \langle \nabla_k \Delta f, \Delta f \rangle \langle \nabla_k \nabla df, \nabla df \rangle |\nabla df|^{p-2} \varphi^4\\
&+p \int_{M} \langle \nabla_k \Delta f, \nabla_j \nabla_k f \rangle \langle \nabla_j \nabla df, \nabla df \rangle |\nabla df|^{p-2} \varphi^4\\
& + \delta \int_{M} |\nabla^2 df|^2 |\nabla df|^p \varphi^4 + C \int_{M} (1 + |\nabla df|^p |df|^6 + |\nabla df|^{p+3}) \varphi^4\\
& + C \int_{M} |\nabla df|^{p+2} \varphi^2 |\nabla \varphi|^2\\
=& I_1 + I_2 + I_3 + I_4 + I_5.
\end{split}
\]
Now $I_2, I_3$ above can be estimate by, using $H_{jk} = \nabla_j \nabla_k f$, $v_k = \nabla_k \Delta f$ and $w_k = \langle \nabla_k \nabla df, \nabla df \rangle$,
\[
\begin{split}
I_2 + I_3 =& -p \int_{M} \langle v_k, \tr(H) I \rangle w_k |\nabla df|^{p-2} \varphi^4 + p \int_{M} \langle v_j, H_{jk} \rangle w_j |\nabla df|^{p-2} \varphi^4\\
\leq& \frac{p}{4} \int_{M} \left( \langle v_k, H_{jk} - \tr(H) I \rangle \right)^2 |\nabla df|^{p-2} \varphi^4 + p \int_{M} |w_k|^2 |\nabla df|^{p-2} \varphi^4\\
\leq& \frac{3p}{4} \int_{M} |\nabla \Delta f|^2 |\nabla df|^p \varphi^4 + p \int_{M} |w_k|^2 |\nabla df|^{p-2} \varphi^4
\end{split}
\]
where the last equality is obtained by \eqref{eq alg} with $n=4$.
In conclusion, \eqref{nabla^3 f nabla^2 f 1} becomes
\[
\begin{split}
\int_{M} |\nabla^2 df|^2 |\nabla df|^p \varphi^4 \leq& -\int_{M} \langle \Delta^2 f, \Delta f \rangle |\nabla df|^p \varphi^4 + \left( \frac{3p}{4} + \delta \right) \int_{M} |\nabla^2 df|^2 |\nabla df|^p \varphi^4\\
& + C \int_{M} (1 + |\nabla df|^p |df|^6 + |\nabla df|^{p+3} ) \varphi^4 + C \int_{M} |\nabla df|^{p+2} \varphi^2 |\nabla \varphi|^2.
\end{split}
\]
Since we assume $p<\frac{4}{3}$, the term $\int |\nabla^2 df|^2 |\nabla df|^p \varphi^4$ can be absorbed into the left hand sides for $\delta$ small enough.

On the other hand, multiply the first equation in \eqref{eq1} with $-\Delta f |\nabla df|^p \varphi^4$ to get
\[
\begin{split}
&-\int_{M} \langle \Delta^2 f, \Delta f \rangle |\nabla df|^p \varphi^4\\
  =& \int_{M} e^{4u} \langle f_t, \Delta f \rangle |\nabla df|^p \varphi^4 - \int_{M} \langle \Delta (A(df,df)), \Delta f \rangle |\nabla df|^p \varphi^4\\
&+ \int_{M} \langle \langle \Delta f, \Delta P \rangle, \Delta f \rangle |\nabla df|^p \varphi^4 - 2 \int_{M} \langle \nabla \langle \Delta f, \nabla P \rangle, \Delta f \rangle |\nabla df|^p \varphi^4\\
\leq& \int_{M} e^{4u} |f_t|  |\Delta f| |\nabla df|^p \varphi^4\\
&+  \int_{M} \langle 2A (\nabla df, df) + DA(df,df) \cdot df, \nabla \Delta f |\nabla df|^p \varphi^4 + 4 \Delta f |\nabla df|^p \varphi^3 \nabla \varphi \rangle\\
&+ \int_{M} \langle 2A(\nabla df, df ) + DA(df,df) \cdot df, p \Delta f \langle \nabla^2 df, \nabla df \rangle |\nabla df|^{p-2} \varphi^4 \rangle\\
&+ 3C_N \int_{M} (|\Delta f|^3  + |\Delta f|^2 |df|^2) |\nabla df|^p \varphi^4 + 2C_N \int_{M} |\nabla \Delta f| |\Delta f| |df| |\nabla df|^p \varphi^4\\
=& \int_{M} e^{4u} |f_t|  |\Delta f| |\nabla df|^p \varphi^4 + II_{2} + II'_{2} + III_{2} + 2IV_{2}.
\end{split}
\]

As above, each term can be estimate by
\[
\begin{split}
II_{2} \leq&  C_N \int_{M} (2|\nabla df| |df| + |df|^3) (|\nabla \Delta f| |\nabla df|^p \varphi^4 + 4|\Delta f|  |\nabla df|^p \varphi^3 |\nabla \varphi|) \\
\leq& \delta \int_{M} |\nabla \Delta f|^2 |\nabla df|^p \varphi^4 + C \int_{M} |df|^6 |\nabla df|^p \varphi^4 + C \int_{M} |\nabla df|^{p+2} |df|^2 \varphi^4  \\
&+ C \int_{M} |\Delta f|^2 |\nabla df|^p \varphi^2 |\nabla \varphi|^2\\
II'_{2} \leq& C_N \int_{M} (2 |\nabla df| |df| + |df|^3) (|\nabla^2 df| |\nabla df^p| \varphi^4)\\
\leq& \delta \int_{M} |\nabla^2 df|^2 |\nabla df|^p \varphi^4 + C \int_{M} |\nabla df|^{p+2} |df|^2 \varphi^4 + C \int_{M} |df|^6 |\nabla df|^p \varphi^4\\
III_{2} \leq& C \int_{M} |\nabla df|^{p+3} \varphi^4 + C \int_{M} |\nabla df|^{p+2} |df|^2 \varphi^4\\
IV_{2} \leq& \delta \int_{M} |\nabla^2 df|^2 |\nabla df|^p \varphi^4 + C \int_{M} |\nabla df|^{p+2} |df|^2 \varphi^4.
\end{split}
\]
Combining all together, we obtain
\[
\begin{split}
\int_{M} |\nabla^2 df|^2 |\nabla df|^p \varphi^4 \leq& C \int_{M} e^{4u} |f_t| |\nabla df|^{p+1} \varphi^4 + C \int_{M} (1 + |\nabla df|^p |df|^6 + |\nabla df|^{p+3} ) \varphi^4\\
& + C \int_{M} |\nabla df|^{p+2} \varphi^2 |\nabla \varphi|^2.
\end{split}
\]
This completes the proof.
\end{proof}

\begin{prop} \label{nabla^3 f nabla^2 f est}
There exists $\ep_1>0$ such that the following holds:

Let $(f,u)$ be a smooth solution of \eqref{eq1} on $M \times [t_1,t_2]$ with
\begin{equation*}
\sup_{t \in [t_1,t_2]} \int_{B_r} 1 + |df|^4 + |\nabla df|^2 \leq \ep_1.
\end{equation*}
Then
\[
\begin{split}
\int_{B_r} |\nabla df|^6 \varphi^8, \int_{B_r} |df|^{12} \varphi^8 \leq& C_{10} \ep_1^2 \\
\int_{B_r} |\nabla^2 df|^2 |\nabla df| \varphi^4 \leq& C_{10} \ep_1
\end{split}
\]
for some constant
\[
C_{10} := C_{10}(t_2,r,t_2-t_1, V_2(t_1))
\]
where
\[
V_2(t_1) = \int_{B_r} e^{12u}(t_1).
\]
\end{prop}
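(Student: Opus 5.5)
The plan is to copy the structure of the proof of \Cref{nabla^3 f est}, one integrability level higher. \Cref{loc W32 lem 2} with $p=1$ (admissible since $1<\frac{4}{3}$) plays the role that \Cref{loc W32 lem} played there, and the estimates of \Cref{nabla^3 f est} serve as lower-order input.

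First I would control the nonlinear terms on the right of \eqref{loc W32-2} with $p=1$. These are $\int |\nabla df|^4\varphi^4$, $\int |\nabla df||df|^6\varphi^4$ and $\int|\nabla df|^3\varphi^2|\nabla\varphi|^2$. I would apply Sobolev $W^{1,2}_0\hookrightarrow L^4$ to $|\nabla df|^{3/2}\varphi^2$, which gives
\[
\Bigl(\int_{B_r}|\nabla df|^6\varphi^8\Bigr)^{\frac12}\leq C\int_{B_r}|\nabla^2 df|^2|\nabla df|\varphi^4 + C\int_{B_r}|\nabla df|^3\varphi^2|\nabla\varphi|^2 .
\]
Then I would use H\"older with the small quantity $\int_{B_r}|\nabla df|^2\leq\ep_1$, or with the bound $\int|\nabla df|^4\varphi^8\leq C_9\ep_1^2$, to write
\[
\int|\nabla df|^4\varphi^4\leq \Bigl(\int_{B_r}|\nabla df|^2\Bigr)^{\frac12}\Bigl(\int|\nabla df|^6\varphi^8\Bigr)^{\frac12}\leq C\ep_1^{\frac12}\int|\nabla^2 df|^2|\nabla df|\varphi^4+\cdots .
\]
This can be absorbed once $\ep_1$ is small. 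Similarly, $|df|^{12}$ is handled by applying Sobolev to $|df|^3\varphi^2$, which produces $\int|\nabla df|^2|df|^4\varphi^4$. That term I would bound by H\"older using $\int|df|^8\varphi^8\leq C_9\ep_1^2$ and the $|\nabla df|^6$ quantity. The mixed term $|\nabla df||df|^6$ I would split by Young into $|\nabla df|^4$ and $|df|^8$, and the latter is already controlled. Cut-off terms contribute constants times powers of $1/r$ multiplied by $\ep_1$-small quantities. The result should be
\[
\int_{B_r}|\nabla^2 df|^2|\nabla df|\varphi^4\leq C(1+r^{-k})\ep_1 + C\int_{B_r} e^{4u}|f_t||\nabla df|^2\varphi^4 ,
\]
and the same right-hand side (squared) should bound $X(t):=\int_{B_r}(|\nabla df|^6+|df|^{12})\varphi^8$.

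The main obstacle is the $f_t$ term, which I would treat as in \eqref{nabla^3 f with f_t}. By H\"older,
\[
\int e^{4u}|f_t||\nabla df|^2\varphi^4\leq\Bigl(\int_{B_r}e^{4u}|f_t|^2\Bigr)^{\frac12}\Bigl(\int_{B_r}e^{12u}\varphi^{8}\Bigr)^{\frac16}\Bigl(\int|\nabla df|^6\varphi^8\Bigr)^{\frac13},
\]
with cut-off powers adjusted as needed. The first factor is at most $\ep_1^{1/2}$ by \Cref{loc E cor}. For the second factor I would invoke \Cref{e^pu} with $p=3$, which gives
\[
\int e^{12u}\varphi^8(t)\leq V_2(t_1)+C\int_{t_1}^t\int(|\nabla df|^2+|df|^4)^3\varphi^8\leq V_2(t_1)+C\int_{t_1}^tX(s)\,ds .
\]
Squaring the previous bound and applying Young (the exponents $\frac13\cdot 2<1$ leave room) yields
\[
X(t)\leq \tfrac12 X(t)+D_1+D_2\int_{t_1}^t X(s)\,ds ,
\]
with $D_1\sim(1+r^{-k})\ep_1^2+\ep_1^{2}V_2(t_1)$ and $D_2\sim\ep_1^{2}$. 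Gronwall then gives $X\leq C_{10}\ep_1^2$.

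The delicate point is checking that the exponents in this Young step close so that $X$ appears at most linearly. The $f_t$ contribution is of order $\ep_1^{1/2}\,V^{1/6}X^{1/3}$. After squaring it becomes $\ep_1 V^{1/3}X^{2/3}$, which I would split as $\tfrac12X+C\ep_1^3V$. If this closes, as I expect, then substituting the $X$ bound back into the first-step inequality gives $\int|\nabla^2df|^2|\nabla df|\varphi^4\leq C_{10}\ep_1$, after redefining $C_{10}$.
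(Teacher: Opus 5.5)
Your proposal is correct and follows the paper's proof essentially step for step. The shared steps are \eqref{loc W32-2} with $p=1$, the three-factor H\"older bound $\bigl(\int e^{4u}|f_t|^2\bigr)^{1/2}\bigl(\int|\nabla df|^6\varphi^8\bigr)^{1/3}\bigl(\int e^{12u}\varphi^8\bigr)^{1/6}$ on the $f_t$ term, the $e^{pu}$ estimate with $p=3$, Sobolev embedding for $|\nabla df|^6$ and $|df|^{12}$, the Young split $\ep_1 V^{1/3}X^{2/3}\le\frac12X+C\ep_1^3V$, and Gronwall; the only slip is bookkeeping, since that split gives $D_2\sim\ep_1^3$ (as in the paper) rather than $\ep_1^2$, which does not affect the argument.
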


\begin{proof}
As above, by \eqref{loc W32-2} with $p=1$ and \Cref{nabla^3 f est},
\begin{equation} \label{nabla^3 f nabla^2 f with ep}
\begin{split}
\int_{B_r} |\nabla^2 df|^2 |\nabla df| \varphi^4 \leq& C \int_{B_r} (1 + |\nabla df|^4 + |df|^8) \varphi^4 + C \int_{B_r} |\nabla df|^3 \varphi^2 |\nabla \varphi|^2\\
& +  C \int_{B_r} e^{4u} |f_t| |\nabla df|^2 \varphi^4\\
\leq& C \ep_1 + C \frac{1}{r^2} \ep_1^{\frac{3}{2}} + C \int_{B_r} e^{4u} |f_t| |\nabla df|^2 \varphi^4.
\end{split}
\end{equation}
Here we use interpolation inequality
\[
\int_{B_r} |\nabla df|^3 \varphi^2 |\nabla \varphi|^2 \leq C \frac{1}{r^2} \left( \int_{B_r} |\nabla df|^2 \right)^{\frac{1}{2}} \left( \int_{B_r} |\nabla df|^4 \varphi^4 \right)^{\frac{1}{2}} \leq C \frac{1}{r^2} \ep_1^{\frac{3}{2}}.
\]
For the last term, we can estimate it by
\begin{equation} \label{nabla^3 f nabla^2 f with f_t}
\begin{split}
\int_{B_r} e^{4u} |f_t| |\nabla df|^2 \varphi^4 \leq& C \left( \int_{B_r} e^{4u} |f_t|^2 \right)^{\frac{1}{2}} \left( \int_{B_r} |\nabla df|^6 \varphi^8 \right)^{\frac{1}{3}} \left( \int_{B_r} e^{12u} \varphi^8 \right)^{\frac{1}{6}}\\
\leq& C \ep_1^{\frac{1}{2}} \left( \int_{B_r} |\nabla df|^6 \varphi^8 \right)^{\frac{1}{3}} \left( \int_{B_r} e^{12u} \varphi^8 \right)^{\frac{1}{6}}.
\end{split}
\end{equation}
By \Cref{e^pu},
\[
\begin{split}
\int_{B_r} e^{12u} \varphi^8 (t) \leq& \int_{B_r} e^{12u} \varphi^8 (t_1) + C \int_{t_1}^{t} \int_{B_r} \left( |\nabla df|^2 + |df|^4 \right)^3 \varphi^8\\
\leq& \int_{B_r} e^{12u} \varphi^8 (t_1) + C \int_{t_1}^{t} \int_{B_r} |\nabla df|^6 \varphi^8 + |df|^{12} \varphi^8.
\end{split}
\]
On the other hand, by Sobolev embedding and above inequalities, we have
\[
\begin{split}
\left( \int_{B_r} |\nabla df|^6 \varphi^8 \right)^{\frac{1}{2}} \leq& C \left( \int_{B_r} |\nabla^2 df|^2 |\nabla df| \varphi^4 + |\nabla df|^3 \varphi^2 |\nabla \varphi|^2 \right)\\
\leq& C \ep_1^{\frac{3}{2}} + C \int_{B_r} |\nabla^2 df|^2 |\nabla df| \varphi^4,
\end{split}
\]
\[
\begin{split}
\left( \int_{B_r} |df|^{12} \varphi^8 \right)^{\frac{1}{2}} \leq& C \left( \int_{B_r} |\nabla df|^2 |df|^4 \varphi^4 + |df|^6 \varphi^2 |\nabla \varphi|^2 \right)\\
\leq&C \left( \int_{B_r} |\nabla df|^6 \varphi^8 \right)^{\frac{1}{3}} \left( \int_{B_r} |df|^6 \varphi^2 \right)^{\frac{2}{3}} + C \int_{B_r} |df|^6 \varphi^2 |\nabla \varphi|^2\\
\leq& C \left( \int_{B_r} |\nabla df|^6 \varphi^8 \right)^{\frac{1}{2}} + C \left( \int_{B_r} |df|^6 \varphi^2 \right)^2 + C \int_{B_r} |df|^6 \varphi^2 |\nabla \varphi|^2\\
\leq& C \ep_1^{\frac{3}{2}} + C \int_{B_r} |\nabla^2 df|^2 |\nabla df| \varphi^4 + C \ep_1^3 +C \frac{1}{r^2} \ep_1^{\frac{3}{2}}
\end{split}
\]
where we use the following interpolation inequality
\[
\int_{B_r} |df|^6 \leq \left( \int_{B_r} |df|^4 \right)^{\frac{1}{2}} \left( \int_{B_r} |df|^8 \right)^{\frac{1}{2}} \leq \ep_1^{\frac{3}{2}}.
\]

Combining above inequalities, we obtain
\[
\begin{split}
\int_{B_r} \left( |\nabla df|^6 + |df|^{12} \right) \varphi^8 \leq& C (1 + \frac{1}{r^4}) \ep_1^3 + C \left( \int_{B_r} |\nabla^2 df|^2 |\nabla df| \varphi^4 \right)^2\\
\leq& C \left(1 + \frac{1}{r^4} \right) \ep_1^2 + C \ep_1 \left( \int_{B_r} |\nabla df|^6 \varphi^8 \right)^{\frac{2}{3}} \left( \int_{B_r} e^{12u} \varphi^8 \right)^{\frac{1}{3}}\\
\leq& C \left(1 + \frac{1}{r^4} \right) \ep_1^2 + \frac{1}{2}  \int_{B_r} |\nabla df|^6 \varphi^8  \\
&+ C \ep_1^3 \left( \int_{B_r} e^{12u}\varphi^8 (t_1) + C \int_{t_1}^{t} \int_{B_r} \left( |\nabla df|^6 + |df|^{12} \right) \varphi^8 \right).
\end{split}
\]
Now we denote
\[
X(t) = \int_{B_r} \left( |\nabla df|^6 + |df|^{12} \right) \varphi^8 (t).
\]
Then the above inequality becomes
\[
X(t) \leq D_1 + D_2 \int_{t_1}^{t} X(s)ds
\]
where
\[
D_1 = C \left( 1 + \frac{1}{r^4} \right) \ep_1^2 + C \ep_1^3 \int_{B_r} e^{12u} (t_1), \quad D_2 = C \ep_1^3.
\]
Then by Gronwall's inequality, we obtain
\[
X(t) \leq D_1 \left(1 + D_2 (t_2-t_1) e^{D_2(t_2-t_1)} \right) = C_{10} \ep_1^2
\]
for some constant $C_{10} = C_{10}(t_2,r,t_2-t_1, V_2(t_1))$.

Finally, for $\int |\nabla^2 df|^2 |\nabla df| \varphi^4$, from \eqref{nabla^3 f nabla^2 f with ep} and \eqref{nabla^3 f nabla^2 f with f_t},
\[
\begin{split}
\int_{B_r} |\nabla^2 df|^2 |\nabla df| \varphi^4 \leq& C \left( 1 + \frac{1}{r^2} \right) \ep_1 + C \ep_1^{\frac{1}{2}} (V_2(t_1) + C (t_2-t_1) C_{10} )^{\frac{1}{6}} C_{10}^{\frac{1}{3}} \ep_1\\
\leq& C_{10} \ep_1.
\end{split}
\]
This completes the proof.
\end{proof}

As a consequence, we obtain
\begin{equation} \label{df_0 est}
\int_{B_r} e^{12u} \varphi^8(t) , \sup_{t \in [t_1,t_2]} \|df\|_{C^0(B_r)} \leq C_{11}
\end{equation}
for some constant $C_{11} = C_{11}(t_2,r,t_2-t_1,V_2(t_1), \ep_1)$.

%%%%%%%%%%%%%%%%%%%%%%%%%%%%%%%%%%%%%%%%%%%%%%%%%%%%%%%%%%%%%%%%%%%%%%%%%%%%%
\section{Higher order estimate}
\label{sec5}

In this section, we obtain H{\"o}lder estimate for the solution under the smallness assumption \eqref{ass}.
%Note that as a result of previous section, we have $\sup |df| \leq C_{10}$.
Throughout this section, $\varphi$ is a cut-off function on $B_r$ with $|\nabla \varphi| \leq \frac{4}{r}$.

We first show the following $L^2 W^{4,2}$-type estimate.
\begin{prop} \label{nabla^4 f est}
There exists $\ep_1>0$ such that the following holds:

Let $(f,u)$ be a smooth solution of \eqref{eq1} on $M \times [t_1,t_2]$ with
\begin{equation*}
\sup_{t \in [t_1,t_2]} \int_{B_r} 1 + |df|^4 + |\nabla df|^2 \leq \ep_1.
\end{equation*}
Then
\begin{equation} \label{loc W42}
\int_{t_1}^{t_2} \int_{B_r} |\Delta^2 f|^2 \varphi^4, \int_{t_1}^{t_2} \left( \int_{B_r} |\nabla \Delta f|^4 \varphi^4 \right)^{\frac{1}{2}} \leq C_{12}
\end{equation}
for some constant
\[
C_{12} := C_{12}(t_2,r,t_2-t_1, V_2(t_1),\ep_1).
\]
\end{prop}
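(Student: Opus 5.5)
We plan to estimate $\Delta^2 f$ directly from the first equation of \eqref{eq1}, which gives
\[
\Delta^2 f = -e^{4u} f_t + \Delta (A(df,df)) - \langle \Delta f, \Delta P \rangle + 2 \nabla \langle \Delta f, \nabla P \rangle .
\]
We square this, multiply by $\varphi^4$, integrate over $B_r$, and then integrate in time over $[t_1,t_2]$. This yields
\[
\int_{t_1}^{t_2}\int_{B_r} |\Delta^2 f|^2 \varphi^4 \leq C \int_{t_1}^{t_2}\int_{B_r} e^{8u}|f_t|^2 \varphi^4 + C\int_{t_1}^{t_2}\int_{B_r} |B|^2\varphi^4 .
\]

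\textbf{Estimating $B$.} Expanding the terms of $B$ by the chain rule, with the bounds on $A, DA, D^2A, P, \nabla P, \nabla^2 P$, gives pointwise
\[
|B| \leq C\bigl( |\nabla^2 df||df| + |\nabla df|^2 + |\nabla df||df|^2 + |df|^4 + |\nabla \Delta f||df| + |\Delta f|^2 + |\Delta f||df|^2 \bigr).
\]
By \eqref{df_0 est} we have $\sup_{B_r\times[t_1,t_2]}|df|\leq C_{11}$. Hence every term except $|\nabla df|^2$ is controlled, uniformly in $t$, by the quantities in \Cref{nabla^3 f est} and \Cref{nabla^3 f nabla^2 f est}. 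These are $\int|\nabla^2 df|^2\varphi^4$, $\int|\nabla\Delta f|^2\varphi^4$, $\int|\nabla df|^4\varphi^8$ and $\int |df|^8\varphi^8$, all bounded by $C_9\ep_1$ or $C_9 \ep_1^2$.

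The quartic term $\int |\nabla df|^4\varphi^4$ carries the weight $\varphi^4$ rather than $\varphi^8$. We handle it by taking a slightly larger cut-off, or equivalently by applying Sobolev to $|\nabla df|^2\varphi^2$ with the $W^{3,2}$ bound. This gives a bound $C(1+r^{-4})$. Integrating in time then produces a factor $(t_2-t_1)$.

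\textbf{The $f_t$ term (main obstacle).} The term $\iint e^{8u}|f_t|^2\varphi^4$ carries an extra $e^{4u}$ compared with the local energy quantity $\int e^{4u}|f_t|^2\le \ep$. I expect this to be the main difficulty. First we use Hölder:
\[
\int_{B_r} e^{8u}|f_t|^2\varphi^4 \le \Bigl(\int_{B_r} e^{12u}\varphi^8\Bigr)^{1/3}\Bigl(\int_{B_r}|f_t|^3\varphi^{2}\Bigr)^{2/3}.
\]
The first factor is bounded by $C_{11}$ via \eqref{df_0 est}. The second factor is handled by \Cref{f_t^4 with ep 2}, which gives $\int_{t_1}^{t_2}(\int|f_t|^3\varphi^4)^2\le C\ep^3$. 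Since $2/3<2$, Hölder in time bounds $\int_{t_1}^{t_2}(\int|f_t|^3)^{2/3}$ by $C(t_2-t_1)^{2/3}$ times a power of $\ep$. A minor adjustment of the cut-off exponents is needed to match $\varphi^4$; we use a nested cut-off supported in the ball where \Cref{f_t^4 with ep 2} applies.

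Alternatively, we could use $|f_t|^4$ from the same lemma together with $\int e^{16u}$, but that would require a higher $e^{pu}$ bound, so the $|f_t|^3$ route is preferable.

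\textbf{Second estimate.} For $\int_{t_1}^{t_2}(\int|\nabla\Delta f|^4\varphi^4)^{1/2}$, we apply the Sobolev embedding $W^{1,2}_0\hookrightarrow L^4$ to $\nabla\Delta f\,\varphi$. This gives
\[
\Bigl(\int|\nabla\Delta f|^4\varphi^4\Bigr)^{1/2}\le C\int |\nabla^2\Delta f|^2\varphi^2 + C\int|\nabla\Delta f|^2|\nabla\varphi|^2 .
\]
We then invoke \Cref{Int by parts general} with $F=\Delta f$ to replace $\nabla^2\Delta f$ by $\Delta^2 f$, up to lower-order terms $\int|\nabla\Delta f|^2(\varphi^2+|\nabla\varphi|^2)$, which are already bounded. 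The cut-off power mismatch ($\varphi^2$ against $\varphi^4$) is resolved by running the first estimate with $\varphi^2$ in place of $\varphi^4$, or by shrinking the ball.

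Integrating in time and using the first bound completes the argument, with $C_{12}$ depending on $t_2, r, t_2-t_1, V_2(t_1), \ep_1$.
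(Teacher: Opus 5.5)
\textbf{Gap: the H\"older step for the $f_t$ term.} Your decomposition $\Delta^2 f=-e^{4u}f_t+B$ and your treatment of $B$ via \eqref{df_0 est} and Section 4 follow the paper's route, as does the Sobolev step for the second estimate. The problem is the step you yourself flag as the main obstacle. The H\"older inequality
\[
\int_{B_r} e^{8u}|f_t|^2\varphi^4 \le \Bigl(\int_{B_r} e^{12u}\varphi^8\Bigr)^{1/3}\Bigl(\int_{B_r}|f_t|^3\varphi^{2}\Bigr)^{2/3}
\]
is not valid. If you replace $u$ by $u+c$, the left side is multiplied by $e^{8c}$ but the right side only by $e^{4c}$, so it fails for large $c$. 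To land on $\int|f_t|^3$ you must put $|f_t|^2$ in $L^{3/2}$. That forces $e^{8u}$ into $L^3$, i.e.\ you would need a bound on $\int e^{24u}$, and nothing like that is available. So the $|f_t|^3$ route is actually worse than the $|f_t|^4$ route you set aside. That route needs $\int e^{16u}$, and it is circular: \eqref{e^16} is derived from \eqref{W28}, which uses the second estimate of this very proposition.

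\textbf{The fix, as in the paper.} With only $\int e^{12u}\varphi^8\le C_{11}$ at hand, H\"older forces you to pair $e^{8u}$ with $|f_t|^6$. Using Young's inequality afterwards,
\[
\int_{B_r} e^{8u}|f_t|^2\varphi^4 \le \Bigl(\int_{B_r} e^{12u}\varphi^4\Bigr)^{2/3}\Bigl(\int_{B_r}|f_t|^6\varphi^4\Bigr)^{1/3}
\le \frac23\Bigl(\int_{B_r}|f_t|^6\varphi^4\Bigr)^{1/2}+\frac13\Bigl(\int_{B_r} e^{12u}\varphi^4\Bigr)^{2}.
\]
Now integrate in time. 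The first term is controlled by the second estimate of \Cref{f_t^4 with ep 2}, namely $\int_{t_1}^{t_2}\bigl(\int|f_t|^6\varphi^4\bigr)^{1/2}\le C\ep^{3/2}$. The second term is bounded by $(t_2-t_1)$ times a constant via \eqref{df_0 est}. This is exactly the paper's argument. The exponent $1/3<1/2$ on the $|f_t|^6$ factor is what makes the mixed-norm bound of that lemma usable.

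With this replacement the rest of your plan goes through. That includes the nested cut-offs, and the use of \Cref{Int by parts general} with $F=\Delta f$ to pass from the $L^4$ norm of $\nabla\Delta f$ back to $\Delta^2 f$.
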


\begin{proof}
From the equation $e^{4u} f_t = -\Delta^2 f + B$ and $f_t \perp B$, we have
\[
\int_{B_r} |\Delta^2 f|^2 \varphi^4 = \int_{B_r} e^{8u} |f_t|^2 \varphi^4 + \int_{B_r} |B|^2 \varphi^4.
\]
And for the second term, we note that
\[
\begin{split}
B =& \Delta (A(df,df)) - \langle \Delta f, \Delta P \rangle + 2 \nabla \langle \Delta f, \nabla P \rangle\\
|B|^2 \leq& C ( |\Delta df|^2 |df|^2 + |\nabla df|^4 + |\nabla df|^2 |df|^4 + |df|^8 \\
& \quad + |\Delta f|^4 + |\Delta f|^2 |df|^4 + |\nabla \Delta f|^2 |df|^2 + |\nabla df|^2 |\Delta f|^2 ).
\end{split}
\]
As a consequence of previous section and \eqref{df_0 est}, we obtain that $\int_{B_r} |B|^2 \varphi^4 \leq C$ for some constant $C$.
Next, for the first term, we obtain
\[
\begin{split}
\int_{B_r} e^{8u} |f_t|^2 \varphi^4 = & \left( \int_{B_r} e^{12u} \varphi^4 \right)^{\frac{2}{3}} \left( \int_{B_r} |f_t|^6 \varphi^4 \right)^{\frac{1}{3}} \leq \frac{2}{3} \left( \int_{B_r} |f_t|^6 \varphi^4 \right)^{\frac{1}{2}} + \frac{1}{3} \left( \int_{B_r} e^{12u} \varphi^4 \right)^2.
\end{split}
\]
By \Cref{f_t^4 with ep 2} and \eqref{df_0 est}, we obtain the first estimate.

The second estimate comes from Sobolev embedding $W^{1,2}_0 \hookrightarrow L^4$.
\end{proof}

Combining above result with \Cref{nabla^3 f est}, we get
\[
\int_{t_1}^{t_2} \int_{B_r} |\nabla \Delta f|^3 \varphi^4 \leq \int_{t_1}^{t_2} \left( \int_{B_r} |\nabla \Delta f|^4 \varphi^4 \right)^{\frac{1}{2}} \left( \int_{B_r} |\nabla \Delta f|^2 \varphi^4 \right)^{\frac{1}{2}} \leq C_{12} C_9^{\frac{1}{2}} \ep_1^{\frac{1}{2}}.
\]
Instead, combining with \Cref{nabla^3 f nabla^2 f est}, we also get
\[
\int_{t_1}^{t_2} \int_{B_r} |\nabla \Delta f|^2 |\nabla df|^3 \varphi^4 \leq \int_{t_1}^{t_2} \left( \int_{B_r} |\nabla \Delta f|^4 \varphi^4 \right)^{\frac{1}{2}} \left( \int_{B_r} |\nabla df|^6 \varphi^4 \right)^{\frac{1}{2}} \leq C_{12} C_{10}^{\frac{1}{2}} \ep_1^{\frac{1}{2}}. 
\]

At this moment, we need more elaborate estimate than \Cref{Int by parts general} to relate $\iint |\nabla \Delta f|^2 |\nabla df|^p \varphi^4$ and $\iint |\nabla^2 df|^2 |\nabla df|^p \varphi^4$ for any $p>0$, under the bounds for $\iint |\nabla df|^{p+2} \varphi^4$.

\begin{lemma} \label{nabla^3 f nabla^2 f p comm est}
There exists $\ep_1>0$ such that the following holds:

Let $(f,u)$ be a smooth solution of \eqref{eq1} on $M \times [t_1,t_2]$ with
\begin{equation*}
\sup_{t \in [t_1,t_2]} \int_{B_r} 1 + |df|^4 + |\nabla df|^2 \leq \ep_1.
\end{equation*}
Also assume that for some $p>0$,
\[
\int_{t_1}^{t_2} \int_{B_r} |\nabla df|^{p+2} \leq C'.
\]
Then
\begin{equation}
\int_{t_1}^{t_2} \int_{B_r} |\nabla^2 df|^2 |\nabla df|^p \varphi^4 \leq (2 + p) \int_{t_1}^{t_2} \int_{B_r} |\nabla \Delta f|^2 |\nabla df|^p \varphi^4 + C_{p}'
\end{equation}
for some constant $C_{p}' := C_{p}'(t_2,r,t_2-t_1,V_2(t_1),\ep_1,p,C')$.
\end{lemma}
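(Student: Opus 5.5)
We redo the integration-by-parts computation in the first half of the proof of \Cref{loc W32 lem 2}, but stop before the equation is used. We then integrate in time and control the lower-order terms with the assumed bound $\iint |\nabla df|^{p+2}\le C'$ together with \eqref{df_0 est}. Since $|df|\le C_{11}$ on $B_r\times[t_1,t_2]$, every factor of $|df|$ is a harmless constant.

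The first step starts from the commutator identity \eqref{comm}. Integrating by parts once gives
\[
\int |\nabla^2 df|^2|\nabla df|^p\varphi^4 \le V+VI+VII+\delta\int|\nabla^2 df|^2|\nabla df|^p\varphi^4 + C\int (1+|\nabla df|^{p+2})\varphi^4 + C\int |\nabla df|^{p+2}\varphi^2|\nabla\varphi|^2,
\]
with $V,VI,VII$ as in \eqref{nabla^3 f nabla^2 f 1}. We use $|df|\le C_{11}$ to replace $|df|^2|\nabla df|^{p+2}$ and $|df|^6|\nabla df|^p$ by $C(1+|\nabla df|^{p+2})$. This replacement avoids the $|\nabla df|^{p+3}$ term that appeared there. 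Only the genuinely cubic terms in $\nabla df$ need care, and with $|df|$ bounded the commutator terms in \eqref{comm} are linear in $\nabla df$.

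The second step rewrites $V+VI$ in terms of $\nabla\Delta f$, using $\Delta\nabla_k f=\nabla_k\Delta f+Ric_{ki}\nabla_i f$. This gives
\[
V+VI\le \int|\nabla\Delta f|^2|\nabla df|^p\varphi^4 + p\int \langle\nabla_k\Delta f,\nabla_j\nabla_k f\rangle\langle \nabla_j\nabla df,\nabla df\rangle|\nabla df|^{p-2}\varphi^4 + \text{l.o.t.}
\]
Here we do \emph{not} integrate $\int|\nabla\Delta f|^2|\nabla df|^p$ by parts further; we keep it, because it is the quantity on the right of the lemma. The mixed term is bounded by Cauchy--Schwarz:
\[
p\,|\nabla\Delta f|\,|\nabla df|^{p-1}|w| \le \tfrac p2 |\nabla\Delta f|^2|\nabla df|^p + \tfrac p2 |w|^2|\nabla df|^{p-2},
\]
where $w_j=\langle\nabla_j\nabla df,\nabla df\rangle$. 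The good term $VII=-p\int|w|^2|\nabla df|^{p-2}\varphi^4$ then absorbs the $\tfrac p2|w|^2$ contribution completely. The coefficient in front of $\int|\nabla\Delta f|^2|\nabla df|^p$ becomes $1+\tfrac p2\le 2+p$, which leaves slack for the $\delta$-terms. The remaining lower-order terms are $\int|df||\nabla^2df||\nabla df|^p\varphi^4$ and $\int|\nabla^2df||\nabla df|^{p+1}\varphi^3|\nabla\varphi|$. Young's inequality splits each into $\delta\int|\nabla^2df|^2|\nabla df|^p\varphi^4$ plus $C\int(1+|\nabla df|^{p+2})(\varphi^4+|\nabla\varphi|^2)$.

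The third step absorbs all $\delta$-terms into the left-hand side, integrates over $[t_1,t_2]$, and bounds the leftovers by $C(1+r^{-2})\big((t_2-t_1)|B_r|+C'\big)$. This constant is $C_p'$.

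The main obstacle is the bookkeeping of the sharp constant in front of $\int|\nabla\Delta f|^2|\nabla df|^p$ while the $|w|^2$ term is absorbed exactly by $VII$. If the Cauchy--Schwarz split leaves a remainder that $VII$ cannot absorb, the fallback is \eqref{eq alg}, as in the proof of \Cref{loc W32 lem 2}. That route bounds the mixed term by $\tfrac{3p}{4}$ times $\int|\nabla\Delta f|^2|\nabla df|^p$, which still gives a coefficient below $2+p$. The ambient-curvature terms in \eqref{comm} are routine once $|df|\le C_{11}$.
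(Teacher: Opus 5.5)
Your proposal is correct and follows essentially the paper's route. You integrate by parts twice to produce $V+VI+VII$, bound the mixed term by Cauchy--Schwarz and absorb its $|w|^2$ part into $VII$, and use the Ricci commutator $\Delta\nabla_k f=\nabla_k\Delta f+Ric_{ki}\nabla_i f$ to pass between $\Delta df$ and $\nabla\Delta f$. The remainders are controlled by $|df|\le C_{11}$ and $\iint|\nabla df|^{p+2}\le C'$.

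The only difference is the order of two steps. You replace $\Delta df$ by $\nabla\Delta f$ before the Cauchy--Schwarz step, whereas the paper does it afterwards. This gives you the coefficient $1+\frac p2$ instead of the paper's $2+\frac p2$, and the \eqref{eq alg} fallback is not needed.
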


\begin{proof}
By integration by parts and \eqref{comm},
\[
\begin{split}
\int_{B_r} |\nabla^2 df|^2 |\nabla df|^p \varphi^4 =& -\int_{B_r} \langle \Delta \nabla df, \nabla df \rangle |\nabla df|^p \varphi^4 - p \int_{B_r} (\langle \nabla^2 df, \nabla df \rangle)^2 |\nabla df|^{p-2} \varphi^4\\
& - 4 \int_{B_r} \langle \nabla^2 df, \nabla df \rangle |\nabla df|^p \varphi^3 \nabla \varphi\\
\leq& -\int_{B_r} \langle \nabla \Delta df, \nabla df \rangle |\nabla df|^p \varphi^4 - p \int_{B_r} (\langle \nabla^2 df, \nabla df \rangle)^2 |\nabla df|^{p-2} \varphi^4 \\
&+ C \int_{B_r} (|\nabla df| + C ) |\nabla df|^p \varphi^4 + C \int_{B_r} |\nabla^2 df| |\nabla df|^{p+1} \varphi^3 |\nabla \varphi|\\
=& \int_{B_r} |\Delta df|^2 |\nabla df|^p \varphi^4 +p \int_{B_r} \langle \Delta df, \nabla df \rangle \langle \nabla^2 df, \nabla df \rangle |\nabla df|^{p-2} \varphi^4\\
&- p \int_{B_r} (\langle \nabla^2 df, \nabla df \rangle)^2 |\nabla df|^{p-2} \varphi^4 \\
&+ C \int_{B_r} (|\nabla df| + C ) |\nabla df|^p \varphi^4 + C \int_{B_r} |\nabla^2 df| |\nabla df|^{p+1} \varphi^3 |\nabla \varphi|\\
\leq& \left( 1 + \frac{p}{4} \right) \int_{B_r} |\Delta df|^2 |\nabla df|^p \varphi^4 + \delta \int_{B_r} |\nabla^2 df|^2 |\nabla df|^p \varphi^4\\
&+ C \int_{B_r} |\nabla df|^{p+2} \varphi^2 |\nabla \varphi|^2.
\end{split}
\]
Using \eqref{comm} again and integrating over $[t_1,t_2]$, we obtain the desired inequality.
\end{proof}

By \Cref{nabla^3 f nabla^2 f p comm est} with $p=3$, we have
\[
\int_{t_1}^{t_2} \int_{B_r} |\nabla^2 df|^2 |\nabla df|^3 \varphi^4 \leq 5C_{12} C_{10}^{\frac{1}{2}} \ep_1^{\frac{1}{2}} + C_{3}'.
\]

By Sobolev embedding and H{\"o}lder inequality, we also have
\begin{align}
\left( \int_{B_r} |\nabla df|^{10} \varphi^8 \right)^{\frac{1}{2}} \leq& C \int_{B_r} |\nabla^2 df|^2 |\nabla df|^3 \varphi^4 + |\nabla df|^5 \varphi^2 |\nabla \varphi|^2 \nonumber\\
%\int_{t_1}^{t_2} \left( \int_{B_r} |\Delta f|^{10} \varphi^8 \right)^{\frac{1}{2}} \leq& C_{13}\\
\int_{t_1}^{t_2} \int_{B_r} |\nabla df|^8 \varphi^4 \leq& \int_{t_1}^{t_2} \left( \int_{B_r} |\nabla df|^{10} \varphi^8 \right)^{\frac{1}{2}} \left( \int_{B_r} |\nabla df|^6  \right)^{\frac{1}{2}} \leq C_{13} \label{W28}
\end{align}
which also implies
\begin{equation} \label{e^16}
\int_{B_r} e^{16 u}\varphi^8 (t) \leq \int_{B_r} e^{16u} \varphi^8 (t_1) + C \int_{t_1}^{t} \int_{B_r} |\nabla df|^8 \varphi^8 + |df|^{16} \varphi^8 \leq C_{13}
\end{equation}
for some constant $C_{13} := C_{13} (t_2,r,t_2-t_1, V_3(t_1),\ep_1)$ where
\[
V_3(t_1) := \int_{B_r} e^{16 u} (t_1).
\]

With the help of improved control for $\int e^{16 u}$, we get the following 
%$L^{\frac{8}{3}} W^{4,\frac{8}{3}}$ estimate.
higher order estimate.
\begin{prop} \label{nabla^4 f est 2}
There exists $\ep_1>0$ such that the following holds:

Let $(f,u)$ be a smooth solution of \eqref{eq1} on $M \times [t_1,t_2]$ with
\begin{equation*}
\sup_{t \in [t_1,t_2]} \int_{B_r} 1 + |df|^4 + |\nabla df|^2 \leq \ep_1.
\end{equation*}
Then
\begin{equation} \label{loc W4 8/3}
\int_{t_1}^{t_2} \int_{B_r} |\Delta^2 f|^{\frac{8}{3}} \varphi^4
%, \int_{t_1}^{t_2} \left( \int_{B_r} |\nabla \Delta f|^{8} \varphi^4 \right)^{\frac{1}{3}} 
\leq C_{14}
\end{equation}
for some constant
\[
C_{14} := C_{14}(t_2,r,t_2-t_1, V_3(t_1),\ep_1).
\]
\end{prop}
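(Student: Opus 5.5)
The plan is to mimic the proof of \Cref{nabla^4 f est}, now with exponent $\frac{8}{3}$ in place of $2$. From the equation $e^{4u} f_t = -\Delta^2 f + B$, we have pointwise
\[
|\Delta^2 f|^{\frac{8}{3}} \leq C e^{\frac{32}{3}u} |f_t|^{\frac{8}{3}} + C |B|^{\frac{8}{3}}.
\]
Actually, since $B \perp f_t$, one even gets $|\Delta^2 f|^2 = e^{8u}|f_t|^2 + |B|^2$. So it suffices to bound separately the space-time integrals of $e^{\frac{32}{3}u}|f_t|^{\frac{8}{3}}\varphi^4$ and of $|B|^{\frac{8}{3}}\varphi^4$ over $B_r\times[t_1,t_2]$.

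For the $f_t$ term, I would use H\"older in space with exponents $\frac{3}{2}$ and $3$:
\[
\int_{B_r} e^{\frac{32}{3}u}|f_t|^{\frac{8}{3}}\varphi^4 \leq \left(\int_{B_r} e^{16u}\varphi^4\right)^{\frac{2}{3}}\left(\int_{B_r}|f_t|^8\varphi^4\right)^{\frac{1}{3}}.
\]
The first factor is bounded uniformly in $t$ by \eqref{e^16}; this is exactly why the improved control of $e^{16u}$ was needed. The second factor is integrable in time by \Cref{f_t^4 with ep 3}, which bounds $\int_{t_1}^{t_2}(\int_{B_r}|f_t|^8\varphi^4)^{\frac{1}{3}}$ by $C_7(1+r^{-6})\ep$. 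Since \Cref{loc E cor} is used for smallness, this requires $r\le R(\ep)$, which is consistent with the smallness setup of this section. So this term contributes at most $C_{13}^{\frac{2}{3}}C_7(1+r^{-6})\ep$.

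For the $B$ term, the expansion of $|B|^2$ in the proof of \Cref{nabla^4 f est}, together with $\sup|df|\le C_{11}$ from \eqref{df_0 est}, gives
\[
|B| \le C\left(|\nabla\Delta f| + |\nabla^2 df| + |\nabla df|^2 + |\nabla df| + 1\right),
\]
where $|\Delta f|\le C|\nabla df|$ is used. Hence
\[
|B|^{\frac{8}{3}} \le C\left(|\nabla\Delta f|^{\frac{8}{3}} + |\nabla^2 df|^{\frac{8}{3}} + |\nabla df|^{\frac{16}{3}} + 1\right).
\]
The term $\iint|\nabla df|^{\frac{16}{3}}\varphi^4$ is controlled by interpolating between the uniform-in-time bound $\int|\nabla df|^6\varphi^8\le C_{10}\ep_1^2$ and \eqref{W28}; indeed, since $\frac{16}{3}<6$, the uniform $L^6$ bound already suffices. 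For the third-order terms, I would interpolate between $L^2$ and $L^4$ using $\frac{8}{3}=\frac{2}{3}\cdot 2+\frac{1}{3}\cdot 4$:
\[
\int_{B_r}|\nabla\Delta f|^{\frac{8}{3}}\varphi^4 \le \left(\int_{B_r}|\nabla\Delta f|^2\varphi^4\right)^{\frac{2}{3}}\left(\int_{B_r}|\nabla\Delta f|^4\varphi^4\right)^{\frac{1}{3}}.
\]
The first factor is bounded uniformly in $t$ by \Cref{nabla^3 f est}. The second factor is $\le 1+(\int|\nabla\Delta f|^4\varphi^4)^{\frac{1}{2}}$, which is time-integrable by \eqref{loc W42}. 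The same argument handles $|\nabla^2 df|^{\frac{8}{3}}$, after first obtaining an $L^4$-in-space Sobolev bound for $\nabla^2 df$ from the one for $\nabla\Delta f$ using \eqref{comm} together with the elliptic estimate of \Cref{Int by parts general}. Alternatively, one can use $|\nabla^2 df|^{\frac{8}{3}}\le |\nabla^2 df|^2|\nabla df|^{\frac{2}{3}}\cdot(\ldots)$-type splittings combined with \Cref{nabla^3 f nabla^2 f p comm est}.

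The main obstacle is the fourth-order quantity hidden in $B$, namely $\nabla\Delta f$ appearing with weight $|df|$, and any cutoff-power mismatch. The weights $\varphi^4$ and $\varphi^8$ in the Sobolev steps must be matched carefully, and the factor $\int|\nabla\Delta f|^4\varphi^4$ from \eqref{loc W42} only carries $\varphi^4$ after Sobolev embedding on a slightly larger ball. If necessary, I would shrink the ball, applying the earlier estimates on $B_r$ and concluding on $B_{r/2}$ with a new cutoff, which only changes constants and keeps the dependence of $C_{14}$ on $(t_2,r,t_2-t_1,V_3(t_1),\ep_1)$.
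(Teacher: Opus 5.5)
Your proposal is correct and follows the paper's proof essentially step for step. It uses the same splitting $|\Delta^2 f|^2 = e^{8u}|f_t|^2+|B|^2$, the same H\"older step with exponents $\frac{3}{2}$ and $3$ (combining the uniform bound on $\int_{B_r}e^{16u}\varphi^8$ with the time-integrated $\bigl(\int_{B_r}|f_t|^8\varphi^4\bigr)^{\frac{1}{3}}$ bound), and controls $\iint|B|^{\frac{8}{3}}\varphi^4$ from earlier bounds; your $L^2$--$L^4$ interpolation for $\nabla\Delta f$ spells out what the paper leaves implicit. You can drop the separate $|\nabla^2 df|^{\frac{8}{3}}$ term, and the loosely justified $L^4$ bound for $\nabla^2 df$ it requires: the only third-order quantity in $B$ is $\Delta df$ (paired with $df$), and \eqref{comm} converts it to $\nabla\Delta f$ up to terms bounded by $C(|df|^3+|df|)$, which is how the paper proceeds.
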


\begin{proof}
As above, from $|\Delta^2 f|^2 = e^{8u} |f_t|^2 + |B|^2$, we have
\[
\int_{B_r} |\Delta^2 f|^{\frac{8}{3}} \varphi^4 \leq C \int_{B_r} e^{\frac{32}{3}u} |f_t|^{\frac{8}{3}} \varphi^4 + C \int_{B_r} |B|^{\frac{8}{3}} \varphi^4.
\]
For the second term, using \eqref{comm}, we have
\[
|B|^{\frac{8}{3}} \leq C \left( |\nabla \Delta f|^{\frac{8}{3}} |df|^{\frac{8}{3}} + |\nabla df|^{\frac{16}{3}} + |\nabla df|^{\frac{8}{3}} |df|^{\frac{16}{3}} + |df|^{\frac{32}{3}} \right).
\]
Hence, as a consequence of previous section and above computations, $\int_{t_1}^{t_2} \int_{B_r} |B|^{\frac{8}{3}} \varphi^4 \leq C$.
For the first term, we have
\[
\int_{B_r} e^{\frac{32}{3}u} |f_t|^{\frac{8}{3}} \varphi^4 \leq \left( \int_{B_r} e^{16u} \varphi^4 \right)^{\frac{2}{3}} \left( \int_{B_r} |f_t|^{8} \varphi^4 \right)^{\frac{1}{3}}.
\]
By \eqref{e^16} and \Cref{f_t^4 with ep 3}, we get the desired inequality.
%
%The second estimate comes from Sobolev embedding $W^{1,\frac{8}{3}}_0 \hookrightarrow L^8$.
\end{proof}

%Now, from the elliptic regularity $\|f\|_{W^{4,p}} \leq C \left( \|\Delta^2 f\|_{L^p} + \|f\|_{L^p} \right)$, for $p=\frac{8}{3}$, we get
By the standard elliptic regularity, we get
\[
\int_{t_1}^{t_2} \int_{B_r} |\nabla^3 df|^{\frac{8}{3}} \varphi^4 \leq C_{15}
\]
for some constant $C_{15} := C_{15}(t_2,r,t_2-t_1,V_3(t_1),\ep_1)$.
This implies that, by Sobolev embedding,
\begin{equation} \label{W38}
\int_{t_1}^{t_2} \left( \int_{B_r} |\nabla^2 df|^{8} \varphi^4 \right)^{\frac{1}{3}}, \int_{t_1}^{t_2} (\sup_{B_r} |\nabla df|)^{\frac{8}{3}} dt \leq C_{15}.
\end{equation}
And it is now enough to get the following uniform estimate for $u$.

\begin{cor} \label{sup u}
There exists $\ep_1>0$ such that the following holds:

Let $(f,u)$ be a smooth solution of \eqref{eq1} on $M \times [t_1,t_2]$ with
\begin{equation*}
\sup_{t \in [t_1,t_2]} \int_{B_r} 1 + |df|^4 + |\nabla df|^2 \leq \ep_1.
\end{equation*}
Then
\begin{equation}
\sup_{B_r \times [t_1,t_2]}u(x,t) \leq C_{u}
\end{equation}
for some constant $C_{u} := C_{u}(t_2,r,t_2-t_1,V_u(t_1),\ep_1)$
where
\[
V_u(t_1) := \sup_{B_r} u (x,t_1).
\]
\end{cor}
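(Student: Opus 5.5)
The approach is to integrate the second equation of \eqref{eq1} pointwise in time. For each fixed $x \in B_r$ we have $u_t = b e^{-4u}(|\nabla df|^2 + |df|^4) - a$. The factor $e^{-4u}$ is bounded by $e^{4at}\le e^{4at_2}$, by the remark after \eqref{e 4u}. Hence
\[
u(x,t) \leq u(x,t_1) + b e^{4at_2} \int_{t_1}^{t} \left( |\nabla df|^2 + |df|^4 \right)(x,s)\, ds .
\]
So it suffices to bound $\int_{t_1}^{t_2} \sup_{B_r} (|\nabla df|^2 + |df|^4)\, dt$ uniformly in $x$. Alternatively, one can use the explicit formula \eqref{e 4u}, restarted at $t_1$. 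That gives $e^{4u(x,t)} \le e^{4u(x,t_1)} + 4b\int_{t_1}^t e^{4a(s-t)}(|\nabla df|^2+|df|^4)(x,s)\,ds$, and then the same time-integrated sup bound is needed.

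The $|df|^4$ term is immediate. By \eqref{df_0 est} we have $\sup_{B_r\times[t_1,t_2]}|df| \le C_{11}$, so this term contributes at most $C_{11}^4 (t_2-t_1)$.

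For the $|\nabla df|^2$ term I will use \eqref{W38}, which gives $\int_{t_1}^{t_2} (\sup_{B_r}|\nabla df|)^{8/3}\,dt \le C_{15}$. Since $2 < 8/3$, Hölder in time gives
\[
\int_{t_1}^{t_2} \sup_{B_r} |\nabla df|^2 \, dt \leq (t_2-t_1)^{\frac14} \left( \int_{t_1}^{t_2} (\sup_{B_r}|\nabla df|)^{\frac83} dt \right)^{\frac34} \leq (t_2-t_1)^{\frac14} C_{15}^{\frac34}.
\]
One caveat: the sup in \eqref{W38} is really over the region where $\varphi$ is bounded below, so strictly it is a sup over a smaller ball such as $B_{r/2}$. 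I will either shrink the ball in the statement or apply the estimates on a slightly larger ball; this is a cosmetic change of radius.

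Combining the two bounds gives $\sup_{B_r\times[t_1,t_2]} u \le V_u(t_1) + b e^{4at_2}\left(C_{11}^4(t_2-t_1) + (t_2-t_1)^{1/4}C_{15}^{3/4}\right) =: C_u$. The only point needing care is the dependence of the constants. $C_{11}$ and $C_{15}$ depend on $V_2(t_1)=\int_{B_r}e^{12u}(t_1)$ and $V_3(t_1)=\int_{B_r}e^{16u}(t_1)$, and both are controlled by $|B_r|\,e^{16 V_u(t_1)}$ (plus $1$). So all constants reduce to $(t_2,r,t_2-t_1,V_u(t_1),\ep_1)$, as the statement claims.

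The main obstacle is already handled by the preceding section: obtaining a time-integrable pointwise bound on $|\nabla df|$ with exponent strictly greater than $2$. This comes from the $L^{8/3}W^{4,8/3}$ estimate of \Cref{nabla^4 f est 2} combined with the Sobolev embedding $W^{2,8/3}\hookrightarrow C^0$ in dimension $4$. Given that, the corollary follows by the direct integration above.
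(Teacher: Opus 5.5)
Your proposal is correct and is essentially the paper's argument: the paper uses your second route, the explicit formula for $e^{4u}$ restarted at $t_1$, and then bounds the time integral of $\sup_{B_r}(|\nabla df|^2+|df|^4)$ using \eqref{W38} and \eqref{df_0 est}. Your version spells out steps the paper leaves implicit: the H{\"o}lder-in-time step from the $L^{8/3}$ bound, the $|df|^4$ contribution, and reducing the dependence on $V_2(t_1),V_3(t_1)$ to $V_u(t_1)$.
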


\begin{proof}
First note that we have a variant of \eqref{e 4u} for $t \in [t_1,t_2]$ by
\[
e^{4u}(t) = e^{-4a(t-t_1)} \left( e^{4u}(t_1) + 4b \int_{t_1}^{t} e^{4a(s-t_1)} (|\nabla df|^2(s) + |df|^4(s)) ds \right).
\]
Denote $V_u(t_1)$ by
\[
V_u(t_1) := \sup_{B_r} u (x,t_1).
\]
Then we have
\[
\begin{split}
e^{4u}(t) &\leq e^{4V_u(t_1)} + 4b e^{4a (t_2-t_1)} \int_{0}^{t} (\sup_{B_r} |\nabla df| )^2 (s) ds + C
\end{split}
\]
which is bounded.
\end{proof}

The above result is crucial in the sense that the operator 
%$e^{4u} \partial_t + \Delta^2$, or equivalently 
$\partial_t + e^{-4u} \Delta^2$ is now uniformly parabolic.

%Moreover, we obtain the following $W^{4,2}$ estimate in free as well.
%
%\begin{prop} \label{W42 prop}
%There exists $\ep_1>0$ such that the following holds:
%
%Let $(f,u)$ be a smooth solution of \eqref{eq1} on $M \times [t_1,t_2]$ with
%\begin{equation*}
%\sup_{t \in [t_1,t_2]} \int_{B_r} 1 + |df|^4 + |\nabla df|^2 \leq \ep_1.
%\end{equation*}
%Then
%\begin{equation}
%\int_{B_r} |\nabla^3 df|^2 \varphi^4 \leq C_{16}
%\end{equation}
%for some constant $C_{16} := C_{16}(t_2,r,t_2-t_1,V_u(t_1),\ep_1)$.
%\end{prop}
%
%\begin{proof}
%As in the proof of \Cref{nabla^4 f est}, we know that $\int_{B_r} |B|^2 \varphi^4$ is bounded.
%Also,
%\[
%\int_{B_r} e^{8u} |f_t|^2 \varphi^4 \leq e^{4 C_{u}} \int_{B_r} e^{4u} |f_t|^2 \varphi^4 \leq C.
%\]
%Hence we have
%\[
%\int_{B_r} |\Delta^2 f|^2 \varphi^4 \leq C.
%\]
%Finally, by standard elliptic regularity, we complete the proof.
%\end{proof}

Now we are ready to prove smoothness.

\begin{theorem} \label{loc smooth}
There exists $\ep_1>0$ such that the following holds:

Let $(f,u)$ be a smooth solution of \eqref{eq1} on $M \times [t_1,t_2]$ with
\begin{equation*}
\sup_{t \in [t_1,t_2]} \int_{B_r} 1 + |df|^4 + |\nabla df|^2 \leq \ep_1.
\end{equation*}
Then H{\"o}lder norms of $f,u$ and their derivatives are all bounded by constants only depending on $t_2,r,t_2-t_1,\ep_1$ and corresponding norms of $u$ at $t_1$.
\end{theorem}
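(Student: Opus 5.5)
The plan is a bootstrap. First, \Cref{sup u} makes the coefficient bounded above; the reverse bound $e^{-4u}\le e^{4at}$ bounds it below. Once the coefficient is squeezed from both sides, the $f$-equation is a uniformly parabolic fourth-order system, and linear parabolic $L^p$ and Schauder theory can be iterated against the $u$-equation.

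Concretely, on $B_r\times[t_1,t_2]$ we have
\[
e^{-4at_2}\le e^{4u}\le e^{4C_u},
\]
so the $f$-equation $f_t+e^{-4u}\Delta^2 f=e^{-4u}B$ has a bounded, uniformly positive leading coefficient. The first task is to show that $u$ has some H\"older modulus in $x$, so that the coefficient is not merely $L^\infty$. I would get this by integrating the $u$-equation in time and differentiating in space:
\[
\nabla u(t)=e^{-4a(t-t_1)}\,e^{-4u(t)}\Big(e^{4u(t_1)}\nabla u(t_1)+b\int_{t_1}^t e^{4a(s-t_1)}\nabla\big(|\nabla df|^2+|df|^4\big)\,ds\Big).
\]
The integrand is controlled by $|\nabla^2 df|\,|\nabla df|+|df|^3|\nabla df|$. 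By \eqref{W38} and \eqref{df_0 est}, $\sup|\nabla df|\in L^{8/3}_t$ and $\nabla^2 df\in L^{8/3}_tL^8_x$. By H\"older in time this gives $\nabla u\in L^\infty_tL^8_x$, and hence $u\in C^{\alpha}_x$ with $\alpha=1/2$ in dimension four, uniformly in $t$. In time, $u_t$ is $L^{4/3}_t$-integrable in sup norm, again by \eqref{W38}, so $u$ is also H\"older in $t$. This step uses the norms of $u$ at $t_1$, which is why the statement allows that dependence.

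With a H\"older leading coefficient, the next step is parabolic $L^p$ theory for $\partial_t+e^{-4u}\Delta^2$ (freezing coefficients locally). The right-hand side $e^{-4u}B$ has terms at most of third order, multiplied by bounded $df$. By \eqref{W38}, \eqref{df_0 est} and \Cref{nabla^4 f est 2} it lies in some $L^q_{t,x}$, and interpolation handles the $\nabla\Delta f\,df$ term. This gives $f\in W^{4,1}_q$ on a smaller cylinder. Iterating, with higher $q$ each round, yields $\nabla^3 f$ H\"older. Then the $u$-equation gives $u_t$ and $\nabla u$ H\"older, through $\nabla(|\nabla df|^2+|df|^4)$, which involves $\nabla^3 f$. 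Parabolic Schauder estimates then give $f\in C^{4+\alpha,1+\alpha/4}$.

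Next I would differentiate the system in $x$. $\nabla^k f$ satisfies the same operator with a right-hand side involving $\nabla^k u$ times $\Delta^2 f$ plus lower-order terms. $\nabla^k u$ is obtained from the integrated $u$-equation using $\nabla^{k+2}f$, so each Schauder step increases the regularity of $f$ by one derivative and that of $u$ by one. Induction on $k$, shrinking the ball by a factor each time, gives bounds on all H\"older norms in terms of $t_2,r,t_2-t_1,\ep_1$ and the corresponding norms of $u$ at $t_1$. Time derivatives then follow from the equations.

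The main obstacle is the first step. The coefficient $e^{-4u}$ must be H\"older, not just bounded, before Schauder or even $L^p$ theory for fourth-order systems applies. The only bound available is the time-integrated one in \eqref{W38}, so the H\"older exponents need care. If $L^\infty_tL^8_x$ for $\nabla u$ fails to give spatial H\"older continuity (it is exactly borderline via Morrey in dimension four, since $8>4$), I would fall back to the VMO-coefficient $L^p$ theory. Its first iteration needs only continuity of $u$, which follows from the above.
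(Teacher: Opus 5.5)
Your proposal is correct and follows the same overall bootstrap as the paper. The shared steps are: uniform parabolicity from \Cref{sup u} together with $e^{-4u}\le e^{4at}$; $L^p$ theory for $\partial_t+e^{-4u}\Delta^2$, iterated through Sobolev embedding; spatial derivatives of $u$ read off from the integrated $u$-equation; differentiation of the $f$-equation in $x$; and Schauder estimates at the end.

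The genuine difference is the first step. The paper invokes ``standard parabolic theory'' to get $\nabla^3 df,\partial_t f\in L^4$ while it has only $L^\infty$ bounds on the coefficient $e^{-4u}$, and it obtains $\nabla u\in L^p$ only afterwards. $L^p$ estimates for $p\neq 2$ with a coefficient that is merely bounded in $x$ are not standard, so that step implicitly needs a quantitative modulus of continuity for $u$.

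Your preliminary step supplies exactly this. From \eqref{W38}, \eqref{df_0 est} and the integrated $u$-equation you get $\nabla u\in L^\infty_tL^8_x$ and $u_t\in L^{4/3}_tL^\infty_x$. Hence $u$ is $C^{1/2}$ in $x$ and $C^{1/4}$ in $t$, which is more than the $C^{1/2,1/8}$ parabolic H\"older regularity the fourth-order theory needs. The constants depend only on $\|\nabla u(t_1)\|_{L^8}$ and the other admissible quantities. Your route costs one extra estimate but makes the first application of parabolic $L^p$ theory legitimate; the paper's route is shorter but leaves this point unaddressed.

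One correction: Morrey's embedding $W^{1,8}\hookrightarrow C^{0,1/2}$ in dimension four is not borderline (the borderline case is $p=4$). So the VMO-coefficient fallback you mention is unnecessary.
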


\begin{proof}
Note that from \Cref{nabla^3 f est} and \eqref{W38}, we have
\[
\int_{t_1}^{t_2} \int_{B_r} |\nabla^2 df|^4 \varphi^4 \leq \int_{t_1}^{t_2} \left( \int_{B_r} |\nabla^2 df|^8 \varphi^4 \right)^{\frac{1}{3}} \left( \int_{B_r} |\nabla^2 df|^2 \varphi^4 \right)^{\frac{2}{3}} \leq C_9^{\frac{2}{3}} \ep_1^{\frac{2}{3}} C_{15}
\]
hence $|\nabla^2 df| \in L^4(B_r \times [t_1,t_2])$.

Now we focus on uniformly parabolic operator $\partial_t + e^{-4u} \Delta^2$.
We have
\[
|(\partial_t + e^{-4u} \Delta^2) f| \leq e^{-4u} |B| \in L^4(B_r \times [t_1,t_2])
\]
which implies $\nabla^3 df, \partial_t f \in L^4(B_r \times [t_1,t_2])$ by standard parabolic theory.
Then by Sobolev embedding, we have
\[
\int_{t_1}^{t_2} \left( \int_{B_r} |\nabla^2 df|^{p} \varphi^4 \right)^{\frac{4}{p}} \leq C \int_{t_1}^{t_2} \int_{B_r} |\nabla^3 df|^4 \varphi^4 \leq C
\]
for any $p>1$.
As above, we get
\[
\int_{t_1}^{t_2} \int_{B_r} |\nabla^2 df|^{6-\frac{8}{p}} \varphi^4 \leq \int_{t_1}^{t_2} \left( \int_{B_r} |\nabla^2 df|^p \varphi^4 \right)^{\frac{4}{p}} \left( \int_{B_r} |\nabla^2 df|^2 \varphi^4 \right)^{\frac{p-4}{p}} \leq C.
\]
This implies that $\nabla^3 df, \partial_t f \in L^{6-\frac{8}{p}}(B_r \times [t_1,t_2])$ for any $p>1$, in particular, for $p=8$, we have $\nabla^3 df, \partial_t f \in L^{5}(B_r \times [t_1,t_2])$.
Repeating this process will lead to $\nabla^3 df, \partial_t f \in L^p(B_r \times [t_1,t_2])$ for any $p>1$.

Next, from \eqref{e^pu}, we obtain
\[
\begin{split}
|\nabla u| (t) \leq& C \left( e^{4u}(t_1) |\nabla u|(t_1) + \int_{t_1}^{t} (|\nabla^2 df| |\nabla df| + |\nabla df| |df|^3)(s) ds \right)\\
\int_{B_r} |\nabla u|^p(t) \leq& C \int_{B_r} |\nabla u|^p(t_1) + C (t-t_1)^{p-1} \int_{t_1}^{t} \int_{B_r} (|\nabla^2 df|^p |\nabla df|^p + |\nabla df|^p |df|^{3p})
\end{split}
\]
hence $\nabla u \in L^p(B_r \times [t_1,t_2])$ for any $p>1$, where the bound depends on $\|\nabla u\|_{L^p(B_r)}(t_1)$ and $p$.
Now taking $\nabla$ in the first equation in \eqref{eq1}, we get
\[
|(\partial_t + e^{-4u} \Delta^2) \nabla f| \leq C \left( |\nabla u| |B| + |\nabla B| \right) \in L^p(B_r \times [t_1,t_2])
\]
for any $p>1$, which implies $\nabla^4 df, \partial_t (df) \in L^p(B_r \times [t_1,t_2])$ for any $p>1$.
By repeating this process, we obtain $\nabla^5 df, \nabla^6 df, \partial_t (\nabla df), \partial_t (\nabla^2 df) \in L^p(B_r \times [t_1,t_2])$.

Finally, from Sobolev embedding, we have $f, df, \nabla df, \nabla^2 df \in C^{\alpha}(B_r \times [t_1,t_2])$ for some $\alpha>0$.
This implies $(\partial_t + e^{-4u} \Delta^2) f \in C^{\alpha,\alpha/4}(B_r \times [t_1,t_2])$ where $C^{\alpha,\alpha/4}$ is parabolic H{\"o}lder space of exponent $\alpha$.
By Schauder estimate and standard bootstrapping argument, we complete the proof.
\end{proof}

\section{Short time existence}
\label{sec6}

In this section, we derive short time existence result.
Most of the method and techniques are similar to \cite{P22} or \cite{P25}, and notations are from \cite{MM12}.
Note that, similar to CHF or $n$-CHF, uncoupling of the system \eqref{eq1} using \eqref{e 4u} yields an evolution equation
\[
f_t = \frac{e^{4at}\left( -\Delta^2 f + \Delta (A(df,df)) - \langle \Delta f, \Delta P \rangle + 2 \nabla \langle \Delta f, \nabla P \rangle \right)}{1 + 4b \int_{0}^{t} e^{4as} (|\nabla df|^2 + |df|^4) (s) ds}
\]
which is non-local in time, which is very difficult to analyze.
Instead, we use fixed-point methods in Mantegaza-Martinazzi \cite{MM12} and Park \cite{P22}, \cite{P25}.

%In most of the arguments in this section, we consider $N \hookrightarrow \mathbb{R}^L$ isometrically.
Define spaces $P^m(M,T)$ be the completion of $C^\infty(M \times [0,T],\mathbb{R}^L)$ under the norm
\[
\|f\|_{P^m(M,T)}^2 := \sum_{4j + k \leq 4m} \int_{M \times [0,T]} |\partial_t^j \nabla^k f|^2 dx dt.
\]
Also define the space $Z^m(M,T)$ be the completion of $C^\infty(M \times [0,T])$ under the norm
\[
\|u\|_{Z^m(M,T)}^2 := \sum_{4j + k \leq 4m-2 }\int_{M \times [0,T]} |\partial_t^j \nabla^k u|^2 dx dt.
\]
If the operator $\partial_t + e^{-4u} \Delta^2$ is uniformly parabolic, then the map
\begin{equation}
\Phi : P^m(M,T) \to W^{2(2m-1),2}(M) \times P^{m-1}(M,T), \quad \Phi(f) = (f_0, (\partial_t + e^{-4u} \Delta^2)f)
\end{equation}
is a $C^1$ mapping for $m > \frac{14}{8}$. (Lemma 2.5 in \cite{MM12})
Hence we consider the case $m=2$, that means, we consider $f \in P^2(M,T)$ and $u \in Z^2(M,T)$.
By Sobolev embedding, there exists a universal constant $C$ such that
\[
\begin{split}
 \|\nabla^7 f\|_{L^{\frac{8}{3}}(M \times [0,T])}, \|\nabla^3 f_t\|_{L^{\frac{8}{3}}(M \times [0,T])}, \|\nabla^6 f\|_{L^4(M \times [0,T])}, \|\nabla^2 f_t\|_{L^4(M \times [0,T])}, & \\ 
 \|\nabla^5 f\|_{L^8(M \times [0,T])}, \|\nabla f_t\|_{L^8(M \times [0,T])} , \|\nabla^4 f\|_{L^p(M \times [0,T])}, \| f_t\|_{L^p(M \times [0,T])} \leq& C \|f\|_{P^2}
 \end{split}
 \]
 for any $p>1$, and
 \[
 \begin{split}
 \|f\|_{C^0(M \times [0,T])} , \|\nabla f\|_{C^0(M \times [0,T])}, \|\nabla^2 f\|_{C^0(M \times [0,T])}, \|\nabla^3 f\|_{C^0(M \times [0,T])} \leq& C \|f\|_{P^2}.
 \end{split}
 \]
Similarly, $u$ satisfies
\[
\begin{split}
 \|\nabla^5 u\|_{L^{\frac{8}{3}}(M \times [0,T])}, \|\nabla u_t\|_{L^{\frac{8}{3}}(M \times [0,T])}, \|\nabla^4 u\|_{L^4(M \times [0,T])}, \|u_t\|_{L^4(M \times [0,T])}, & \\ 
 \|\nabla^3 u\|_{L^8(M \times [0,T])} , \|\nabla^2 u\|_{L^p(M \times [0,T])} \leq& C \|u\|_{Z^2}
\end{split}
\]
for any $p>1$, and
\[
 \|u\|_{C^0(M \times [0,T])} , \|\nabla u\|_{C^0(M \times [0,T])}  \leq C \|u\|_{Z^2}.
\]
Also, by standard parabolic theory, we have
\[
\begin{split}
\sup_{0 \leq t \leq T} \|f(t)\|_{W^{6,2}(M)} , \sup_{0 \leq t \leq T} \|f_t(t)\|_{W^{2,2}(M)} \leq& C \|f\|_{P^2},\\
\sup_{0 \leq t \leq T} \|u(t)\|_{W^{4,2}(M)}   \leq& C \|u\|_{Z^2}.
\end{split}
\]
%Note that for $n=3,4$, Sobolev embedding implies that for any $f \in W^{2,2}(M)$ and for any $q>1$,
%\[
%\|f\|_{L^q(M)} \leq C\| f\|_{W^{1,4}(M)} \leq C \| f\|_{W^{2,2}(M)}.
%\]

Fix $f_0 \in W^{6,2}(M,\mathbb{R}^L)$.
If $\|u\|_{C^0} \leq 1$, then the operator $\partial_t + e^{-4u}\Delta^2$ is uniformly parabolic.
Hence there exists $c_1>0$ such that for each $g \in P^1$, there exists a unique solution $h \in P^2$ of the equation
\begin{equation}
(\partial_t + e^{-4u} \Delta^2) h = g, \quad h(0)=f_0
\end{equation}
with
\begin{equation} \label{c_1}
\|h\|_{P^2} \leq c_1 (\|f_0\|_{W^{6,2}} + \|g\|_{P^1}).
\end{equation}

Let $h_0$ be the unique solution of the equation
\[
(\partial_t + \Delta^2) h_0 = 0, \quad h(0) = f_0.
\]
By \eqref{c_1}, there exists a constant $c_2$ only depending on $c_1$ and $\|f_0\|_{W^{6,2}}$ such that
\begin{equation}
\|h_0\|_{P^2} \leq c_2.
\end{equation}

Now for $\delta>0$, consider the closed ball $B_\delta$ in $P^2$
\begin{equation} \label{B_delta}
B_\delta = \{f \in P^2 : f(0) = f_0 \text{ and } \| f - h_0\|_{P^2} \leq \delta\}.
\end{equation}
Also, for $\delta' > 0$, consider the closed ball $\tilde{B}_{\delta'}$ in $Z^2$
\begin{equation} \label{B_delta '}
\tilde{B}_{\delta'} = \{u \in Z^2 : u(0) = 0 \text{ and } \|u \|_{Z^2} \leq \delta'\}.
\end{equation}
Here we assume $\delta'$ small enough so that $\|u\|_{Z^2} \leq \delta'$ implies $\|u\|_{C^0} \leq 1$.
We also assume $\delta ' \leq \frac{\delta}{2 c_1 c_3}$ where $c_3$ is given in \Cref{necessary}.

We are going to construct two operators $S_1 : P^2 \times Z^2 \to P^2$ and $S_2 : P^2 \times Z^2 \to Z^2$ as follows.
Define $S_1(f,u) = h$ where $h \in P^2$ is the unique solution of
\begin{equation} \label{S_1}
(\partial_t + e^{-4u} \Delta^2 ) h = e^{-4u} \left( \Delta (A(df,df)) - \langle \Delta f, \Delta P \rangle + 2 \nabla \langle \Delta f, \nabla P \rangle \right), \quad  h(0) = f_0.
\end{equation}
Similarly, define $S_2(f,u) = v$ where $v \in Z^2$ is the unique solution of
\begin{equation} \label{S_2}
\partial_t v = b e^{-4u} (|\nabla df|^2 + |df|^4) - a, \quad v(0)=0.
\end{equation}

\begin{prop} \label{B to B}
For $f \in P^2$, $u \in Z^2$, $S_1(f,u) \in P^2$ and $S_2(f,u) \in Z^2$.

Moreover, there exists $T_1 = T_1(c_2,\delta,\delta')>0$ such that $S_1$ restricts to $S_1 : B_\delta \times \tilde{B}_{\delta'} \to B_\delta$ and $S_2$ restricts to $S_2 : B_\delta \times \tilde{B}_{\delta'} \to \tilde{B}_{\delta'}$.
\end{prop}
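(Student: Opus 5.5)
The plan follows the fixed-point scheme of \cite{MM12}, \cite{P22}, \cite{P25}. The proof splits into a mapping statement, that $S_1, S_2$ land in $P^2, Z^2$, and a smallness statement, that for $T$ small each map preserves its ball. Throughout I will use the Sobolev embeddings listed above: for $f\in P^2$, the quantities $f,\nabla f,\nabla^2 f,\nabla^3 f$ are bounded in $C^0$, and for $u\in Z^2$, $u,\nabla u$ are bounded in $C^0$. I will also use the product estimates in mixed $L^p$ spaces that these embeddings give.

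\textbf{Well-definedness of $S_1$.} Since $\|u\|_{C^0}\le 1$, the operator $\partial_t+e^{-4u}\Delta^2$ is uniformly parabolic. By \eqref{c_1}, it therefore suffices to show that the right-hand side $G(f,u):=e^{-4u}B(f)$ of \eqref{S_1} lies in $P^1$. The key structural point is that $B$ involves at most three derivatives of $f$, linearly in the top order, through terms like $A(\nabla \Delta f,df)$. Hence $\nabla^4 G$ and $\partial_t G$ contain at most $\nabla^7 f$, $\nabla^3 f_t$ and $\nabla^4 u$, $u_t$, multiplied by lower-order factors. I will expand by the Leibniz rule and place the top-order factors in $L^{8/3}$ or $L^4$ and the rest in $L^8$, $L^p$ or $C^0$, using the embedding list. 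I expect this to give
\[
\|G(f,u)\|_{P^1}\le C\big(\|f\|_{P^2},\|u\|_{Z^2}\big).
\]
Only the $L^2$ norm over $M\times[0,T]$ is needed, so every term that is not top order also comes with a factor $T^{\theta}$ for some $\theta>0$, by H\"older in time.

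\textbf{Well-definedness of $S_2$.} Here $v(t)=\int_0^t \big(be^{-4u}(|\nabla df|^2+|df|^4)-a\big)\,ds$ is explicit. I would check $v\in Z^2$ by differentiating: $\nabla^k v$ with $k\le 6$ and $\partial_t\nabla^2 v$ involve at most $\nabla^8 f$. That is one derivative more than $P^2$ controls in $L^2$, but it appears only inside a time integral, since $v_t$ itself has $\nabla^2 v_t\sim \nabla^4 f$. So I will estimate $\partial_t^j\nabla^k v$ directly: for $j\ge1$, through the integrand; for $j=0$, through $\|\nabla^k v(t)\|_{L^2}\le \int_0^t\|\nabla^k(\cdot)\|_{L^2}$.

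\emph{This is where I expect the main obstacle.} The exact derivative count must close with $4j+k\le 6$, and if needed I would use the $W^{6,2}$-in-time bound $\sup_t\|f(t)\|_{W^{6,2}}\le C\|f\|_{P^2}$ together with Minkowski's inequality. Each term will again carry a positive power of $T$, because $v(0)=0$.

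\textbf{Smallness for $S_2$.} We have $\|S_2(f,u)\|_{Z^2}\le T^{\theta}C(c_2+\delta,\delta')$, which is at most $\delta'$ for $T$ small.

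\textbf{Smallness for $S_1$.} I write $S_1(f,u)-h_0=w$, which solves
\[
(\partial_t+e^{-4u}\Delta^2)w=G(f,u)+(1-e^{-4u})\Delta^2 h_0,\qquad w(0)=0.
\]
By \eqref{c_1} with zero initial data, $\|w\|_{P^2}\le c_1\big(\|G\|_{P^1}+\|(1-e^{-4u})\Delta^2h_0\|_{P^1}\big)$. For the first term I use the $T^\theta$ gain from above. For the second term I use \Cref{necessary}: presumably it states $\|(1-e^{-4u})\Delta^2 h_0\|_{P^1}\le c_3\|u\|_{Z^2}$, or a similar bound. This is where $\delta'\le \delta/(2c_1c_3)$ enters, making that contribution at most $\delta/2$. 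Finally, choosing $T_1$ so that $c_1 T_1^\theta C(c_2+\delta,\delta')\le\delta/2$ gives $S_1(B_\delta\times\tilde B_{\delta'})\subset B_\delta$ and completes the proof.
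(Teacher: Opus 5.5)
Your proposal is correct and follows essentially the same route as the paper. You show $e^{-4u}B\in P^1$, with a small power of $T$ coming from the top-order factors $\nabla^7 f,\nabla^3 f_t\in L^{8/3}$ and $\nabla^4 u,u_t\in L^4$; you bound $v=S_2(f,u)$ through its time integral; and you control $S_1(f,u)-h_0$ by \eqref{c_1} together with \Cref{necessary} (applied with $u_1=0$, $u_2=u$, $h=h_0$, using $\Delta^2 h_0=-\partial_t h_0$) and $\delta'\le\delta/(2c_1c_3)$.

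The obstacle you anticipate for $S_2$ does not arise. The range $4j+k\le 8$ in the $P^2$ norm includes $\nabla^8 f$, so $\|\nabla^8 f\|_{L^2(M\times[0,T])}\le\|f\|_{P^2}$, and Minkowski in time gives $\|\nabla^6 v\|_{L^2}^2\le CT$ directly, as in \eqref{v est}.
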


The following lemma is needed in the proof.

\begin{lemma} \label{necessary}
There exists $T_2 = T_2(c_2,\delta,\delta')>0$ such that for all $T \leq T_2$, for any $h \in B_\delta$ and for each $u_1,u_2 \in \tilde{B}_{\delta'}$,
\begin{equation}
\|(e^{4u_1-4u_2}-1) \partial_t h\|_{P^1} \leq c_3 \|u_1-u_2\|_{Z^2}
\end{equation}
for some constant $c_3$ only depending on $c_2$ and $\delta$.
\end{lemma}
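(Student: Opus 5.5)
We need to bound $\|(e^{4u_1-4u_2}-1)\partial_t h\|_{P^1}$, where $P^1$ collects $L^2(M\times[0,T])$ norms of $\partial_t^j\nabla^k$ with $4j+k\le 4$. The terms are therefore $\nabla^k[(e^{4w}-1)h_t]$ for $k\le 4$, plus $\partial_t[(e^{4w}-1)h_t]$, where $w=u_1-u_2$. The plan is to expand each of these by the Leibniz rule, place the factor involving $w$ in a norm controlled linearly by $\|w\|_{Z^2}$, and place the factor involving $h$ in a norm controlled by $\|h\|_{P^2}\le c_2+\delta$. Since $w(0)=0$, we have $w\in Z^2$ with $\|w\|_{Z^2}\le 2\delta'$, and all the Sobolev bounds listed above for $Z^2$ apply to $w$ (and to $u_1,u_2$). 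In particular $\|w\|_{C^0}\le C\|w\|_{Z^2}$ and $\|w\|_{C^0}\le 2$.

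\textbf{Pointwise reduction.} The first step is to control derivatives of $F(w)=e^{4w}-1$. We have $|F(w)|\le C|w|$. The derivatives $\nabla^k F(w)$, $k\le4$, are sums of products $e^{4w}\nabla^{i_1}w\cdots\nabla^{i_m}w$ with $\sum i_j=k$, and $\partial_t F=4e^{4w}w_t$. Every such product contains at least one factor of $w$, so it is bounded by $C\|w\|_{Z^2}$ times powers of $\|u_i\|_{Z^2}\le\delta'$. To see this, use $\|\nabla w\|_{C^0}\le C\|w\|_{Z^2}$, and estimate the highest-order factors $\nabla^2 w,\nabla^3 w,\nabla^4 w,w_t$ in $L^p$, $L^8$, $L^4$, $L^4$ respectively.

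\textbf{H\"older pairing.} Next, match each term with an $h$-factor. The available bounds are $\|\nabla^k h_t\|$ in $L^p$, $L^8$, $L^4$, $L^{8/3}$ for $k=0,1,2,3$, together with $\nabla^4 h_t=\nabla^4\partial_t h\in L^2$ and $h_{tt}\in L^2$, since $4\cdot 2\le 8$ and $P^2$ contains $\partial_t^2$. The pairings are:
\[
\nabla^4F\cdot h_t \in L^4\cdot L^4,\quad \nabla^3F\cdot\nabla h_t\in L^8\cdot L^8,\quad \nabla^2F\cdot\nabla^2h_t\in L^p\cdot L^4,
\]
\[
\nabla F\cdot \nabla^3 h_t\in C^0\cdot L^{8/3},\quad F\cdot\nabla^4h_t\in C^0\cdot L^2 .
\]
In each product one factor carries $\|w\|_{Z^2}$. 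For the time derivative, the pairings are $w_t h_t\in L^4\cdot L^p$ and $F h_{tt}\in C^0\cdot L^2$.

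\textbf{Main obstacle: the $L^{8/3}$ pairing.} The pairing $\nabla F\cdot\nabla^3 h_t$ puts $\nabla^3 h_t$ only in $L^{8/3}$, which is not $L^2$ on the nose. Since the domain $M\times[0,T]$ is finite, H\"older gives $L^{8/3}\subset L^2$, with a factor $(\mathrm{Vol}(M)\,T)^{1/8}$. Finite measure is needed in several such places, and this is where the restriction $T\le T_2$ enters: it keeps the embedding constants uniform. (If the embedding constants are uniform in $T$, $T_2$ merely ensures the inclusion constants are at most $1$.)

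Collecting all terms gives
\[
\|(e^{4u_1-4u_2}-1)\partial_t h\|_{P^1}\le C(1+\delta')^4(c_2+\delta)\|w\|_{Z^2},
\]
so we may take $c_3=C(c_2+\delta)$, depending only on $c_2$ and $\delta$ since $\delta'\le 1$.
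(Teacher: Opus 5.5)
Your proposal is essentially the paper's proof: Leibniz expansion of $(e^{4(u_1-u_2)}-1)\partial_t h$, bounds on derivatives of $e^{4(u_1-u_2)}$ that each carry at least one factor controlled by $\|u_1-u_2\|_{Z^2}$, and H\"older pairing against the listed Sobolev bounds for $\partial_t h$ --- your explicit pairings fill in what the paper leaves as ``similar argument''. One caveat, shared with the paper: small $T$ does not make the quoted embedding constants for $h$ uniform, because $\partial_t h(0)$ need not vanish and the $P^2(M\times[0,T])$-norm controls it only with constants that blow up as $T\to 0$ (so e.g.\ $\|\partial_t h\|_{L^4(M\times[0,T])}\le C\|h\|_{P^2}$ is not $T$-uniform, whereas the bounds for $u_1-u_2$, which vanishes at $t=0$, are), and so a genuinely $T$-independent $c_3$ needs more care than either argument supplies.
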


\begin{proof}
Observe that $|e^{4u_1-4u_2}-1 | \leq C |u_1-u_2|$ for some constant $C$ if $\|u_1\|_{C^0},\|u_2\|_{C^0} \leq 1$.
Also, for $u \in Z^2$, with $\|u\|_{C^0} \leq 2$, we have
\[
\begin{split}
|\nabla^4 (e^u)| \leq& e^u |\nabla^4 u| + C \left( |\nabla^3 u| |\nabla u| + |\nabla^2 u|^2 + |\nabla^2 u| |\nabla u|^2 + |\nabla u|^4 \right)
\end{split}
\]
hence for all $T$ small enough, $\| \nabla^4 (e^u)\|_{L^2(M \times [0,T])} \leq C(\|u\|_{C^0}) \|u\|_{Z^2}$.
Similar argument gives that for all $T$ small enough, we have $\|\partial_t (e^u)\|_{L^2(M \times [0,T])} \leq C(\|u\|_{C^0}) \|u \|_{Z^2}$ for all $T$ small enough.

Consider
\[
\begin{split}
|\nabla^4 \left( (e^{4u_1-4u_2}-1) \partial_t h \right) | \leq& C \left( |\nabla^4 e^{4(u_1-u_2)}| |\partial_t h| \right.\\
& \left. + |\nabla^3 e^{4(u_1-u_2)}| |\partial_t \nabla h|  + |\nabla^2 e^{4(u_1-u_2)}| |\partial_t \nabla^2 h| \right.\\
& \left. + |\nabla e^{4(u_1-u_2)}| |\partial_t \nabla^3 h| + |u_1-u_2| |\partial_t \nabla^4 h|  \right).
\end{split}
\]
Hence, we get
\[
\begin{split}
\| \nabla^4 \left( (e^{4u_1-4u_2}-1) \partial_t h \right) \|_{L^2(M \times [0,T])}^2 \leq& C(\|h\|_{P^2})  \|u_1-u_2\|_{Z^2}^2 T\\
& +  C \|u_1-u_2\|_{C^0}^2 \|h\|_{P^2}^2\\
\leq&  C c_2 \|u_1-u_2\|_{Z^2}^2 
\end{split}
\]
if $T$ is small enough.
Similar argument will complete the proof.
\end{proof}

\begin{proof}
(Proof of \Cref{B to B})

First, check $S_1(f,u) \in P^2$.
It is enough to show that $e^{-4u} B \in P^1$ where $B = \Delta (A(df,df)) - \langle \Delta f, \Delta P \rangle + 2 \nabla \langle \Delta f, \nabla P \rangle$.
As above, we have
\[
\begin{split}
|B| \leq & C \big( |\Delta df| |df| + |\nabla df|^2 + |\nabla df| |df|^2 + |df|^4 + |\Delta f|^2 + |\Delta f| |df|^2 \\
& \qquad + |\nabla \Delta f| |df| + |\nabla df| |\Delta f| \big).
\end{split}
\]
Hence
\begin{equation} \label{g est}
\begin{split}
\| \nabla^4 (e^{-4u} B)\|_{L^2(M \times [0,T])}^2 \leq& C(\|f\|_{P^2}, \|u\|_{Z^2}) \sum_{k_1+k_2=4} \| |\nabla^{k_1+3} f| |\nabla^{k_2} u| \|_{L^2(M \times [0,T])}^2\\
\leq&C(\|f\|_{P^2}, \|u\|_{Z^2}) T^{\frac{1}{4}}\\
\|\partial_t (e^{-4u} B) \|_{L^2(M \times [0,T])}^2 \leq& C(\|f\|_{P^2}, \|u\|_{Z^2}) \left( \|\partial_t \nabla^3 f\|_{L^2(M \times [0,T])}^2 + \|\partial_t u\|_{L^2(M \times [0,T])}^2 \right)\\
\leq&C(\|f\|_{P^2}, \|u\|_{Z^2}) T^{\frac{1}{4}}.
\end{split}
\end{equation}
This concludes $S_1(f,u) \in P^2$.
And $v = S_2(f,u) \in Z^2$ comes from the fact that
\begin{equation} \label{v est}
\begin{split}
\| \nabla^6 v \|_{L^2(M \times [0,T])}^2 \leq& C(\|f\|_{P^2}, \|u\|_{Z^2}) \sum_{k_1+k_2=6} \| | \nabla^{k_1+2} f| |\nabla^{k_2} u| \|_{L^2(M \times [0,T])}^2 T\\
\leq& C(\|f\|_{P^2}, \|u\|_{Z^2}) T\\
\|\partial_t \nabla^2 v\|_{L^2(M \times [0,T])}^2 \leq& C(\|f\|_{P^2}, \|u\|_{Z^2})  \sum_{k_1+k_2=2} \| \nabla^{k_1+2} f| |\nabla^{k_2} u| \|_{L^2(M \times [0,T])}^2 \\
\leq& C(\|f\|_{P^2}, \|u\|_{Z^2}) T^{1-\frac{2}{p}}
\end{split}
\end{equation}
for any $p>1$.
This observation also suggests that, for $f \in B_\delta$ and $u \in \tilde{B}_{\delta'}$, and for $T$ small enough, we have $v = S_2(f,u) \in \tilde{B}_{\delta'}$.

Next, denote $h = S_1(f,u)$ and $g = e^{-4u} B$.
$h-h_0$ satisfies
\[
(\partial_t + e^{-4u} \Delta^2) (h - h_0) = g + (e^{-4u}-1) \Delta^2 h_0, \quad  (h-h_0)(0) = 0.
\]
Hence by \eqref{c_1}, we get
\[
\|h-h_0\|_{P^2} \leq c_1 \|g + (e^{-4u}-1) \Delta^2 h_0\|_{P^1} \leq c_1 \|g\|_{P^1} + c_1 \|(e^{-4u}-1) \partial_t h_0\|_{P^1}.
\]

Above, we showed that for all $T$ small enough,
\[
\| \nabla^4 g\|_{L^2(M \times [0,T])}, \|\partial_t g\|_{L^2(M \times [0,T])} \leq \frac{\delta}{4c_1}.
\]
For the second term, from \Cref{necessary},
\[
\|(e^{-4u}-1) \partial_t h_0\|_{P^1} \leq c_3 \|u\|_{Z^2} \leq \frac{\delta}{2c_1}.
\]
This completes the proof.
\end{proof}

Next, we show that difference in inputs of $S_1,S_2$ can be controlled by the corresponding norms in $P^2$ or $Z^2$.
More precisely, we show the following proposition.

\begin{prop} \label{S est}
There exists $T_3 = T_3(c_2,\delta,\delta')>0$ such that for all $T \leq T_3$ the followings hold.
\begin{enumerate}
\item
For any $f \in B_\delta$ and $u_1,u_2 \in \tilde{B}_{\delta'}$, we have
\begin{align}
\|S_1(f,u_1)-S_1(f,u_2)\|_{P^2} \leq& c_1 c_3 \|u_1-u_2\|_{Z^2} \label{est 1}\\
\|S_2(f,u_1)-S_2(f,u_2)\|_{Z^2} \leq& \frac{1}{3} \|u_1-u_2\|_{Z^2}. \label{est 2}
\end{align}
\item
For any $f_1,f_2 \in B_\delta$ and $u \in \tilde{B}_{\delta'}$, we have
\begin{align}
\|S_1(f_1,u)-S_1(f_2,u)\|_{P^2} \leq& \frac{1}{3} \|f_1-f_2\|_{P^2} \label{est 3}\\
\|S_2(f_1,u)-S_2(f_2,u)\|_{Z^2} \leq& T^{\frac{1}{2}} \|f_1-f_2\|_{P^2}. \label{est 4}
\end{align}
\end{enumerate}
\end{prop}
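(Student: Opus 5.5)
Each of the four estimates comes from writing the difference of two outputs as the solution of a linear problem with zero initial data, and then applying the parabolic estimate \eqref{c_1} for $S_1$ or direct time integration for $S_2$. Smallness is obtained throughout by shrinking $T$, using the fact that the $P^1$ and $Z^2$ norms of lower-order products carry a positive power of $T$, as in \eqref{g est} and \eqref{v est}.

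For \eqref{est 1}, set $h_i = S_1(f,u_i)$. Multiplying \eqref{S_1} by $e^{4u_i}$ gives $e^{4u_i}\partial_t h_i + \Delta^2 h_i = B(f)$, and the right side is the same for $i=1,2$. Subtracting, and then multiplying by $e^{-4u_1}$, yields
\[
(\partial_t + e^{-4u_1}\Delta^2)(h_1-h_2) = (e^{4u_2-4u_1}-1)\partial_t h_2, \qquad (h_1-h_2)(0)=0.
\]
By \Cref{B to B} we have $h_2\in B_\delta$. Then \eqref{c_1} together with \Cref{necessary} (with the roles of $u_1,u_2$ swapped) gives $\|h_1-h_2\|_{P^2}\le c_1c_3\|u_1-u_2\|_{Z^2}$ for $T\le T_2$.

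For \eqref{est 2}, set $v_i=S_2(f,u_i)$. Then
\[
\partial_t(v_1-v_2)=b(e^{-4u_1}-e^{-4u_2})(|\nabla df|^2+|df|^4), \qquad (v_1-v_2)(0)=0.
\]
I will estimate $\nabla^6(v_1-v_2)$ and $\partial_t\nabla^2(v_1-v_2)$ in $L^2$ exactly as in \eqref{v est}. The factor $e^{-4u_1}-e^{-4u_2}$ and its derivatives up to order six are bounded pointwise by Leibniz terms that are linear in $\nabla^k(u_1-u_2)$, with coefficients controlled by $\|u_i\|_{Z^2}\le\delta'$ and $\|u_i\|_{C^0}\le1$. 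Combining the embeddings listed for $P^2$ and $Z^2$ with H\"older's inequality in time gives a bound $C T^{\gamma}\|u_1-u_2\|_{Z^2}$ for some $\gamma>0$. For example, $\nabla^6 v=\int_0^t\nabla^6(\cdots)$, and Cauchy--Schwarz in time yields a factor $T$. Choosing $T$ small makes $CT^\gamma\le 1/3$.

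For \eqref{est 3}, set $h_i=S_1(f_i,u)$. The difference satisfies $(\partial_t+e^{-4u}\Delta^2)(h_1-h_2)=e^{-4u}(B(f_1)-B(f_2))$ with zero initial data, so by \eqref{c_1} it suffices to show
\[
\|e^{-4u}(B(f_1)-B(f_2))\|_{P^1}\le \frac{1}{3c_1}\|f_1-f_2\|_{P^2}.
\]
$B$ is a polynomial expression in $\nabla^{\le3}f$ with coefficients $A,DA,P,\dots$ evaluated at $f$. Its difference is therefore a sum of terms, each linear in $\nabla^{j}(f_1-f_2)$ or in $(f_1-f_2)$ times a bounded function (using $\|D^kA\|\le C_N$), multiplied by factors controlled by $\|f_i\|_{P^2}\le c_2+\delta$. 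After applying $\nabla^4$ and $\partial_t$, the highest-order pieces are $\nabla^7(f_1-f_2)$ times $\nabla f_i$ (bounded in $C^0$) and $\partial_t\nabla^3(f_1-f_2)$ times bounded factors. These are not small in $T$ for free, which is the main obstacle.

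To handle that obstacle, I will first try to exploit that $f_1-f_2$ vanishes at $t=0$. This gives $\|\nabla^3(f_1-f_2)\|_{C^0}\le C\|f_1-f_2\|_{P^2}$, and interpolation estimates with a factor $T^{\gamma}$ hold for any component whose order is strictly lower than the top order of $P^2$. The genuinely top-order terms, $\nabla f\cdot\nabla^7(f_1-f_2)$ and $\nabla^3(f_1-f_2)\cdot\nabla^4 f_i$, need care. The first can be placed in $L^{8/3}$ and then in $L^2$ via H\"older with a factor of $T$ on the bounded coefficient. Failing that, I will check that every term in $\nabla^4 B$ involves at most $\nabla^7 f$, so that it lies one derivative below the $\nabla^8$ level of $P^2$, and use the embeddings $\|\nabla^7 f\|_{L^{8/3}}$ and $\|\nabla^6f\|_{L^4}$ together with H\"older in time to extract $T^{1/8}$. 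The same argument as in \eqref{g est}, which produced $T^{1/4}$, should carry over with the difference inserted linearly.

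For \eqref{est 4}, the $S_2$ difference is
\[
\partial_t(v_1-v_2)=be^{-4u}\big(|\nabla df_1|^2-|\nabla df_2|^2+|df_1|^4-|df_2|^4\big).
\]
This only involves $\nabla^{\le 2+6}$ of $f$ after six spatial derivatives, which is within the $P^2$ range. Integrating in time and applying Cauchy--Schwarz gives a factor $T^{1/2}$ once the constants are absorbed by shrinking $T$, exactly parallel to \eqref{v est}. Finally, $T_3$ is taken to be the minimum of all the smallness thresholds above.
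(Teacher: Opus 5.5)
Your proposal is correct and follows essentially the paper's route. It uses zero-initial-data linear problems for the differences, with \eqref{c_1} and \Cref{necessary} (applied to $h_2=S_1(f,u_2)\in B_\delta$) for \eqref{est 1}, and time integration with the $C^0$ and $L^p$ embeddings of $P^2$ and $Z^2$ for \eqref{est 2} and \eqref{est 4}. For \eqref{est 3} it uses the observation that $\nabla^4$ and $\partial_t$ of $e^{-4u}(B(f_1)-B(f_2))$ involve only $f$-derivatives up to $\nabla^7$ and $\partial_t\nabla^3$, one level below the top order of $P^2$, so H\"older with the $L^{8/3}$ embeddings gives a positive power of $T$.

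Your difference equations, with $e^{4u_2-4u_1}-1$ and $e^{-4u_1}-e^{-4u_2}$, in fact correct small slips in the paper's formulas. One caveat, which the paper shares: the literal constant $1$ in front of $T^{1/2}$ in \eqref{est 4} needs an exponent of $T$ strictly above $1/2$ for the $\partial_t\nabla^{\le 2}(v_1-v_2)$ terms, which you can get from $f_1-f_2=0$ at $t=0$; in any case only smallness is used later.
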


\begin{proof}

{\underline {Case (1)}:}

Fix $f \in B_\delta$ and $u_1,u_2 \in \tilde{B}_{\delta'}$.
Denote $h_1 = S_1(f,u_1), h_2 = S_1(f,u_2), v_1 = S_2(f,u_1), v_2 = S_2(f,u_2)$.
Then $h_1-h_2$ satisfies
\[
(\partial_t + e^{-4u_1} \Delta^2) (h_1-h_2) = (e^{4u_1-4u_2}-1) \partial_t h_2, (h_2-h_1)(0) = 0.
\]
Then by \eqref{c_1} and \Cref{necessary},
\[
\|h_1-h_2\|_{P^2} \leq c_1 \|(e^{4u_1-4u_2}-1) \partial_t h_2\|_{P^1} \leq c_1 c_3 \|u_1-u_2\|_{Z^2}
\]
which shows \eqref{est 1}.

$v_1-v_2$ satisfies
\[
\begin{split}
v_1-v_2 =& b \int_{0}^{t} e^{-4(u_1-u_2)} (|\nabla df|^2 + |df|^4)(s) ds.
\end{split}
\]
Hence, similar to \eqref{v est},
\[
\begin{split}
\|\nabla^6 (v_1-v_2)\|_{L^2(M \times [0,T])}^2 \leq& C( \delta, \delta') \sum_{k_1+k_2 = 6} \| |\nabla^{k_1+2} f| |\nabla^{k_2} (u_1-u_2)| \|_{L^2(M \times [0,T])}^2 T\\
\leq& C(\delta, \delta')\|f\|_{P^2}^2 \|u_1-u_2\|_{Z^2}^2 T\\
\|\partial_t \nabla^2 (v_1-v_2)\|_{L^2(M \times [0,T])}^2 \leq& C( \delta, \delta') \sum_{k_1+k_2 = 2} \| |\nabla^{k_1+2} f| |\nabla^{k_2} (u_1-u_2)| \|_{L^2(M \times [0,T])}^2 \\
\leq& C(\delta, \delta')  \|f\|_{P^2}^2 \|u_1-u_2\|_{Z^2}^2 T^{1 - \frac{2}{p}}
\end{split}
\]
for any $p>1$, which implies that for all $T$ small enough, \eqref{est 2} holds.

{\underline {Case (2)}:}

Fix $f_1,f_2 \in B_\delta$ and $u \in \tilde{B}_{\delta'}$.
Denote $h_1 = S_1(f_1,u), h_2 = S_1(f_2,u), v_1 = S_2(f_1,u), v_2 = S_2(f_2,u)$.
Then $h_1-h_2$ satisfies
\[
\begin{split}
(\partial_t + e^{-4u}\Delta^2) (h_1-h_2) =& e^{-4u} \big( \Delta (A(f_1)(df_1,df_1)) - \langle \Delta f_1, \Delta P(f_1) \rangle + 2 \nabla \langle \Delta f_1, \nabla P(f_1) \rangle \big)\\
&- e^{-4u} \big( \Delta (A(f_2)(df_2,df_2)) - \langle \Delta f_2, \Delta P(f_2) \rangle + 2 \nabla \langle \Delta f_2, \nabla P(f_2) \rangle \big)
\end{split}
\]
and $(h_1-h_2)(0) = 0$.
Now the right-hand side is $I + II + III + IV + V + VI$, where
\[
\begin{split}
I =& e^{-4u} \Delta \big( (A(f_1)-A(f_2)) (df_1,df_1) \big)\\
II =& e^{-4u} \Delta \big( A(f_2) (df_1+df_2, df_1-df_2) \big)\\
III =& e^{-4u} \langle \Delta (f_1-f_2),\Delta P(f_1) \rangle \\
IV =& e^{-4u} \langle \Delta f_2, \Delta (P(f_1) - P(f_2)) \rangle\\
V =& e^{-4u} 2 \nabla \langle \Delta (f_1-f_2), \nabla P(f_1) \rangle\\
VI =& e^{-4u} 2 \nabla \langle \Delta f_2, \nabla (P(f_1)-P(f_2)) \rangle.
\end{split}
\]
We claim that $\|I\|_{P^1}, \ldots, \|VI\|_{P^1} \leq \frac{1}{18 c_1} \|f_1-f_2\|_{P^2}$ if $T$ is small enough.
To see this, note that $|A(f_1)-A(f_2)| \leq C(\delta) |f_1-f_2|$ and so
\[
\begin{split}
\|\nabla^4 I \|_{L^2(M \times [t_1,t_2])}^2  \leq& C(\delta, \delta') \Bigg( \sum_{k_1+k_2+k_3 = 4} \| |\nabla^{k_1+2} (f_1-f_2)| |\nabla^{k_2+1} f_1 | |\nabla^{k_3} u| \|_{L^2(M \times [t_1,t_2])}^2 \\
&\qquad \qquad +   \| |\nabla^{k_1+1} (f_1-f_2)| |\nabla^{k_2+2} f_1 | |\nabla^{k_3} u| \|_{L^2(M \times [t_1,t_2])}^2 \\
&\qquad \qquad +  \| |\nabla^{k_1} (f_1-f_2)| |\nabla^{k_2+3} f_1 | |\nabla^{k_3} u| \|_{L^2(M \times [t_1,t_2])}^2 \Bigg)\\
\leq& C(\delta, \delta') \|f_1-f_2\|_{P^2}^2 T^{\frac{1}{4}}
\end{split}
\]
\[
\begin{split}
\|\partial_t I\|_{L^2(M \times [t_1,t_2])}^2 \leq& C(\delta, \delta') \Bigg( \| |\partial_t (f_1-f_2)| |\nabla f_1 | | u| \|_{L^2(M \times [t_1,t_2])}^2 \\
& \qquad \qquad + \| |(f_1-f_2)| |\partial_t \nabla f_1 | | u| \|_{L^2(M \times [t_1,t_2])}^2 \\
& \qquad \qquad +  \| |(f_1-f_2)| |\nabla f_1 | | \partial_t u| \|_{L^2(M \times [t_1,t_2])}^2 \Bigg)\\
\leq& C(\delta,\delta') \|f_1-f_2\|_{P^2}^2 T^{\frac{3}{4}}.
\end{split}
\]
Similar arguments can conclude the claim, and hence complete \eqref{est 3}.

$v_1-v_2$ satisfies
\[
\begin{split}
v_1 - v_2 =& b \int_{0}^{t} e^{-4u} (|\nabla df_1|^2 + |df_1|^4 - |\nabla df_2|^2 - |df_2|^4)(s) ds\\
=& b \int_{0}^{t} e^{-4u} ( \langle \nabla^2 (f_1-f_2), \nabla^2 (f_1+f_2) \rangle + (|df_1|^2 + |df_2|^2) \langle \nabla (f_1-f_2), \nabla (f_1+f_2) \rangle) (s)ds.
\end{split}
\]
Hence, similar to \eqref{v est},
\[
\begin{split}
\|\nabla^6 (v_1-v_2)\| &_{L^2(M \times [0,T])}^2\\
 \leq& C( \delta, \delta') \sum_{k_1+k_2+k_3 = 6} \| |\nabla^{k_1+2} (f_1-f_2)| |\nabla^{k_2+2} (f_1+f_2)| |\nabla^{k_3} u| \|_{L^2(M \times [0,T])}^2 T\\
\leq& C(\delta, \delta')\|f_1-f_2\|_{P^2}^2 \|u\|_{Z^2}^2 T,\\
\|\partial_t \nabla^2 (v_1-v_2)\|&_{L^2(M \times [0,T])}^2\\
 \leq& C( \delta, \delta') \sum_{k_1+k_2+k_3 = 2} \| |\nabla^{k_1+2} (f_1-f_2)| |\nabla^{k_2+2} (f_1+f_2)| |\nabla^{k_3} u| \|_{L^2(M \times [0,T])}^2\\
\leq& C(\delta, \delta') \|f_1-f_2\|_{P^2}^2 \|u\|_{Z^2}^2 T^{1-\frac{2}{p}}
\end{split}
\]
for any $p>1$, which implies that for all $T$ small enough, \eqref{est 4} holds.
\end{proof}

Now we are ready to show short-time existence.
First, we define a Banach space $X = P^2 \times Z^2$ equipped with the norm
\begin{equation}
\|(f,u)\|_{X} = (2c_1c_3)^{-1} \|f\|_{P^2} + \|u\|_{Z^2}.
\end{equation}

\begin{theorem} \label{short time existence}
Fix $f_0 \in W^{6,2}(M,N)$.
Then there exists $T_0 = T_0(c_2,\delta,\delta')>0$ such that a solution $(f,u) \in B_\delta \times \tilde{B}_{\delta'} \subset P^2 \times Z^2$ of \eqref{eq1} exists on $M \times [0,T_0]$ and $f(M \times [0,T_0]) \subset N$.
\end{theorem}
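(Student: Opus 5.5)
The plan is to apply the Banach fixed point theorem to the map $S = (S_1,S_2)$ on the closed set $B_\delta \times \tilde{B}_{\delta'} \subset X = P^2 \times Z^2$, with the weighted norm $\|(f,u)\|_X = (2c_1c_3)^{-1}\|f\|_{P^2} + \|u\|_{Z^2}$. First fix $T_0 \leq \min\{T_1,T_2,T_3\}$, and also small enough that $T_0^{1/2} \leq (6c_1c_3)^{-1}$. By \Cref{B to B}, $S$ maps $B_\delta \times \tilde{B}_{\delta'}$ into itself. This set is closed in $X$, since the constraints $f(0)=f_0$, $u(0)=0$ and the norm bounds pass to limits, using the trace embeddings $P^2 \to C^0([0,T],W^{6,2})$ and $Z^2 \to C^0([0,T],W^{4,2})$.

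For the contraction estimate, take $(f_1,u_1),(f_2,u_2)$ and split each difference through the intermediate pair $(f_1,u_2)$. By \eqref{est 1} and \eqref{est 3},
\[
\|S_1(f_1,u_1)-S_1(f_2,u_2)\|_{P^2} \leq c_1c_3\|u_1-u_2\|_{Z^2} + \tfrac13\|f_1-f_2\|_{P^2}.
\]
By \eqref{est 2} and \eqref{est 4},
\[
\|S_2(f_1,u_1)-S_2(f_2,u_2)\|_{Z^2} \leq \tfrac13\|u_1-u_2\|_{Z^2} + T^{1/2}\|f_1-f_2\|_{P^2}.
\]
Weighting the first line by $(2c_1c_3)^{-1}$ and adding the two gives a bound of
\[
\Big(\tfrac12+\tfrac13\Big)\|u_1-u_2\|_{Z^2} + \Big(\tfrac13(2c_1c_3)^{-1} + T^{1/2}\Big)\|f_1-f_2\|_{P^2}.
\]
With the choice of $T_0$ above, this is at most $\tfrac56\|(f_1,u_1)-(f_2,u_2)\|_X$. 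So $S$ is a contraction and has a unique fixed point $(f,u)$, which by construction solves \eqref{eq1} on $M\times[0,T_0]$ (in the $P^2\times Z^2$ sense, with $f_t$ continuous thanks to the Sobolev embeddings listed above).

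The main obstacle is showing $f(M\times[0,T_0])\subset N$, because the fixed-point construction only produces $f \in \mathbb{R}^L$. I would follow the standard argument used for biharmonic map flow (Lamm, and \cite{MM12}). Let $\pi$ be the nearest point projection onto $N$, defined on a tubular neighborhood. Since $f_0 \in N$ and $f$ is continuous in $C^0$, for small $T_0$ the image of $f$ stays inside that neighborhood. Set $\rho = f - \pi(f)$ and compute the evolution of $\int_M e^{4u}|\rho|^2$, or of a cutoff version of it. Using that $B$ is normal to $T_f N$, and that $d\pi$ annihilates normal directions, one should derive a linear differential inequality
\[
\frac{d}{dt}\int_M e^{4u}|\rho|^2 \leq C\int_M e^{4u}|\rho|^2 ,
\]
where $C$ depends on the $C^3$ bounds of $f$ and the $C^1$ bounds of $u$ available from the $P^2,Z^2$ embeddings. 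To get this, integrate by parts the $\Delta^2$ term against $\rho$. The leading term gives $-\int|\Delta\rho|^2$ plus lower order terms, and these are absorbed using that $\rho$ vanishes at $t=0$. Gronwall's inequality together with $\rho(0)=0$ then gives $\rho\equiv0$. The delicate point is checking that the extrinsic nonlinearity $B$, evaluated at a map not lying in $N$, still produces only terms that are linear in $\rho$ and its derivatives up to order three. I expect this bookkeeping, including extending $A$ and $P$ off $N$ as $\pi$-invariant fields, to be the hardest part of the proof.
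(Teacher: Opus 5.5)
Your fixed-point part is the paper's argument. It uses the same weighted norm on $X$, the same splitting through $(f_1,u_2)$ via \eqref{est 1}--\eqref{est 4}, and the same factor $\tfrac56$; your condition $T_0^{1/2}\le(6c_1c_3)^{-1}$ is just a slightly stronger form of the paper's $T_0^{1/2}\le(4c_1c_3)^{-1}$. You also check closedness of $B_\delta\times\tilde B_{\delta'}$, which the paper leaves implicit.

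The genuine difference is how you show that $f$ stays in $N$.
\begin{itemize}
\item The paper argues pointwise: $\partial_t|\Pi(f)-f|^2=-2\langle\Pi(f)-f,f_t\rangle$, and then it replaces $-\Delta^2 f+B$ by $-P(\Delta^2 f)$. That replacement is the identity $B=P^\perp\Delta^2 f$, which is derived for maps into $N$. At points where $f\notin N$, the one-line computation therefore presupposes its conclusion. For instance, where $N$ is locally an affine subspace, $A=0$ and $P$ is constant, so $B=0$ and $\partial_t|\rho|^2=-2e^{-4u}\langle\rho,\Delta^2\rho\rangle$, which need not vanish pointwise.
\item Your $L^2$--Gronwall argument for $\rho=f-\pi(f)$ costs more. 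It needs an extension of $A,P$ off $N$, the $C^3$ bound on $f$, bounds on $u,u_t$, and an interpolation step. In exchange, it is what actually works off $N$.
\end{itemize}

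The cancellation you leave vague is the following. Since $d\pi(f)f_t$ is tangent at $\pi(f)$, you have $\langle\rho,\rho_t\rangle=\langle\rho,f_t\rangle$, and
\[
\int_M e^{4u}\langle\rho,f_t\rangle=-\int_M\langle\rho,\Delta^2\rho\rangle+\int_M\big\langle\rho,-\Delta^2(\pi\circ f)+B(\pi\circ f)\big\rangle+\int_M\big\langle\rho,B(f)-B(\pi\circ f)\big\rangle .
\]
The middle term vanishes because $\pi\circ f$ does map into $N$. Hence $-\Delta^2(\pi\circ f)+B(\pi\circ f)=-P(\Delta^2(\pi\circ f))$ is tangent at $\pi\circ f$, while $\rho$ is normal there. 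So it is $B(\pi\circ f)$, not $B(f)$ as you wrote, that may be treated as normal.

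The last term is bounded by $C|\rho|\sum_{k\le3}|\nabla^k\rho|$. After one integration by parts of its third-order part, it is absorbed into $\int_M|\Delta\rho|^2$ by interpolation on the closed manifold $M$. This absorption does not use $\rho(0)=0$; the initial condition enters only through Gronwall.

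Two minor points:
\begin{itemize}
\item The listed embeddings do not give continuity of $f_t$, since $W^{2,2}(M)\not\subset C^0$ in dimension $4$. You only need absolute continuity of $t\mapsto\int_M e^{4u}|\rho|^2$, which does hold.
\item A continuation argument from $\sup\{t: f(M\times[0,t])\subset N\}$ avoids shrinking $T_0$ to keep $f$ in the tubular neighbourhood.
\end{itemize}
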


\begin{proof}
We let $T_0 = T_0(c_1,c_2,c_3, \delta,\delta')$ by
\begin{equation}
T_0 = \min \left\{ T_1,T_2,T_3, \left( \frac{1}{2} (2 c_1 c_3)^{-1} \right)^2 \right\} > 0.
\end{equation}
Now consider the operator $\mathcal{S} : X \to X$ given by $\mathcal{S} (f,u) = (S_1(f,u),S_2(f,u))$.
From \Cref{B to B}, since $T_0 \leq T_1$, $\mathcal{S}$ restricts to $\mathcal{S} : B_\delta \times \tilde{B}_{\delta'} \to B_\delta \times \tilde{B}_{\delta'}$.

Let $f_1,f_2 \in B_\delta$ and $u_1,u_2 \in \tilde{B}_{\delta'}$.
From \Cref{S est}, for any $T \leq T_0$, we have
\[
\begin{split}
\|\mathcal{S}(f_1,u_1)& - \mathcal{S}(f_2,u_2)\|_{X}\\
 =& (2c_1 c_3)^{-1} \|S_1(f_1,u_1) - S_1(f_2,u_2)\|_{P^2} + \|S_2(f_1,u_1) - S_2(f_2,u_2)\|_{Z^2}\\
\leq&(2c_1 c_3)^{-1} \|S_1(f_1,u_1) - S_1(f_1,u_2)\|_{P^2} + (2c_1 c_3)^{-1} \|S_1(f_1,u_2) - S_1(f_2,u_2)\|_{P^2}\\
& + \|S_2(f_1,u_1) - S_2(f_1,u_2)\|_{Z^2} + \|S_2(f_1,u_2) - S_2(f_2,u_2)\|_{Z^2}\\
\leq& \frac{1}{2} \|u_1 -u_2\|_{Z^2} + (2c_1 c_3)^{-1}  \frac{1}{3} \|f_1-f_2\|_{P^2} + \frac{1}{3} \|u_1-u_2\|_{Z^2} + T^{\frac{1}{2}} \|f_1-f_2\|_{P^2}\\
\leq& \frac{5}{6} \Big( (2c_1 c_3)^{-1} \|f_1-f_2\|_{P^2} + \|u_1-u_2\|_{Z^2} \Big)\\
 =& \frac{5}{6} \|(f_1,u_2)-(f_2,u_2)\|_{X}.
\end{split}
\]
By Banach fixed point theorem, there exists $(f,u) \in P^2 \times Z^2$ such that $S_1(f,u) = f$ and $S_2(f,u)=u$.
This shows that $(f,u)$ solves \eqref{eq1}.

Finally, we need to show that $f(M \times [0,T_0]) \subset N$.
As $N$ be a smooth Riemannian manifold isometrically embedded in $\mathbb{R}^L$, we can find its tubular neighborhood $N_\delta$ and consider the nearest point projection $\Pi_N : N_\delta \to N$.
Note that $P = \nabla \Pi(y) : \mathbb{R}^L \to T_y N$ is the orthogonal projection and $A(y) = -\nabla^2 \Pi(y) : T_y N \otimes T_y N \to (T_y N)^{\perp}$ for $y \in N$ is the second fundamental form of $N \hookrightarrow \mathbb{R}^L$.
Define
\begin{equation}
\rho(x,t) = \left| \Pi(f(x,t)) - f(x,t) \right|^2.
\end{equation}
Then $\rho(\cdot,0) = 0$ and by direct computation, we have
\[
\begin{split}
\partial_t \rho  = & 2 \langle \Pi(f)-f, \nabla \Pi (f_t) - f_t \rangle\\
=& - 2 e^{-4u} \langle \Pi(f)-f, \Delta^2 f - \Delta (A(df,df)) + \langle \Delta f, \Delta P \rangle - 2\nabla \langle \Delta f, \nabla P \rangle \rangle\\
=& -2 e^{-4u} \langle \Pi(f)-f, P(\Delta^2 f) \rangle = 0
\end{split}
\]
because $\Pi(f)-f \perp T_f N$ and $P(\Delta^2 f) \in T_f N$.
Hence, $\rho(\cdot, t) = 0$ for all $t$ and $f(M \times [0,T_0]) \subset N$.
\end{proof}

%%%%%%%%%%%%%%%%%%%%%%%%%%%%%%%%%%%%%%%%%%%%%%%%%%%%%%%%%%%%%%%%%%%%%%%%%%%%%
\section{Global regularity}
\label{sec7}

From \Cref{sec6}, there exists $T_0>0$ such that the smooth solution $(f,u)$ of \eqref{eq1} exists on $M \times [0,T_0)$.
Assume $T_0$ be the maximal time such that the solution $(f,u)$ is smooth on $M \times [0,T_0)$.
Also assume that $T_0 < \infty$, that is, finite time singularity exists.
We first show that the criterion for finite time singularity is energy concentration.
Then we show that such energy concentration cannot be happened, concluding that there is no finite time singularity.

First we show that the solution $(f,u)$ of \eqref{eq1} obtained in \Cref{short time existence} is smooth.
\begin{prop} \label{smooth}
Let $(f,u) \in P^2 \times Z^2$ be a solution of \eqref{eq1} on $M \times [0,T]$.
Then $(f,u)$ is smooth on $M \times [0,T]$.
\end{prop}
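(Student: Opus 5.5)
The plan is to run a global bootstrap on the compact manifold $M$, exploiting two facts. First, the fixed point lies in the ball $\tilde{B}_{\delta'}$, so $\|u\|_{C^0}\le 1$, which makes $\partial_t + e^{-4u}\Delta^2$ uniformly parabolic with bounded coefficient. Second, $P^2\times Z^2$ already controls $f,\nabla f,\nabla^2 f,\nabla^3 f$ and $u,\nabla u$ in $C^0(M\times[0,T])$. No cut-offs or smallness assumption like \eqref{ass} are needed here, since $|df|$ is already bounded. This is the global, initial-value analogue of the argument in \Cref{loc smooth}, where $t_1=0$ and the data at $t_1$ are $f_0$ and $u(0)=0$.

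\textbf{Step 1: $L^p$ bootstrap for $f$.} Write the first equation as $(\partial_t + e^{-4u}\Delta^2) f = e^{-4u}B$. By the pointwise bound on $|B|$ used in the proof of \Cref{B to B}, $B$ is controlled by $|\nabla^4 f|$ and lower-order terms, with coefficients bounded by $\|f\|_{C^3}$. The embeddings listed in \Cref{sec6} give $\nabla^4 f\in L^p(M\times[0,T])$ for every $p$, so $e^{-4u}B\in L^p$ for all $p$. Parabolic $L^p$ theory for the fourth-order operator, with initial datum $f_0\in W^{6,2}\subset W^{4-4/p,p}$ for $p$ in a suitable range, then gives $\nabla^4 f,\ \partial_t f\in L^p$ with good norms.

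\textbf{Step 2: bootstrap for $u$ and differentiated equations.} The formula \eqref{e 4u} expresses $e^{4u}$ as a time integral of $|\nabla df|^2+|df|^4$. Differentiating it in space, as in the proof of \Cref{loc smooth}, shows that $u$ has one fewer spatial derivative's worth of regularity than $\nabla^2 f$, uniformly in $t$. Next differentiate the $f$-equation $k$ times in space. Commutators with $e^{-4u}$ produce terms $\nabla^j u\,\nabla^{4+k-j}f$ with $j\ge 1$, and these are already controlled at the previous stage. This raises spatial regularity of $f$ by one order at each step, and correspondingly that of $u$. Once $f,df,\nabla df,\nabla^2 df$ are H\"older continuous, the right-hand side lies in $C^{\alpha,\alpha/4}$. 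Parabolic Schauder estimates combined with the same commutator induction then give all higher H\"older norms. Time derivatives are handled by expressing $\partial_t^j f$ through the equation, and $\partial_t u$ through the second equation of \eqref{eq1}.

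\textbf{Step 3: the endpoint $t=0$ (main obstacle).} For $t\ge t_0>0$ the bootstrap closes unconditionally. Take a time cut-off $\eta(t)$ vanishing near $0$, and apply Steps 1--2 to $\eta f$, which has zero initial data. The extra term $\eta' f$ has exactly the regularity of the previous stage. Up to $t=0$, however, the Schauder and $L^p$ estimates at order $k$ require the initial datum to lie in the matching trace space, $f_0\in C^{4+k+\alpha}$ or $W^{4+k,p}$. They also require the compatibility conditions at $\partial M\times\{0\}$, which are vacuous because $M$ is closed. So I would carry out the bootstrap up to $t=0$ order by order. At each order I would verify that the needed trace norm of $f_0$ is finite and that $u(0)=0$ is smooth. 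Smoothness on all of $M\times[0,T]$ then follows when $f_0$ is smooth. For $f_0$ merely $W^{6,2}$, the argument yields smoothness on $M\times(0,T]$ together with continuity in $W^{6,2}$ up to $t=0$.

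The delicate point is controlling the coupling. The regularity of $u$ is limited by that of $\nabla^2 f$ integrated in time, and that of $f$ is limited by $u$ through $e^{-4u}$. I expect the induction to close because $u$ only ever enters with no more derivatives than $f$ minus two.
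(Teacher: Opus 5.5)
Your proposal is correct and follows essentially the paper's route: establish uniform parabolicity of $\partial_t + e^{-4u}\Delta^2$, then run a Schauder bootstrap in which $u$ inherits the spatial regularity of $\nabla^2 f$. (The paper gets the lower bound on $e^{-4u}$ from the explicit formula \eqref{e 4u} and the $C^0$ bounds on $df,\nabla df$ coming from $P^2$; you use $\|u\|_{C^0}\le C\|u\|_{Z^2}$, which holds for any solution in $P^2\times Z^2$, not only the fixed point in $\tilde{B}_{\delta'}$.) Your Step 3 caveat is also right, and the paper's one-line proof glosses over it: for $f_0$ merely in $W^{6,2}$ one only gets smoothness on $M\times(0,T]$, and smoothness up to $t=0$ requires $f_0$ itself to be smooth, as when the flow is restarted from a smooth time slice in Section 7.
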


\begin{proof}
Since $f \in P^2$, we have $\|df\|_{C^0}, \|\nabla df\|_{C^0} \leq C \|f\|_{P^2} =: M < \infty$.
Then
\[
e^{-4u} = \frac{e^{4at}}{1 + 4b \int_{0}^{t} e^{4as} (|\nabla df|^2 + |df|^4) (s) ds} \geq \frac{1}{1 + 2\frac{b}{a} {M}^{4}}.
\]
So, the operator $\partial_t + e^{-4u} \Delta^2 $ is uniformly parabolic, hence by bootstrapping argument with $\nabla^2 df \in C^{\alpha,\alpha/2}$, $f$ is smooth, hence $u$ is also smooth on $M \times [0,T)$.
\end{proof}

Next, we develope the global version of \Cref{Der p=0}.
\begin{prop} \label{Der p=0 glob}
(Derivative estimate, global version)
Let $(f,u)$ be a smooth solution of \eqref{eq1} on $M \times [t_1,t_2]$.
Then
\begin{equation}
\begin{split}
\frac{d}{dt}  \int_{M} e^{4u}|f_t|^2  \leq& 4a \int_{M} e^{4u}|f_t|^2 -  \int_{M} |\Delta f_t|^2 -C_b \int_{M} |f_t|^2 (|\nabla df|^2 + |df|^4) .
\end{split}
\end{equation}
\end{prop}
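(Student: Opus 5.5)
The plan is to rerun the proof of \Cref{Der p=0} on all of $M$ with $\varphi\equiv 1$ (allowed because $M$ is closed). Every term carrying $\nabla\varphi$, $\Delta\varphi$ or $\nabla^2\varphi$ then vanishes identically. What is left is to check that no unweighted lower-order term $C\int_M |f_t|^2$ survives, and that the leading coefficient stays at $1$.

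\textbf{Step 1 (setup).} Take $\partial_t$ of the first equation of \eqref{eq1} in the form $e^{4u}f_t=-\Delta^2 f+B$, as in \eqref{tau_t}. Pair with $f_t$ and integrate over $M$. Using the equation for $u$, the left side is
\[
\frac{d}{dt}\frac12\int_M e^{4u}|f_t|^2+2b\int_M|f_t|^2(|\nabla df|^2+|df|^4)-2a\int_M e^{4u}|f_t|^2 .
\]
On the right, term $I$ is now exactly $-\int_M|\Delta f_t|^2$. After multiplying the whole identity by $2$, the good term is $-2\int_M|\Delta f_t|^2$. This leaves a full $\int_M|\Delta f_t|^2$ of room to absorb errors of size $\delta\int_M|\Delta f_t|^2$ and still reach coefficient $1$ in the statement.

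\textbf{Step 2 (error terms).} For $II,III,IV$ I would follow the same algebra as in \Cref{Der p=0}, including the two extra integrations by parts that remove $\int|df_t|^2|\nabla df|$ and $\int|\Delta f||df_t|^2$. All boundary-type terms disappear since $\varphi\equiv1$. The products $\int|df|^2|df_t|^2$ are handled by \eqref{df^2 df_t^2} with $\varphi\equiv1$, which yields $\delta\int|\Delta f_t|^2+C\int|f_t|^2(|df|^4+|\nabla df|^2)$ with no cutoff remainder. Terms $\int|f_t|^2|\Delta f|^2$ are bounded pointwise by $|f_t|^2|\nabla df|^2$. Everything of the form $C\int|f_t|^2(|\nabla df|^2+|df|^4)$ is absorbed into $4b\int|f_t|^2(|\nabla df|^2+|df|^4)$, which defines $C_b=4b-C_1$ as in the local version.

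\textbf{Step 3 (main obstacle).} The difficult terms are those containing $\nabla df_t$, namely $\int|\nabla df_t||df_t||df|$ and the $\delta\int|\nabla df_t|^2$ arising in $II$. In the local proof these were converted using \Cref{Int by parts general}, which produces a Ricci term $C\int|df_t|^2$ and hence, after one more integration by parts, a bad unweighted $C\int|f_t|^2$; this is precisely the term the statement does not allow. I would first try to avoid ever creating $\nabla df_t$ by exploiting tangency. Since $f_t\in T_fN$ and $A(\cdot,\cdot)\perp T_fN$, we have $\langle A(X,Y),f_t\rangle\equiv0$. Differentiating this identity moves the derivative off $f_t$:
\[
\langle A(X,Y),\nabla_i f_t\rangle=-\langle \nabla_i[A(X,Y)],f_t\rangle .
\]
Applying this to the pairings of $A(\nabla_i df_t,df)$ and $A(df_t,\nabla_i df)$ with $\nabla_i f_t$ leaves only integrals of the form $\int\langle \nabla^2 f_t\ast\text{stuff},f_t\rangle$. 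One further integration by parts, arranged so that the second derivatives contract to $\Delta f_t$, then gives bounds by $\delta\int|\Delta f_t|^2+C\int|f_t|^2(|\nabla df|^2+|df|^4)+C\int|df|^2|df_t|^2$. The last integral is again controlled by \eqref{df^2 df_t^2}.

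If some full-Hessian contraction $\nabla_i\nabla_j f_t$ cannot be rewritten as a trace in this way, the commutator \eqref{com 5} contributes exactly $Ric\ast df_t\ast(\cdots)$. Because of the tangency factor, such a term carries an additional factor $|df|$ or $|\nabla df|$, and is therefore again of the absorbable type $|f_t|^2|\nabla df|^2$ or $|df|^2|df_t|^2$. Checking this bookkeeping term by term is where I expect the real work to lie. Once it is done, choosing $\delta$ small and $b$ large gives the stated inequality.
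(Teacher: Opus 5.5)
Your Step~1 is fine, and so is your handling in Step~2 of the $|df|^2|df_t|^2$ and $|f_t|^2|\Delta f|^2$ terms. Your diagnosis in Step~3 is also correct. The paper's own proof, which only says the argument is ``almost the same as \Cref{Der p=0}'', does not address it: with $\varphi\equiv1$ the local estimate still carries the $\varphi^4$ part of its last integral, i.e.\ $C\int_M|f_t|^2$, which comes (through $\int_M|df_t|^2$) from the Ricci term in \eqref{nabla sq f_t 0}.

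The gap is that your remedy is announced rather than carried out, and as described it misses the hardest term.
\begin{itemize}
\item Your identity $\langle A(X,Y),\nabla_i f_t\rangle=-\langle\nabla_i[A(X,Y)],f_t\rangle$ only removes a derivative from a \emph{first} derivative of $f_t$, and you apply it only to the pairings coming from $II$.
\item The extra integration by parts you take over from the paper, to remove $\int|\Delta f||df_t|^2$ in $IV$, produces $\langle A(\nabla_i f,df_t),\nabla_i df_t\rangle$. Here $A$ is paired with the Hessian of $f_t$, so your identity does not apply. Integrating by parts once more only trades it back for $\int|\Delta f||df_t|^2$ or $\int|\nabla df||df_t|^2$.
\item Your fallback is not correct. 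If any $\delta\int_M|\nabla df_t|^2$ survives, Bochner turns it into $\delta\int_M|\Delta f_t|^2-\delta\int_M Ric(df_t,df_t)$. That Ricci term carries no extra factor $|df|$ or $|\nabla df|$, so the unweighted $\int_M|f_t|^2$ comes straight back.
\item The identity $\langle A(X,Y),f_t\rangle=0$ fails for non-tangent $X$ such as $\nabla_i df_t$ when $A=-\nabla^2\Pi$, since that extension sends tangent$\times$normal to tangent vectors. You would first have to fix a normal-valued extension such as $A(PX,PY)$ and redo the expansion of $[A(df,df)]_t$.
\end{itemize}

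What is missing is to use tangency before expanding, so that no Hessian of $f_t$ is ever created. Since $Pf_t=f_t$ and $B\perp T_fN$, the equation gives $e^{4u}f_t=-P\Delta^2 f$ and $(I-P)\Delta^2 f=B$. Use that $P$ is symmetric and that $\langle dP(v)w,X\rangle=\langle A(v,X),(I-P)w\rangle$ for tangent $v,X$, where $A(v,X)=dP(v)X$. This gives
\[
\int_M\langle (e^{4u}f_t)_t,f_t\rangle=-\int_M|\Delta f_t|^2-\int_M\langle A(f_t,f_t),B\rangle ,
\]
with no curvature of $M$ involved. Now integrate by parts against the divergence structure of $B$:
\begin{itemize}
\item $\Delta[A(f_t,f_t)]$ meets $A(df,df)=O(|df|^2)$;
\item $\nabla[A(f_t,f_t)]$ meets $\langle\Delta f,\nabla P\rangle=O(|\Delta f||df|)$;
\item $A(f_t,f_t)$ itself meets $\langle\Delta f,\Delta P\rangle$.
\end{itemize}
We have $|\Delta[A(f_t,f_t)]|\le C(|\Delta f_t||f_t|+|df_t|^2+|df||df_t||f_t|+(|\Delta f|+|df|^2)|f_t|^2)$ and $|\nabla[A(f_t,f_t)]|\le C(|df_t||f_t|+|df||f_t|^2)$. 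So every term is bounded by $\delta|\Delta f_t|^2+C|df|^2|df_t|^2+C|f_t|^2(|\nabla df|^2+|df|^4)$. Then \eqref{df^2 df_t^2} with $\varphi\equiv1$ finishes as in your Step~2.

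If you prefer to keep the term-by-term scheme, two changes suffice.
\begin{itemize}
\item With $\varphi\equiv1$, write $II=\int_M\langle[A(df,df)]_t,\Delta f_t\rangle$, which contains no Hessian of $f_t$.
\item In $IV$, skip the extra integration by parts. The bad term is $\langle dP(\nabla_i f_t)\Delta f,\nabla_i f_t\rangle$. The normal part of $\nabla_i f_t$ is $O(|df||f_t|)$ because $(I-P)f_t=0$. The tangential part contributes $\langle A(P\nabla_i f_t,P\nabla_i f_t),(I-P)\Delta f\rangle=O(|df|^2|df_t|^2)$, since $(I-P)\Delta f=O(|df|^2)$.
\end{itemize}
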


Its proof is almost the same as \Cref{Der p=0}.
As a result of \Cref{Der p=0 glob} and \Cref{E dec}, we have
\[
\int_{M} e^{4u}|f_t|^2  (t_2) - \int_{M} e^{4u}|f_t|^2 (t_1) \leq 4a \int_{t_1}^{t_2} \int_{M} e^{4u}|f_t|^2 = 4a (E(t_1) - E(t_2))
\]
hence for any $t \geq 0$,
\begin{equation} \label{int f_t^2 bound}
\int_{M} e^{4u}|f_t|^2 (t) \leq 4a E(0).
\end{equation}

\begin{theorem} \label{finitely many}
Let $(f,u)$ be a smooth solution of \eqref{eq1} on $M \times [0,T_0)$.
Assume $T_0<\infty$ is the maximal existence time.
Then there exists at most finitely many points $x_1, \cdots, x_k$ such that for all $i=1, \cdots, k$,
\begin{equation} \label{finite time sing criterion}
\lim_{r \to 0} \limsup_{t \nearrow T_0} \int_{B_r(x_i)} |\nabla df|^2 + |df|^4 > \ep_1.
\end{equation}
\end{theorem}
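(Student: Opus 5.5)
The plan is a standard covering argument driven by the global bound on the quantity $\int_M |\nabla df|^2 + |df|^4$, in the spirit of Struwe's argument for harmonic map flow. First, record a uniform global bound. Integration by parts gives $\int_M |\nabla df|^2 \le \int_M |\Delta f|^2 + C\int_M |df|^2$. By \eqref{df4 bdd} and \Cref{E dec}, for all $t\in[0,T_0)$,
\[
\int_M \big(|\nabla df|^2 + |df|^4\big)(t) \le 2E(0) + 4C_1E(0)^2 + C_2 + C =: K,
\]
where $C$ absorbs $\int_M|df|^2 \le C(\int_M |df|^4)^{1/2}\mathrm{Vol}(M)^{1/2}$. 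The point is that $K$ depends only on $E(0)$, $M$ and $N$, and not on $t$.

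Next, I claim that there can be at most $k \le K/\ep_1$ points satisfying \eqref{finite time sing criterion}. Suppose instead that $x_1,\dots,x_m$ are distinct points satisfying it, with $m > K/\ep_1$. Choose $r_0>0$ so small that the balls $B_{r_0}(x_i)$ are pairwise disjoint. The quantity $\limsup_{t\nearrow T_0}\int_{B_r(x_i)}$ is nonincreasing in $r$, so its limit as $r\to 0$ is an infimum. Hence for every $r\le r_0$ and every $i$ we have $\limsup_{t\nearrow T_0}\int_{B_r(x_i)}(|\nabla df|^2+|df|^4) > \ep_1$. This holds in particular for $r=r_0$.

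The hard step is that the limsups for different $i$ may be attained along different time sequences, so we cannot simply sum them at one time. I would get around this as follows. Set $\eta = (m\ep_1 - K)/m > 0$ and fix any time $s<T_0$. The limsup condition only guarantees, for each $i$ separately, some $t_i \ge s$ with $\int_{B_{r_0}(x_i)}(t_i) > \ep_1$, which is not enough to contradict $\sum_i \int_{B_{r_0}(x_i)} \le K$. I would therefore use a continuity-in-time estimate, via \Cref{loc E lem} or \Cref{loc E finer}, which controls how much local energy can change on a short interval. In an alternative route, the bound \eqref{int f_t^2 bound} together with the derivative estimate would give the needed control of $\int |\Delta f|^2$ near $T_0$. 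I expect this continuity issue to be the main obstacle.

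A cleaner way to avoid it is to count points rather than times. For each $i$ there are times $t$ arbitrarily close to $T_0$ at which $B_{r_0}(x_i)$ carries energy at least $\ep_1$. Finiteness can then be shown by contradiction: infinitely many such points would have an accumulation point $x$. Then for every $r$, the ball $B_r(x)$ contains infinitely many $x_i$, and one must transfer the concentration at these $x_i$ into a bound on $\int_{B_r(x)}$ exceeding $K$ at a single time. Here I would rely on the local energy estimate \Cref{loc E finer} on $[T_0-\tau,T_0)$: for small $\tau$ it bounds $\sup_t \int_{B_{r}}|\Delta f|^2$ uniformly by $K_1(T_0-\tau)$ on $B_{2r}$. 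Combined with \Cref{df^4 lem} and \Cref{nabla df lem}, this makes the concentration quantity on small balls comparable at all times in $[T_0-\tau,T_0)$ up to errors of order $\ep_1/2$. With this comparability, the disjoint-balls summation at a single time $t$ close to $T_0$ gives $m\ep_1/2 \le K$, and hence $k \le 2K/\ep_1$.
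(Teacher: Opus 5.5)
Your overall shape is the right one: move the concentration at each $x_i$ to one common time, then sum over disjoint balls against a global bound. The gap is the step that does the moving, which the results you cite do not support.

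You claim that \Cref{loc E finer}, together with \Cref{df^4 lem} and \Cref{nabla df lem}, makes $\int_{B_r}(|\nabla df|^2+|df|^4)$ comparable at all times near $T_0$. None of them gives that:
\begin{itemize}
\item \Cref{df^4 lem} and \Cref{nabla df lem} are one-directional. Smallness of $\int_{B_R}|\Delta f|^2$ on the whole time interval forces smallness of $\int_{B_r}(|\nabla df|^2+|df|^4)$ for small $r$, and nothing more.
\item The paper has no local estimate propagating $|\nabla df|^2+|df|^4$ in time, precisely because $|\Delta f|^2$ does not control it locally.
\item \Cref{loc E finer} propagates only the combination $\int e^{4u}|f_t|^2+4a\int|\Delta f|^2$, and only as an upper bound at later times in terms of earlier ones.
\end{itemize}

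What your ingredients actually yield is the following. By contraposition of \eqref{df^4 nabla df^2 ep} (a step your proposal omits), for each $i$ there is a time $t_i\in[T_0',T_0)$ with $\int_{B_R(x_i)}|\Delta f|^2(t_i)>s\ep_1/2$, for a fixed $s>0$ depending on $C_8$ and $\ep_1$. Then \Cref{loc E finer} on $[T_0',t_i]$, with $T_0-T_0'$ small depending on $R$, gives at the earlier common time $T_0'$
\[
\int_{B_{2R}(x_i)} e^{4u}|f_t|^2(T_0') + 4a\int_{B_{2R}(x_i)}|\Delta f|^2(T_0') \geq \frac{as\ep_1}{C_{16}}.
\]
This lower bound may sit entirely in the $e^{4u}|f_t|^2$ term, which your $K$ does not control. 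So $m\ep_1/2\le K$ does not follow.

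The missing ingredient is the global bound \eqref{int f_t^2 bound}, $\int_M e^{4u}|f_t|^2(t)\le 4aE(0)$, which comes from \Cref{Der p=0 glob} and \Cref{E dec}. The paper sums the display above over the disjoint balls $B_{2R}(x_i)$. It then uses \eqref{int f_t^2 bound} together with $\int_M|\Delta f|^2(T_0')\le 2E(0)$ to bound $k$. The choice of $T_0'$ also keeps $C_{16}$ bounded independently of $R$; this matters, because $R$ depends on the chosen points.

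Alternatively, the route you only gestured at can be made to work:
\begin{itemize}
\item \Cref{Der p=0 glob} and \eqref{int f_t^2 bound} bound $\int_{T_0'}^{T_0}\int_M|\Delta f_t|^2$.
\item Then $\frac{d}{dt}\int|\Delta f|^2\varphi^4=2\int\langle\Delta f_t,\Delta f\rangle\varphi^4$ makes $\int|\Delta f|^2\varphi^4$ H\"older-$\frac12$ in time, with a constant independent of $R$.
\item So the lower bound at $t_i$ transfers to $T_0'$ for $|\Delta f|^2$ alone, and can be summed against $2E(0)$.
\end{itemize}

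In both versions the quantity carried across time is $|\Delta f|^2$, obtained after the contraposition step, not $|\nabla df|^2+|df|^4$. The accumulation-point detour adds nothing, since you still have to pass to finitely many points with disjoint balls. The auxiliary $\eta$ is never used.
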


\begin{proof}
We first show that if for $x \in M$,
\begin{equation} \label{small E}
\lim_{r \to 0} \limsup_{t \nearrow T_0} \int_{B_r(x)} |\nabla df|^2 + |df|^4 \leq \ep_1,
\end{equation}
then $f$ and $u$ is smooth at $(x,T_0)$.

From above assumption, we may assume that for some fixed $t_1$ with $0 < t_1 < T_0$, for all $r>0$ small enough, $\sup_{[t_1,T_0]} \int_{B_r(x)}  |\nabla df|^2 + |df|^4 (t) \leq \ep_1$.
%Since $f$ is smooth at $t_1$, we choose $0 < r_1 $ such that $\int_{B_{r_1}(x)}  |\nabla df|^2 + |df|^4(t_1) \leq \ep_1$.
Also, since $u$ is smooth at $t_1$, $V_u(t_1) = \sup_{B_r} u(x,t_1) < \infty$.
Then by \Cref{loc smooth}, we obtain that $f$ is smooth and $u$ is also smooth at $(x,T_0)$.

Next, assume that for all $x \in M$, \eqref{small E} holds.
Then $f,u$ are smooth on $M \times \{T_0\}$, hence by \Cref{short time existence}, the solution $(t,u)$ exists on $M \times [0,T_0+\delta]$ for some $\delta>0$.
This conflicts with the assumption that $T_0$ is the maximal existence time.
Therefore, there should be $x \in M$ such that for any $r>0$,
\begin{equation} \label{large E}
\limsup_{t \nearrow T_0} \int_{B_r(x)} |\nabla df|^2 + |df|^4 > \ep_1.
\end{equation}

We show that there are at most finitely many such points.
Let $x_1, \cdots, x_k$ be any finite collection of such points.
Fix $R>0$ so that $B_{2R}(x_i)$ are disjoint.
%Let $T_0' = T_0 - \frac{\bar{\ep}}{4 C_3 \left( 1 + \frac{1}{R^n} \right)^2}$.
Let $T_0'<T_0$ and let $0 < r \leq R$.
%Here without loss of generality, we may assume $\frac{r}{R} < \ep$, where $\alpha = \alpha(T_2,R)$ is from \Cref{df^4 lem}.
%Choose $t_i$ such that $T_0' \leq t_i \leq T_0$ and
%\[
% \int_{B_r(x_i)} |\nabla df|^2 + |df|^4 (t_i) > \frac{\ep_1}{2}.
%\]

We first claim that
\begin{equation}
\sup_{T_0' \leq t \leq T_0} \int_{B_R(x_i)} |\Delta f|^2 (t)  >  s \ep_1
\end{equation}
where $s>0$ is chosen such that
\[
s \leq \min \{ \ep_1^{-1}, (8C_8)^{-1} \}.
\]
Suppose $\sup_{T_0' \leq t \leq T_0} \int_{B_R(x_i)} |\Delta f|^2 (t) \leq  s \ep_1$.
By \eqref{df^4 nabla df^2 ep}, for $t$ close enough to $T_0$,
\[
\begin{split}
\frac{\ep_1}{2} < \int_{B_r(x_i)} |\nabla df|^2 + |df|^4 (t) \leq&  C_8 (s\ep_1)^2 + C_8 s\ep_1 \leq 2C_8 s \ep_1 \leq \frac{\ep_1}{4}
\end{split}
\]
which is impossible, hence verify the claim.
Note that because of \eqref{large E}, we can also say that
\begin{equation} \label{finite time sing criterion 2}
\limsup_{t \nearrow T_0} \int_{B_R(x_i)} |\Delta f|^2   >  s \ep_1.
\end{equation}
Note that above condition is independent on $R$.
%Let $T_0' = T_0 - \frac{\bar{\ep}}{4 C_3 \left( 1 + \frac{1}{R^n} \right)^2}$.
Let $T_0'<T_0$ to be determined later. 
Choose $t_i \in [T_0',T_0]$ such that
\[
 \int_{B_R(x_i)} |\Delta f|^2 (t_i) > \frac{s \ep_1}{2}.
\]

From \Cref{loc E finer}, and using  on $B_{r}(x_i)$, we have
\[
\begin{split}
4a \frac{s \ep_1}{2} <& 4a \int_{B_R(x_i)} |\Delta f|^2 (t_i)\\
 \leq& 4a K^i(T_0') +C (T_0-T_0') e^{4aT_0} (1 + \frac{1}{R^4})  \left( K^i(T_0') + C e^{4aT_0} (1 + \frac{1}{R^4}) (1+T_0) E(0) \right) \\
 \leq& \left(4a + C(T_0-T_0') e^{4aT_0} (1 + \frac{1}{R^4}) \right) K^i(T_0') + C(T_0-T_0') e^{8aT_0} (1 + \frac{1}{R^4})^2 (1+T_0)E(0)
 \end{split}
 \]
 where
 \[
 K^i(T_0') = \int_{B_{2R}(x_i)} e^{4u}|f_t|^2 (T_0') + 4a \int_{B_{2R}(x_i)} |\Delta f|^2 (T_0').
 \]
 Now we choose $T_0'$ such that
 \[
 C(T_0-T_0') e^{8aT_0} (1 + \frac{1}{R^4})^2 (1+T_0)E(0) \leq 2a \frac{s \ep_1}{2}.
 \]
 This implies
 \[
 as \ep_1  < \left(4a + C(T_0-T_0') e^{4aT_0} (1 + \frac{1}{R^4}) \right) \left(  \int_{B_{2R}(x_i)} e^{4u}|f_t|^2 (T_0') + 4a \int_{B_{2R}(x_i)} |\Delta f|^2 (T_0') \right).
\]
Denote $C_{16} = \left(4a + C(T_0-T_0') e^{4aT_0} (1 + \frac{1}{R^4}) \right)$.
Then we finally have
\[
\begin{split}
 as \ep_1 <& C_{16} \int_{B_{2R}(x_i)} e^{4u}|f_t|^2 (T_0')  + 4a C_{16} \int_{B_{2R}(x_i)} |\Delta f|^2 (T_0') \\
k as \ep_1 <& C_{16} \int_{M} e^{4u}|f_t|^2 (T_0') + 4a C_{16}  \int_{M}  |\Delta f|^2 (T_0'). 
\end{split}
\]
From \Cref{E dec}, $\int_{M} |\Delta f|^2 (T_0') \leq E(0)$.
Also, from \eqref{int f_t^2 bound}, $\int_{M} e^{4u}|f_t|^2 (T_0') \leq 4a E(0)$.
Hence RHS is finite, which implies $k$ is bounded from above.
This completes the proof.
\end{proof}

Next, we show that actually there is no such finite time singularity.

\begin{proof}
(Proof of \Cref{main 1})
From \Cref{sec6}, there exists $T_0>0$ such that the smooth solution $(f,u)$ of \eqref{eq1} exists on $M \times [0,T_0)$.
Assume $(x_0,T)$ be a finite time singularity and fix $B_R(x_0)$ be a ball centered at $x_0$.
The condition \eqref{finite time sing criterion 2} implies
\[
\limsup_{t \nearrow T} \int_{B_R(x_0)} |\Delta f|^2 (t) = K + \int_{B_R(x_0)} |\Delta f|^2 (T)
\]
for some positive constant $K > s \ep_1 > 0$, measuring the energy loss at $(x_0,T)$.
Equivalently, since \eqref{finite time sing criterion 2} is independent on $R$, $K$ can be described by
\[
K = \lim_{r \to 0} \limsup_{t \nearrow T} \int_{B_r(x_0)} |\Delta f|^2 (t).
\]

Let $0 < r \leq R$ and $\varphi$ be a cut-off function on $B_r(x_0)$.
Define the local energy
\begin{equation}
\Theta_r(t) = \int_{B_r(x_0)} |\Delta f|^2 \varphi^4.
\end{equation}
Then
\[
\begin{split}
\frac{\Theta_r(t)}{dt} =  \int_{B_r(x_0)} \varphi^4 \langle \Delta f_t, \Delta f \rangle \leq& \left( \int_{B_r(x_0)} |\Delta f_t|^2 \varphi^4 \right)^{\frac{1}{2}} \left( \int_{B_r(x_0)} |\Delta f|^2 \varphi^4 \right)^{\frac{1}{2}}\\
\leq& E(0)^{\frac{1}{2}} \left( \int_{B_r(x_0)}  |\Delta f_t|^2 \right)^{\frac{1}{2}}.
\end{split}
\]
So, for $s \leq t < T$, by \Cref{delta f_t lem} with $\ep=1$,
\begin{equation}
\begin{split}
\Theta_r(t) - \Theta_r(s) =& E(0)^{\frac{1}{2}} \int_{s}^{t} \left( \int_{B_r(x_0)}  |\Delta f_t|^2 \right)^{\frac{1}{2}}\\
\leq& E(0)^{\frac{1}{2}} (t-s)^{\frac{1}{2}} \left( \int_{s}^{t} \int_{B_R(x_0)} |\Delta f_t|^2 \right)^{\frac{1}{2}}\\
\leq& C_{17} (t-s)^{\frac{1}{2}}
\end{split}
\end{equation}
where the constant $C_{17}$ only depends on $E(0), T, R$.

Hence we can take the limit $\lim_{t \nearrow T} \Theta_r(t)$ and have
\[
K = \lim_{t \nearrow T} \int_{B_r(x_0)} |\Delta f|^2 (t) - \int_{B_r(x_0)} |\Delta f|^2 (T) = \lim_{t \nearrow T} \Theta_r (t) - \Theta_r(T).
\]
Combining above two inequality gives
\begin{equation} \label{Theta est}
|K + \Theta_r(T) - \Theta_r(s)| \leq C_{17} (T-s)^{\frac{1}{2}}.
\end{equation}
Now, fix $\delta >0$ such that
\begin{equation}
\delta = \min\{ \frac{1}{C_{17}}, \frac{K}{4}\}.
\end{equation}
Also fix $s = T - \delta^4$ and choose $r$ small enough so that $\Theta_r(T) < \frac{K}{4}$ and $\Theta_r(s) < \frac{K}{4}$.
Then from \eqref{Theta est} we have
\[
\frac{K}{2} \leq C_{17} (T-s)^{\frac{1}{2}} \leq C_{17} \delta^2 < \delta \leq \frac{K}{4}
\]
which is a contradiction.
Hence there is no finite time singularity.
Then by bootstrapping argument for uniformly parabolic equation, we obtain smooth solution $(f,u)$ of \eqref{eq1} and we complete the proof of \Cref{main 1}.
\end{proof}

%\section*{Acknowledgments}

%The author would like to thank to Thomas Parker for valuable comments and simplifications of the argument.
%The author thanks to the anonymous referee for his careful reading and crucial suggestions to the previous version of this paper.

%This is for bibliography example.
%\cite{HSM29}.
%%The order to proceed is
% 1. pdfLaTex
% 2. BibTex
% 3. pdfLaTex
% 4. pdfLaTex

\bibliographystyle{abbrv}
\bibliography{bib}

\end{document}